\numberwithin{equation}{section}
\newcommand{\rHom}{\mathrm{RHom}}
\newcommand{\BDC}{{\mathbf{D}}^{\mathrm{b}}}
\newcommand{\Mod}{\mathrm{Mod}}
\newcommand{\rsect}{{\mathrm{R}}\Gamma}
\newcommand{\CC}{\mathbb{C}}
\newcommand{\RR}{\mathbb{R}}
\newcommand{\QQ}{\mathbb{Q}}
\newcommand{\ZZ}{\mathbb{Z}}
\newcommand{\M}{\mathcal{M}}
\renewcommand{\SS}{\mathcal{S}}
\newcommand{\PP}{{\mathbb P}}
\newcommand{\Cal}{\cal}
\newcommand{\an}{{\rm an}}
\newcommand{\e}{\varepsilon}
\newcommand{\id}{{\rm id}}
\newcommand{\Sol}{{\rm Sol}}
\newcommand{\Dbc}{{\bf D}_{c}^{b}}
\newcommand{\Kbc}{{\bf K}_{c}^{b}}
\newcommand{\tl}[1]{\widetilde{#1}}
\newcommand{\uotimes}[1]{\otimes_{#1}}
\newcommand{\simto}{\overset{\sim}{\longrightarrow}}
\newcommand{\dsum}{\displaystyle \sum}
\newcommand{\CF}{{\rm CF}}
\newcommand{\op}{\mbox{\scriptsize op}}
\newcommand{\SD}{\mathcal{D}}
\newcommand{\SDop}{\mathcal{D}^{\mbox{\scriptsize op}}}
\newcommand{\SO}{\mathcal{O}}
\newcommand{\SA}{\mathcal{A}}
\newcommand{\SM}{\mathcal{M}}
\newcommand{\SN}{\mathcal{N}}
\newcommand{\SE}{\mathcal{E}}
\newcommand{\SF}{\mathcal{F}}
\newcommand{\Modhol}{\mathrm{Mod}_{\mathrm{hol}}}
\newcommand{\BDCcoh}{{\mathbf{D}}^{\mathrm{b}}_{\mbox{\scriptsize coh}}}
\newcommand{\BDChol}{{\mathbf{D}}^{\mathrm{b}}_{\mathrm{hol}}}
\newcommand{\BDCrh}{{\mathbf{D}}^{\mathrm{b}}_{\mbox{\scriptsize rh}}}
\newcommand{\DD}{\mathbb{D}}
\newcommand{\Lotimes}[1]{\overset{L}{\otimes}_{#1}}
\newcommand{\Dotimes}{\overset{D}{\otimes}}
\newcommand{\Potimes}{\overset{+}{\otimes}}
\newcommand{\rhom}{{\rm R}{\mathcal{H}}om}
\newcommand{\rihom}{{\rm R}{\mathcal{I}}hom}
\newcommand{\Prihom}{{\rm R}{\mathcal{I}}hom^+}
\newcommand{\Prhom}{\rhom^+}
\newcommand{\I}{{\rm I}}
\newcommand{\che}[1]{\overset{\vee}{#1}}
\newcommand{\var}[1]{\overline{#1}}
\newcommand{\BEC}{{\mathbf{E}}^{\mathrm{b}}}
\newcommand{\Q}{\mathbf{Q}}
\newcommand{\EE}{\mathbb{E}}
\newcommand{\T}{{\rm T}}
\newcommand{\bfR}{\mathbf{R}}
\newcommand{\bfL}{\mathbf{L}}
\newcommand{\bfD}{\mathbf{D}}
\newcommand{\rmR}{{\mathrm{R}}}
\newcommand{\rmE}{{\mathrm{E}}}
\newcommand{\rmD}{{\mathrm{D}}}
\newcommand{\rmt}{{\mathrm{t}}}
\newcommand{\bfE}{\mathbf{E}}
\renewcommand{\Re}{\operatorname{Re}}
\newcommand{\SP}{\mathcal{P}}
\newcommand{\rmI}{\mathrm{I}}
\newcommand{\pt}{\mathrm{pt}}
\newcommand{\dR}{\mathrm{dR}}
\newcommand{\rmb}{\mathrm{b}}
\newcommand{\reg}{\mathrm{reg}}
\newcommand{\irr}{\mathrm{irr}}
\newcommand{\red}{\mathrm{red}}
\newcommand{\DRalg}{DR^{\mathrm{alg}}}
\newcommand{\ssupp}{\operatorname{sing}\!.\!\operatorname{supp}} 
\DeclareMathOperator{\supp}{supp}
\DeclareMathOperator{\rank}{rank}
\DeclareMathOperator{\CCyc}{CC} 
\DeclareMathOperator{\Eu}{Eu} 
\newcommand{\rcEC}{\mathbf{E}_{\mathbb{R}{\text -}\mathrm c}}
\newcommand{\CCirr}{\mathrm{CC}_{\mathrm{irr}}} 
\newcommand{\bs}{\left.\right\backslash} 
\newcommand{\vbar}{\left.\right|} 
\DeclarePairedDelimiter{\abs}{\lvert}{\rvert} 
\DeclarePairedDelimiterX{\Set}[2]{\lbrace}{\rbrace}{#1\ \delimsize\vert\ #2}
\newtheorem{theorem}{Theorem}[section]
\newtheorem{corollary}[theorem]{Corollary}
\newtheorem{lemma}[theorem]{Lemma}
\newtheorem{proposition}[theorem]{Proposition}
\theoremstyle{definition}
\newtheorem{definition}[theorem]{Definition}
\theoremstyle{remark}
\newtheorem{remark}[theorem]{\sc Remark}
\newtheorem{example}[theorem]{\sc Example}
\title{On characteristic cycles of
irregular holonomic {$\SD$}-modules 
\footnote{{\bf 2010 Mathematics Subject Classification:
}32C38, 32S40, 34M35, 34M40, 35A27.}
}
\author{Kazuki KUDOMI
\footnote{Mathematical Institute, Tohoku University,
Aramaki Aza-Aoba 6-3, Aobaku, Sendai, 980-8578, Japan.
E-mail: kazuki.kudomi.q3@dc.tohoku.ac.jp}
and Kiyoshi TAKEUCHI
\footnote{Mathematical Institute, Tohoku University,
Aramaki Aza-Aoba 6-3, Aobaku, Sendai, 980-8578, Japan.
E-mail: takemicro@nifty.com} }
\begin{document}

\maketitle

\begin{abstract}
Based on the recent progress in the irregular 
Riemann-Hilbert correspondence for holonomic D-modules, 
we show that the characteristic cycles of some standard 
irregular holonomic D-modules can be expressed 
as in the classical theorem of Ginsburg. For this purpose, 
we first prove a formula for the enhanced solution complexes of 
holonomic D-modules having a quasi-normal form, via 
which, to our surprise, their solution complexes can be 
calculated more easily by topological methods. 
In the formulation and the proof of our main theorems, 
not necessarily homogeneous Lagrangian cycles 
that we call irregular characteristic cycles 
will play a crucial role. 
\end{abstract}

\section{Introduction}\label{sec:1}

\indent In the theory of D-modules, the most basic 
and important objects we study are their solution complexes 
i.e. the complexes of their holomorphic solutions. 
Indeed, in the classical Riemann-Hilbert correspondence, 
they were used to establish an equivalence between the categories 
of regular holonomic D-modules and perverse sheaves. Moreover, 
by the solution complexes of holonomic D-modules, 
we obtain their most important geometric invariants i.e. 
their characteristic cycles. See e.g. Kashiwara-Schapira 
\cite[Chapters X and XI]{KS90}. However for irregular 
holonomic D-modules, we know only very little about their 
solution complexes. 

\medskip 
\indent The aim of this paper is to study the solution complexes 
and the characteristic cycles of irregular 
holonomic D-modules in the light of the irregular 
Riemann-Hilbert correspondence established by 
D'Agnolo and Kashiwara in \cite{DK16}. To our surprise, 
the solution complexes are sometimes calculated more easily 
by topological methods via the enhanced ones introduced in 
\cite{DK16} (see Proposition \ref{prop-1} and 
Corollary \ref{cor-1}). Taking this advantage, for some standard 
holonomic D-modules, we define (not necessarily homogeneous) 
Lagrangian cycles that we call irregular characteristic cycles 
and use them to obtain Ginsburg type 
formulas for their (usual) characteristic cycles 
similar to the one in Ginsburg 
\cite[Theorem 3.3]{Gin86}. 

\medskip 
\indent In order to explain our results more precisely, 
let $X$ be a complex manifold and consider a holonomic 
$\SD_X$-module $\SM$ on it. Then by Kashiwara's 
constructibility theorem its solution complex 
\begin{equation}
Sol_X ( \SM ):= \rhom_{\SD_X}(\SM, \SO_X) \qquad 
\in \BDC(\CC_X)
\end{equation}
is constructible. In fact, Kashiwara also proved that 
it is a perverse sheaf on $X$ up to some shift. First,  
for simplicity, let us consider the case where $\SM$ is a 
meromorphic connection on $X$ along a closed hypersurface 
$D \subset X$. In this case, if moreover $\SM$ is 
regular then after some fundamental works by 
Deligne \cite{Del70} and Kashiwara-Kawai \cite{KK81} 
we know that for the inclusion map $j: X \setminus D 
\hookrightarrow X$ there exists an isomorphism 
\begin{equation}
Sol_X ( \SM ) \simeq j_!j^{-1} Sol_X ( \SM )
\end{equation}
and hence $Sol_X ( \SM )|_D \simeq 0$. Namely 
for a regular meromorphic connection $\SM$ 
noting interesting can happen over the 
hypersurface $D \subset X$. But for an irregular 
meromorphic connection $\SM$ this does not hold 
in general. Indeed, if the dimension $\dim X$ of $X$ 
is one and $D$ is a point i.e. $D= \{ x \}$ for 
some $x \in X$, the local Euler-Poincar{\'e} index 
\begin{equation}
\chi (Sol_X ( \SM ))(x):= \dsum_{j \in \ZZ} (-1)^j 
\dim H^j Sol_X ( \SM )_x \qquad \in \ZZ
\end{equation}
of the solution complex $Sol_X ( \SM )$ at 
the point $x \in X$ is equal to the minus of the 
irregularity of $\SM$, which is equal to zero 
if and only if $\SM$ is regular (see 
e.g. Sabbah \cite{Sab93} for an introduction 
to this subject). Now, to introduce 
our results in higher dimensions, we first consider 
the case where $D \subset X$ is a normal crossing divisor 
in $X$ and the meromorphic connection $\SM$ has 
a quasi-normal form along it in the 
sense of Mochizuki \cite[Chapter 5]{Moc11}. In this case, 
in Proposition \ref{hda-dk} for any point 
$x \in D$ we obtain a formula for the enhanced 
solution complex $Sol_X^\rmE ( \SM )$ of $\SM$ 
over a neighborhood $U \subset X$ of $x$ in $X$. 
This is a higher-dimensional analogue of 
the result of \cite[Proposition 5.4.5]{DK18}. 
By the proof of Proposition \ref{hda-dk}, 
we see also that the solution complex $Sol_X ( \SM )$ 
can be calculated by topological methods 
via the enhanced one $Sol_X^\rmE ( \SM )$. For 
such calculations, see Proposition \ref{prop-1} and 
Corollary \ref{cor-1}. With Proposition \ref{hda-dk} 
and its proof at hands, we define a (not necessarily homogeneous) 
Lagrangian cycle $\CCirr(\SM)$ in the open subset 
$T^\ast U\subset T^\ast X$ that we call the irregular characteristic cycle of 
the meromorphic connection $\SM$ 
as in \cite{Tak22} and \cite{KT23} and use them to prove 
the following Ginsburg type formula for the (usual) 
characteristic cycle $\CCyc(\SM)$ of $\SM$. We shall say 
that a holomorphic function $g\colon U\longrightarrow\CC$ on $U$ is a 
defining holomorphic function of the divisor $D\cap U\subset U$ 
if we have $g^{-1}(0)= D\cap U$ set-theoretically. 

\begin{theorem}\label{intro-thm-5-2-1}
In the situation as above, let $g\colon U\longrightarrow\CC$ 
be a defining holomorphic 
function of the normal crossing divisor $D\cap U\subset U$. 
Then in the open subset $T^\ast U\subset T^\ast X$ we have 
\begin{equation}
\CCyc(\SM)=\lim_{t\rightarrow+0}
t\Bigl\{\CCirr(\SM)+d\log g\Bigr\},
\end{equation}
where the limit in the right hand side stands for that of Lagrangian cycles
(see \cite[Section 2.2]{FKT26}).
\end{theorem}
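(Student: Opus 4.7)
The plan is to reduce to a local, explicit computation using the quasi-normal form decomposition of $\SM$ and then to verify the limit of Lagrangian cycles termwise against the classical expression of $\CCyc(\SM)$. Shrinking $U$ if necessary and invoking Proposition \ref{hda-dk}, after passing to a ramified covering $\pi\colon\tilde U\longrightarrow U$ adapted to the normal crossing divisor $D\cap U$, the pull-back $\pi^\ast\SM$ decomposes as a finite direct sum
\[
\pi^\ast\SM\simeq\bigoplus_\alpha \SE^{\varphi_\alpha}_{\tilde U\mid\tilde D}\otimes\SR_\alpha,
\]
where $\varphi_\alpha\in\SO_{\tilde U}(\ast\tilde D)/\SO_{\tilde U}$ and each $\SR_\alpha$ is a regular meromorphic connection along $\tilde D=\pi^{-1}(D)$. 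By construction, $\CCirr(\SM)$ is a direct image under $\pi$ of a weighted sum of closures of the graphs $\Lambda_{\varphi_\alpha}=\mathrm{graph}(d\varphi_\alpha)\subset T^\ast(\tilde U\setminus\tilde D)$ with coefficients $\rk(\SR_\alpha)$, so the proposed equality reduces to a statement about cycles in $T^\ast U$ built directly from the exponential factors $\varphi_\alpha$.

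Next I would carry out the limit in cotangent-fiber coordinates. Over $U\setminus D$, the cycle $\CCirr(\SM)+d\log g$ is a sum of graphs of the holomorphic one-forms $d(\varphi_\alpha+\log g)$, and the fiberwise scaling by $t$ replaces these by the graphs of $t\,d(\varphi_\alpha+\log g)$. As $t\to+0$ these graphs collapse onto the zero section over $U\setminus D$, producing the generic rank $\sum_\alpha\rk(\SR_\alpha)=\rk(\SM)$ as the multiplicity along $T^\ast_U U$. Near a smooth point of an irreducible component $D_i=\{g_i=0\}$ of $D$, writing $g=g_1^{m_1}\cdots g_r^{m_r}u$ for a unit $u$ and positive integers $m_i$, the leading asymptotics of $t\,d(\varphi_\alpha+\log g)$ are controlled jointly by the pole order $n_{\alpha,i}=-\ord_{D_i}(\varphi_\alpha)\ge 0$ and the simple pole of $d\log g$ along $D_i$, so that the limit cycle gains the conormal contribution $(n_{\alpha,i}+m_i)\rk(\SR_\alpha)\,[T^\ast_{D_i}U]$. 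Summing over $\alpha$ and comparing with the classical formula for the characteristic cycle of a meromorphic connection (via the Dubson--Kashiwara index theorem applied to $Sol_X(\SM)$), one recovers the claimed identity along the smooth strata of $D$.

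Finally, to match contributions over the singular strata $D_I=\bigcap_{i\in I}D_i$ of $D$, I would invoke the definition of the limit of (not necessarily homogeneous) Lagrangian cycles from Section \ref{subsec.5.2} together with the explicit description of $\overline{\Lambda_{\varphi_\alpha}}$ over $D_I$ coming out of the proof of Proposition \ref{hda-dk}. The expected contribution along $T^\ast_{D_I}U$ is a polynomial in the multi-indices $(n_{\alpha,i})_{i\in I}$ shifted by the multiplicities $m_i$, mirroring the combinatorics of the classical characteristic cycle at crossings. The main obstacle will be this bookkeeping at the singular strata and the descent from $\tilde U$ back to $U$ through the ramified covering $\pi$: the ramification indices enter both sides in compatible ways, but the termwise identification requires a careful analysis of how $\pi_\ast$ interacts with the limit operation $\lim_{t\to+0}t(\cdot)$ and with the shift by $d\log g$, which is only well-defined up to ramification on $\tilde U$.
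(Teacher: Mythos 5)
Your high-level strategy (reduce via ramification, track the limit of Lagrangian cycles, and match against Kashiwara's index formula) is in the right spirit, but there are two serious issues that prevent the proposal from being a proof, plus a smaller error in the stated stratum contribution.

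First, the claimed decomposition $\pi^\ast\SM\simeq\bigoplus_\alpha\SE^{\varphi_\alpha}_{\tilde U\mid\tilde D}\otimes\SR_\alpha$ with globally defined regular meromorphic connections $\SR_\alpha$ is not what a (quasi-)normal form gives. Definition \ref{def-A} provides a decomposition only \emph{sectorially} on the real oriented blow-up: for each $\theta\in\varpi^{-1}(D)$ there is an isomorphism $\SM^\SA|_W\simeq\bigoplus_i\bigl((\SE^{\varphi_i})^\SA|_W\bigr)^{\oplus m_i}$ on a small sector $W$. These local decompositions do not glue to a global direct-sum splitting of the $\SD$-module on $\tilde U$; the Stokes structure obstructs that. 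What does glue is the Lagrangian cycle $\CCirr(\SM)$ itself, precisely because it only depends on the exponential factors as (multi-valued) germs along $D$, and that is the object the paper works with directly via $\CCirr(\SM)=\sum_{i=1}^r[\Theta(\SM)_i]$.

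Second, and more importantly, you acknowledge but do not resolve the computation at the deep strata $D_I$, and this is exactly the content of the theorem. Saying the contribution ``is a polynomial in the multi-indices'' is both vague and misleading: the correct coefficient of $[T^\ast_{D_{i_1}\cap\cdots\cap D_{i_q}}U]$ given by Corollary \ref{thm-CCquasi} is the \emph{linear} expression $\sum_s\irr_{D_{i_s}^\circ}(\SM)+r$, and the whole point is to produce this number out of the limit. The paper's argument does this by an explicit toric computation: after replacing the shifted graph $[\Theta(\SM)_i]+d\log g$ by the graph of a monomial map $\Phi_A\colon(\CC^\ast)^l\to(\CC^\ast)^l$ for an integer matrix $A$ built from the pole orders $(k_{i1},\dots,k_{il})$, one uses the Smith normal form to show $\Phi_A$ is an unramified cover of degree $k_{i1}+\cdots+k_{il}+1$, and then uses an auxiliary matrix $C$ with $AC=(k_{i1}+\cdots+k_{il}+1)E_l$ to verify that \emph{every} preimage point tends to the origin as $t\to+0$. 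Both the degree count and the ``all solutions collapse to $0$'' step are essential and are not supplied by your argument. Without them the proposal reduces to a plausible but unverified heuristic.

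Finally, the formula $(n_{\alpha,i}+m_i)\rk(\SR_\alpha)\,[T^\ast_{D_i}U]$ is incorrect when $g$ vanishes to order $m_i>1$ along $D_i$: a direct one-variable check (take $\varphi=u_1^{-k}$, $g=u_1^{m}$, and count roots of $t\bigl(-k u_1^{-k-1}+m u_1^{-1}\bigr)=\xi$ near $u_1=0$) shows the multiplicity is $k+1$, independent of $m$. This also flags that you have not verified that your answer is independent of the chosen defining function $g$, which it must be for the statement to make sense.
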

Next we consider the following holonomic $\SD_X$-modules. 

\begin{definition}\label{intro-def-expD} 
(cf. \cite{Tak22}) Let $X$ be a complex manifold. 
Then we say that a holonomic $\SD_X$-module $\SM$ is an exponentially twisted holonomic
$\SD_X$-module if there exist a meromorphic function $f\in\SO_X(\ast Y)$ along a
closed hypersurface $Y\subset X$ and a regular holonomic $\SD_X$-module $\SN$ such 
that we have an isomorphism
\begin{equation}
\SM\simeq \SE_{X\setminus Y\vbar X}^f\Dotimes\SN.
\end{equation}
\end{definition}
In view of the recent progress in Kedlaya \cite{Ked10, Ked11}, 
Mochizuki \cite{Moc11} and D'Agnolo-Kashiwara \cite{DK16}, 
the exponentially twisted holonomic D-modules in Definition 
\ref{intro-def-expD} can be considered as natural prototypes or building blocks 
of general holonomic D-modules, and their Fourier transforms 
were studied precisely in \cite{Tak22}. 
For an exponentially twisted holonomic $\SD_X$-module $\SM$ in Definition 
\ref{intro-def-expD} we define a (not necessarily homogeneous) Lagrangian cycle $\CCirr(\SM)$
in $T^\ast (X \setminus Y) \subset T^\ast X$ by
\begin{equation}
\CCirr(\SM)\coloneq \CCyc(\SN\vbar_{X \setminus Y}) +df.
\end{equation} 
We call it the irregular characteristic cycle of $\SM$. 
Note that if $X$ is not compact it depends not only on $\SM$ itself  but also on
the decomposition $\SM\simeq \SE_{X\setminus Y\vbar X}^f\Dotimes\SN$ of $\SM$.
Then we obtain the following  Ginsburg type formula for the (usual) 
characteristic cycle $\CCyc(\SM)$ of $\SM$. 

\begin{theorem}\label{mthm-intro} 
Let $\SM, f, \SN$ etc. be as in Definition \ref{intro-def-expD} and 
$g\colon X\longrightarrow\CC$ a (local) defining holomorphic function of the divisor $Y\subset X$.
Then we have
\begin{equation}
\CCyc(\SM)=\lim_{t\rightarrow+0}t\Bigl\{\CCirr(\SM)+d\log g\Bigr\}.
\end{equation}
\end{theorem}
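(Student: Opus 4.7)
The plan is to reduce Theorem \ref{mthm-intro} to the normal crossing quasi-normal form case handled by Theorem \ref{intro-thm-5-2-1}, via a resolution of singularities, and then to push the resulting identity down to $X$ using the compatibility of characteristic cycles with proper direct images.

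First I would apply Hironaka's theorem to construct a proper birational morphism $\pi\colon\tilde{X}\longrightarrow X$ from a complex manifold $\tilde{X}$, an isomorphism over $X\setminus Y$, such that $\tilde{Y}:=\pi^{-1}(Y)$, together with the pole divisor of $\tilde{f}:=f\circ\pi$ and a normal crossing enlargement of the singular support of the D-module inverse image $\tilde{\SN}$ of $\SN$, is contained in a simple normal crossing divisor $\tilde{D}\subset\tilde{X}$. Invoking the formal structure theorems of Kedlaya \cite{Ked10,Ked11} and Mochizuki \cite{Moc11}, together with a preliminary dévissage reducing $\SN$ to minimal extensions of simple local systems along the strata of a Whitney stratification of the pair $(X,Y)$, one can arrange that the pulled back exponentially twisted D-module $\tilde{\SM}:=\SE_{\tilde{X}\setminus\tilde{Y}\vbar\tilde{X}}^{\tilde{f}}\Dotimes\tilde{\SN}$ is, locally near each point of $\tilde{D}$, a meromorphic connection having a quasi-normal form along $\tilde{D}$.

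Next, Theorem \ref{intro-thm-5-2-1} applied on $\tilde{X}$ yields for each such local piece
\begin{equation*}
\CCyc(\tilde{\SM})=\lim_{t\rightarrow+0}t\Bigl\{\CCirr(\tilde{\SM})+d\log\tilde{g}\Bigr\}
\end{equation*}
in $T^\ast\tilde{X}$, where $\tilde{g}:=g\circ\pi$. Away from $Y$, both sides of the formula in Theorem \ref{mthm-intro} already agree with the classical characteristic cycle of the smooth exponentially twisted regular connection $\SE_{X\setminus Y\vbar X}^f\Dotimes\SN$ on $X\setminus Y$, so the essential content to be established lies over $Y$.

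Finally, I would push the identity forward along $\pi$. Since $\pi$ is proper, $\SM$ coincides with the D-module direct image of $\tilde{\SM}$, and the classical formula $\CCyc(\pi_\ast\tilde{\SM})=\pi_\ast\CCyc(\tilde{\SM})$ transports the left hand side to $\CCyc(\SM)$. For the right hand side one verifies $\pi_\ast\CCirr(\tilde{\SM})=\CCirr(\SM)$, using that $\pi$ is an isomorphism over $X\setminus Y$, so that $\pi^\ast df=d\tilde{f}$ and $\pi_\ast\CCyc(\tilde{\SN}\vbar_{\tilde{X}\setminus\tilde{Y}})=\CCyc(\SN\vbar_{X\setminus Y})$, and one then needs $\pi_\ast$ to commute with the limit $\lim_{t\to+0}t\{\cdot\}$ of Lagrangian cycles defined in Section \ref{subsec.5.2}. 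This last commutation is where I expect the main obstacle: the non-homogeneous summand $t\,d\log\tilde{g}$ acquires additional simple poles along the components of $\tilde{Y}$ exceptional for $\pi$, and one must verify via a careful analysis of the limit construction in Section \ref{subsec.5.2} along the fibers of $\pi$ that any spurious Lagrangian contributions arising from those exceptional components cancel in the pushforward and the limit, leaving precisely the cycle predicted by Theorem \ref{mthm-intro} on $X$.
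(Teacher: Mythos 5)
Your broad strategy—resolution of singularities, reduction to the normal crossing/quasi-normal form case, then pushforward along the proper morphism—matches the paper's at the top level, but there are two genuine gaps in the way you set it up.

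The first gap concerns support. Theorem \ref{intro-thm-5-2-1} applies only to meromorphic connections (full support, $\SM\simto\SM(\ast D)$). If $\supp\SN\subsetneq X$, then no blow-up of $X$ will make $\tilde{\SM}=\SE^{\tilde{f}}\Dotimes\tilde{\SN}$ a meromorphic connection on $\tilde{X}$: its support stays inside $\pi^{-1}(\supp\SN)$. Your d\'evissage into minimal extensions along strata doesn't fix this, since each piece is supported on a stratum closure, still a proper subvariety. The paper instead reduces via algebraic local cohomology to the case where $\SN$ has irreducible support $Z$ and is a direct image of a connection from $Z\setminus Y_2$, resolves the \emph{support} $Z$ to a smooth $\tilde Z$ with the relevant divisor normal crossing \emph{on $\tilde Z$}, and then invokes Kashiwara's equivalence to replace $\bfD\nu^\ast\SN(\ast Y_1)$ by a genuine meromorphic connection on the lower-dimensional manifold $\tilde Z$. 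Only then does Theorem \ref{intro-thm-5-2-1} (through Corollary \ref{thm-CCquasi}) become applicable. Your proposal does not carry out this support reduction and so cannot invoke the normal-crossing theorem as stated.

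The second gap concerns a mismatch in the statement you are reducing to. In Theorem \ref{intro-thm-5-2-1} the function $g$ defines the \emph{entire} normal crossing divisor and $\CCirr$ is the sum over exponential factors (pure graphs of $df_i$), whereas in Theorem \ref{mthm-intro} the function $g$ defines only $Y=Y_1$ and $\CCirr(\SM)=\CCyc(\SN|_{X\setminus Y})+df$ already incorporates the conormal geometry of $\SN$. The two formulas are not the same limit; reconciling them is where the paper performs a two-parameter $(t,s)$ limit—$t$ for the exponential twist, $s$ from Ginsburg's formula for the localization along $Y_2$—and proves that these limits can be combined because the intermediate Lagrangian cycles $\Lambda_{t,s}$ stay away from $T^\ast$ of the divisor for all $t,s>0$ (using Lemma \ref{new-lemmes} and the final Remark after Theorem \ref{thm-twistD}). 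Finally, the pushforward-commutes-with-limit step you flag as the ``main obstacle'' is actually the easiest part once the rest is in place: it is exactly Proposition \ref{prop-Lag_push}, applicable here precisely because the family $\Lambda_{t,s}$ stays over the open locus where $\nu$ is an isomorphism. So your instinct to worry is in the right direction, but the real work lies in the support reduction and the limit interchange, not in the pushforward itself.
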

Recall that the formula of Ginsburg 
in \cite[Theorem 3.3]{Gin86} describes the characteristic cycles of 
the localizations of regular holonomic $\SD_X$-modules 
along closed hypersurfaces $Y \subset X$ and it was 
generalized later to real constructible sheaves (including 
also the o-minimal ones) by Schmid and Vilonen 
in \cite{SV96}. For the proof of Theorems \ref{intro-thm-5-2-1} 
and \ref{mthm-intro}, we use the methods in \cite{SV96}. 
Note also that some of the intermediate 
results in this paper e.g. the assertions (ii) and (iii) 
of Proposition \ref{thm-1} and Corollary \ref{thm-CCquasi} have been 
obtained previously by Hu and Teyssier in \cite{HT25} by a totally 
different method. Whereas our proof relies on the 
theories of ind-sheaves and the irregular Riemann-Hilbert correspondence, 
Hu and Teyssier use Sabbah's study of irregularity sheaves 
in \cite{Sab17}. It is remarkable that in the two dimensional case 
$\dim X=2$ they proved their formula of (usual) 
characteristic cycles for all holonomic D-modules. 

\medskip 
\indent This paper is organized as follows. First, in 
Section \ref{uni-sec:2}, we recall some basic notions and 
results which will be used in this paper. In Section 
\ref{sec:qNor}, we study the enhanced and usual solution complexes of  
irregular holonomic D-modules having a quasi-normal form 
in the sense of Mochizuki \cite[Chapter 5]{Moc11}. 
In Section \ref{sec-index}, using the results in Section 
\ref{sec:qNor} we obtain an index formula for 
irregular integrable connections, which expresses the global 
Euler-Poincar{\'e} indices of their algebraic de Rham complexes. 
Then finally in Section \ref{sec-Gins}, we prove 
the Ginsburg type formulas for characteristic cycles in 
Theorems \ref{intro-thm-5-2-1} and \ref{mthm-intro}.

\section{Preliminary Notions and Results}\label{uni-sec:2}
In this section, we briefly recall some basic notions 
and results which will be used in this paper. 
We assume here that the reader is familiar with the 
theory of sheaves and functors in the framework of 
derived categories. For them we follow the terminologies 
in \cite{KS90} etc. For a topological space $M$ 
denote by $\BDC(\CC_M)$ the derived category 
consisting of bounded 
complexes of sheaves of $\CC$-vector spaces on it.

\subsection{Ind-sheaves}\label{sec:3}
We recall some basic notions 
and results on ind-sheaves. References are made to 
Kashiwara-Schapira \cite{KS01} and \cite{KS06}. 
Let $M$ be a good topological space (which is locally compact, 
Hausdorff, countable at infinity and has finite soft dimension). 
We denote by $\Mod(\CC_M)$ the abelian category of sheaves 
of $\CC$-vector spaces on it and by $\I\CC_M$ 
that of ind-sheaves. Then there exists a 
natural exact embedding $\iota_M : \Mod(\CC_M)\to\I\CC_M$ 
of categories. We sometimes omit it.
It has an exact left adjoint $\alpha_M$, 
that has in turn an exact fully faithful
left adjoint functor $\beta_M$: 
 \[\xymatrix@C=60pt{\Mod(\CC_M)  \ar@<1.0ex>[r]^-{\iota_{M}} 
 \ar@<-1.0ex>[r]_- {\beta_{M}} & \I\CC_M 
\ar@<0.0ex>[l]|-{\alpha_{M}}}.\]

The category $\I\CC_M$ does not have enough injectives. 
Nevertheless, we can construct the derived category $\BDC(\I\CC_M)$ 
for ind-sheaves and the Grothendieck six operations among them. 
We denote by $\otimes$ and $\rihom$ the operations 
of tensor products and internal homs respectively. 
If $f : M\to N$ be a continuous map, we denote 
by $f^{-1}, \rmR f_\ast, f^!$ and $\rmR f_{!!}$ the 
operations of inverse images,
direct images, proper inverse images and proper direct images 
respectively. 
We set also $\rhom := \alpha_M\circ\rihom$. 
We thus obtain the functors
\begin{align*}
\iota_M &: \BDC(\CC_M)\to \BDC(\I\CC_M),\\
\alpha_M &: \BDC(\I\CC_M)\to \BDC(\CC_M),\\
\beta_M &: \BDC(\CC_M)\to \BDC(\I\CC_M),\\
\otimes &: \BDC(\I\CC_M)\times\BDC(\I\CC_M)\to\BDC(\I\CC_M), \\
\rihom &: \BDC(\I\CC_M)^{\op}\times\BDC(\I\CC_M)\to\BDC(\I\CC_M), \\
\rhom &: \BDC(\I\CC_M)^{\op}\times\BDC(\I\CC_M)\to\BDC(\CC_M), \\
\rmR f_\ast &: \BDC(\I\CC_M)\to\BDC(\I\CC_N),\\
f^{-1} &: \BDC(\I\CC_N)\to\BDC(\I\CC_M),\\
\rmR f_{!!} &: \BDC(\I\CC_M)\to\BDC(\I\CC_N),\\
f^! &: \BDC(\I\CC_N)\to\BDC(\I\CC_M).
\end{align*}

Note that $(f^{-1}, \rmR f_\ast)$ and 
$(\rmR f_{!!}, f^!)$ are pairs of adjoint functors.
We may summarize the commutativity of the various functors 
we have introduced in the table below. 
Here, $``\circ"$ means that the functors commute,
and $``\times"$ they do not.
\begin{table}[h]
\begin{equation*}
   \begin{tabular}{l||c|c|c|c|c|c|c}
    {} & $\otimes$ & $f^{-1}$ & $\rmR f_\ast$ & $f^!$ & 
$\rmR f_{!!}$ & $\underset{}{\varinjlim}$ &  $\varprojlim$ \\ \hline \hline 
    $\underset{}{\iota}$ & $\circ$ & $\circ$ & $\circ$ & 
$\circ$ & $\times$ & $\times$ & $\circ$  \\ \hline
    $\underset{}{\alpha}$ & $\circ$ & $\circ$ & $\circ$ & 
$\times$ & $\circ$ & $\circ$ & $\circ$ \\ \hline
    $\underset{}{\beta}$ & $\circ$ & $\circ$ & $\times$  
& $\times$ & $\times$ & $\circ$& $\times$\\ \hline 
     $\underset{}{\varinjlim}$ & $\circ$ & $\circ$ & 
$\times$ & $\circ$ & $\circ$ &\multicolumn{2}{|c}{} \\\cline{1-6}
     $\underset{}{\varprojlim}$ & $\times$ & $\times$ & 
$\circ$ & $\times$ & $\times$ &\multicolumn{2}{|c}{} \\\cline{1-6}
   \end{tabular}
   \end{equation*}
\end{table}

\subsection{Ind-sheaves on Bordered Spaces}\label{sec:4} 
For the results in this subsection, we refer to 
D'Agnolo-Kashiwara \cite{DK16}. 
A bordered space is a pair $M_{\infty} = (M, \che{M})$ of
a good topological space $\che{M}$ and an open subset $M\subset\che{M}$.
A morphism $f : (M, \che{M})\to (N, \che{N})$ of bordered spaces
is a continuous map $f : M\to N$ such that the first projection
$\che{M}\times\che{N}\to\che{M}$ is proper on
the closure $\var{\Gamma}_f$ of the graph $\Gamma_f$ of $f$ 
in $\che{M}\times\che{N}$.
If also the second projection $\var{\Gamma}_f\to\che{N}$ is proper, 
we say that $f$ is semi-proper. 
The category of good topological spaces embeds into that
of bordered spaces by the identification $M = (M, M)$. 
We define the triangulated category of ind-sheaves on 
$M_{\infty} = (M, \che{M})$ by 
\[\BDC(\I\CC_{M_\infty}) := 
\BDC(\I\CC_{\che{M}})/\BDC(\I\CC_{\che{M}\backslash M}).\]
The quotient functor
\[\mathbf{q} : \BDC(\I\CC_{\che{M}})\to\BDC(\I\CC_{M_\infty})\]
has a left adjoint $\mathbf{l}$ and a right 
adjoint $\mathbf{r}$, both fully faithful, defined by 
\[\mathbf{l}(\mathbf{q} F) := \CC_M\otimes F,\hspace{25pt} 
\mathbf{r}(\mathbf{q} F) := \rihom(\CC_M, F). \]
For a morphism $f : M_\infty\to N_\infty$ 
of bordered spaces, 
the Grothendieck's operations 
\begin{align*} 
\otimes &: \BDC(\I\CC_{M_\infty})\times
\BDC(\I\CC_{M_\infty})\to\BDC(\I\CC_{M_\infty}), \\
\rihom &: \BDC(\I\CC_{M_\infty})^{\op}\times
\BDC(\I\CC_{M_\infty})\to\BDC(\I\CC_{M_\infty}), \\
\rmR f_\ast &: \BDC(\I\CC_{M_\infty})\to\BDC(\I\CC_{N_\infty}),\\
f^{-1} &: \BDC(\I\CC_{N_\infty})\to\BDC(\I\CC_{M_\infty}),\\
\rmR f_{!!} &: \BDC(\I\CC_{M_\infty})\to\BDC(\I\CC_{N_\infty}),\\
f^! &: \BDC(\I\CC_{N_\infty})\to\BDC(\I\CC_{M_\infty}) \\
\end{align*}
are defined by
\begin{align*} 
\mathbf{q}(F)\otimes\mathbf{q}(G) &:=
\mathbf{q}(F\otimes G), \\
\rihom(\mathbf{q}(F), \mathbf{q}(G)) &:=
\mathbf{q}\big(\rihom(F, G)\big), \\
\rmR f_\ast(\mathbf{q}(F)) &:=
\mathbf{q}\big(\rmR {\rm pr}_{2\ast}\rihom(
\CC_{\Gamma_f}, {\rm pr}_1^!F)\big),\\
f^{-1} (\mathbf{q}(G))&:=\mathbf{q}\big(\rmR{\rm pr_{1!!}}
(\CC_{\Gamma_f}\otimes {\rm pr_2}^{-1}G)\big),\\
\rmR f_{!!}(\mathbf{q}(F)) &:=
\mathbf{q}\big(\rmR{\rm pr_2}_{!!}(
\CC_{\Gamma_f}\otimes{\rm pr_1}^{-1}F)\big),\\
f^!(\mathbf{q}(G)) &:=\mathbf{q}
\big(\rmR {\rm pr}_{1\ast}\rihom(\CC_{\Gamma_f}, {\rm pr}_2^!G)\big)
\end{align*}
respectively, where ${\rm pr_1} : \che{M}\times\che{N}\to\che{M}$ and 
${\rm pr_2} : \che{M}\times\che{N}\to\che{N}$ 
are the projections. Moreover, there exists a natural embedding 
\[\xymatrix@C=30pt@M=10pt{\BDC(\CC_M)\ar@{^{(}->}[r] & 
\BDC(\I\CC_{M_\infty}).}\]

\subsection{Enhanced Sheaves}\label{sec:5}
For the results in this subsection, see 
D'Agnolo-Kashiwara \cite{DK16} and 
Kashiwara-Schapira \cite{KS16}. 
Let $M$ be a good topological space. 
We consider the maps 
\[M\times\RR^2\xrightarrow{p_1, p_2, \mu}M
\times\RR\overset{\pi}{\longrightarrow}M\]
where $p_1, p_2$ are the first and the second projections 
and we set $\pi (x,t):=x$ and 
$\mu(x, t_1, t_2) := (x, t_1+t_2)$. 
Then the convolution functors for 
sheaves on $M \times \RR$ are defined by
\begin{align*}
F_1\Potimes F_2 &:= \rmR \mu_!(p_1^{-1}F_1\otimes p_2^{-1}F_2),\\
\Prhom(F_1, F_2) &:= \rmR p_{1\ast}\rhom(p_2^{-1}F_1, \mu^!F_2).
\end{align*}
We define the triangulated category of enhanced sheaves on $M$ by 
\[\BEC(\CC_M) := \BDC(\CC_{M\times\RR})/\pi^{-1}\BDC(\CC_M). \] 
Then the quotient functor
\[\Q : \BDC(\CC_{M \times \RR} )  \to\BEC(\CC_M)\]
has fully faithful left and right adjoints $\bfL^\rmE, \bfR^\rmE$ defined by 
\[\bfL^\rmE(\Q F) := (\CC_{\{t\geq0\}}\oplus\CC_{\{t\leq 0\}})
\Potimes F ,\hspace{20pt} \bfR^\rmE(\Q G) :
=\Prhom(\CC_{\{t\geq0\}}\oplus\CC_{\{t\leq 0\}}, G), \]
where $\{t\geq0\}$ stands for $\{(x, t)\in 
M\times\RR\ |\ t\geq0\}$ and $\{t\leq0\}$ is 
defined similarly. The convolution functors 
are defined also for enhanced sheaves. We denote them 
by the same symbols $\Potimes$, $\Prhom$. 
For a continuous map $f : M \to N $, we 
can define naturally the operations 
$\bfE f^{-1}$, $\bfE f_\ast$, $\bfE f^!$, $\bfE f_{!}$ 
for enhanced sheaves. 
We have also a 
natural embedding $\e : \BDC( \CC_M) \to \BEC( \CC_M)$ defined by 
\[ \e(F) := \Q(\CC_{\{t\geq0\}}\otimes\pi^{-1}F). \] 
For a continuous function $\varphi : U\to \RR$ 
defined on an open subset $U \subset M$ of $M$  we define 
the exponential enhanced sheaf by 
\[ {\rm E}_{U|M}^\varphi := 
\Q(\CC_{\{t+\varphi\geq0\}} ), \]
where $\{t+\varphi\geq0\}$ stands for 
$\{(x, t)\in M\times{\RR}\ |\ x\in U, t+\varphi(x)\geq0\}$.

\subsection{Enhanced Ind-sheaves}\label{sec:6}
We recall some basic notions 
and results on enhanced ind-sheaves. References are made to 
D'Agnolo-Kashiwara \cite{DK16} and 
Kashiwara-Schapira \cite{KS16}. 
Let $M$ be a good topological space.
Set $\RR_\infty := (\RR, \var{\RR})$ for 
$\var{\RR} := \RR\sqcup\{-\infty, +\infty\}$,
and let $t\in\RR$ be the affine coordinate. 
We consider the maps 
\[M\times\RR_\infty^2\xrightarrow{p_1, p_2, \mu}M
\times\RR_\infty\overset{\pi}{\longrightarrow}M\]
where $p_1, p_2$ and $\pi$ are morphisms
of bordered spaces induced by the projections.
And $\mu$ is a morphism of bordered spaces induced by the map 
$M\times\RR^2\ni(x, t_1, t_2)\mapsto(x, t_1+t_2)\in M\times\RR$.
Then the convolution functors for 
ind-sheaves on $M \times \RR_\infty$ are defined by
\begin{align*}
F_1\Potimes F_2 &:= \rmR\mu_{!!}(p_1^{-1}F_1\otimes p_2^{-1}F_2),\\
\Prihom(F_1, F_2) &:= \rmR p_{1\ast}\rihom(p_2^{-1}F_1, \mu^!F_2).
\end{align*}
Now we define the triangulated category 
of enhanced ind-sheaves on $M$ by 
\[\BEC(\I\CC_M) := \BDC(\I\CC_{M 
\times\RR_\infty})/\pi^{-1}\BDC(\I\CC_M).\]
Note that we have a natural embedding of categories
\[\BEC(\CC_M) \xhookrightarrow{\ \ \ }\BEC(\I\CC_M).\]
The quotient functor
\[\Q : \BDC(\I\CC_{M\times\RR_\infty})\to\BEC(\I\CC_M)\]
has fully faithful left and right adjoints $\bfL^\rmE,\bfR^\rmE$ defined by 
\[\bfL^\rmE(\Q K) := (\CC_{\{t\geq0\}}\oplus\CC_{\{t\leq 0\}})
\Potimes K ,\hspace{20pt} \bfR^\rmE(\Q K) :
=\Prihom(\CC_{\{t\geq0\}}\oplus\CC_{\{t\leq 0\}}, K), \]
where $\{t\geq0\}$ stands for 
$\{(x, t)\in M\times\var{\RR}\ |\ t\in\RR, t\geq0\}$ 
and $\{t\leq0\}$ is defined similarly.

The convolution functors 
are defined also for enhanced ind-sheaves. We denote them 
by the same symbols $\Potimes$, $\Prihom$. 
For a continuous map $f : M \to N $, we 
can define also the operations 
$\bfE f^{-1}$, $\bfE f_\ast$, $\bfE f^!$, $\bfE f_{!!}$ 
for enhanced ind-sheaves. For example, 
by the natural morphism $\tl{f}: M \times \RR_{\infty} \to 
N \times \RR_{\infty}$ of bordered spaces associated to 
$f$ we set $\bfE f_\ast ( \Q K)= \Q(\rmR \tl{f}_{\ast}(K))$. 
The other operations are defined similarly. 
We thus obtain the six operations $\Potimes$, $\Prihom$,
$\bfE f^{-1}$, $\bfE f_\ast$, $\bfE f^!$, $\bfE f_{!!}$ 
for enhanced ind-sheaves .
Moreover we denote by $\rmD_M^\rmE$ 
the Verdier duality functor for enhanced ind-sheaves.
We have outer hom functors
\begin{align*}
\rihom^\rmE(K_1, K_2) &:= \rmR\pi_\ast\rihom(\bfL^\rmE K_1, \bfL^\rmE K_2)
\simeq \rmR\pi_\ast\rihom(\bfL^\rmE K_1, \bfR^\rmE K_2),\\
\rhom^\rmE(K_1, K_2) &:= \alpha_M\rihom^\rmE(K_1, K_2),\\
\rHom^\rmE(K_1, K_2) &:=\rmR\Gamma(M; \rhom^\rmE(K_1, K_2)),
\end{align*}
with values in $\BDC(\I\CC_M), 
\BDC(\CC_M)$ and $\BDC(\CC)$, respectively. 
Moreover for $F\in\BDC(\I\CC_M)$ and $K\in\BEC(\I\CC_M)$ the objects 
\begin{align*}
\pi^{-1}F\otimes K &:=\Q(\pi^{-1}F\otimes \bfL^\rmE K),\\
\rihom(\pi^{-1}F, K) &:=\Q\big(\rihom(\pi^{-1}F, \bfR^\rmE K)\big). 
\end{align*}
in $\BEC(\I\CC_M)$ are well-defined. 
Set $\CC_M^\rmE := \Q 
\Bigl(``\underset{a\to +\infty}{\varinjlim}"\ \CC_{\{t\geq a\}}
\Bigr)\in\BEC(\I\CC_M)$. 
Then we have 
natural embeddings $\e, e : \BDC(\I\CC_M) \to \BEC(\I\CC_M)$ defined by 
\begin{align*}
\e(F) & := \Q(\CC_{\{t\geq0\}}\otimes\pi^{-1}F) \\ 
e(F) &:=  \CC_M^\rmE\otimes\pi^{-1}F
\simeq \CC_M^\rmE\Potimes\e(F). 
\end{align*}

For a continuous function $\varphi : U\to \RR$ 
defined on an open subset $U \subset M$ of $M$  we define 
the exponential enhanced ind-sheaf by 
\[\EE_{U|M}^\varphi := 
\CC_M^\rmE\Potimes {\rm E}_{U|M}^\varphi
=
\CC_M^\rmE\Potimes
\Q\CC_{\{t+\varphi\geq0\}}
=
 \Q 
\Bigl(``\underset{a\to +\infty}{\varinjlim}"\ \CC_{\{t+\varphi\geq a\}}
\Bigr)\]
where $\{t+\varphi\geq0\}$ stands for 
$\{(x, t)\in M\times\var{\RR}\ |\ t\in\RR, x\in U, t+\varphi(x)\geq0\}$.

\subsection{$\SD$-Modules}\label{sec:7}
In this subsection we recall some basic notions 
and results on $\SD$-modules. 
References are made to \cite{Bjo93}, 
\cite{HTT08}, \cite[\S 7]{KS01}, 
\cite[\S 8, 9]{DK16},
\cite[\S 3, 4, 7]{KS16} and 
\cite[\S 4, 5, 6, 7, 8]{Kas16}. 
For a complex manifold $X$ we denote by 
$d_X$ its complex dimension. 
Denote by $\SO_X, \Omega_X$ and $\SD_X$ 
the sheaves of holomorphic functions, 
holomorphic differential forms of top degree 
and holomorphic differential operators, respectively. 
Let $\BDC(\SD_X)$ be the bounded derived category 
of left $\SD_X$-modules and $\BDC(\SDop_X)$ 
be that of right $\SD_X$-modules. 
Moreover we denote by $\BDCcoh(\SD_X)$, $\BDC_{\rm good}(\SD_X)$,
$\BDChol(\SD_X)$ and $\BDCrh(\SD_X)$ the full triangulated subcategories
of $\BDC(\SD_X)$ consisting of objects with coherent, good, 
holonomic and regular holonomic cohomologies, 
respectively.
For a morphism $f : X\to Y$ of complex manifolds, 
denote by $\Dotimes, \rhom_{\SD_X}, \bfD f_\ast, \bfD f^\ast$ 
the standard operations for $\SD$-modules. 
We define also the duality functor $\DD_X : \BDCcoh(\SD_X)^{\op} 
\simto \BDCcoh(\SD_X)$ 
by 
\[\DD_X(\SM):=\rhom_{\SD_X}(\SM, \SD_X)
\uotimes{\SO_X}\Omega^{\otimes-1}_X[d_X].\]
Note that there exists 
an equivalence of categories 
$( \cdot )^{\rm r} : \Mod(\SD_X)\simto\Mod(\SDop_X)$ given by 
\[\SM^{\rm r}:=\Omega_X\uotimes{\SO_X}\SM.\]
The classical de Rham and solution functors are defined by  
\begin{align*}
DR_X &:  \BDCcoh (\SD_X)\to\BDC(\CC_X),
\hspace{40pt}\SM \longmapsto \Omega_X\Lotimes{\SD_X}\SM, \\
Sol_X &: \BDCcoh (\SD_X)^{\op}\to\BDC(\CC_X),
\hspace{30pt}\SM \longmapsto \rhom_{\SD_X}(\SM, \SO_X).
\end{align*}
Then for $\SM\in\BDCcoh(\SD_X)$ 
we have an isomorphism $Sol_X(\SM)[d_X]\simeq DR_X(\DD_X\SM)$. 
For a closed hypersurface $D\subset X$ 
in $X$ we denote by $\SO_X(\ast D)$ 
the sheaf of meromorphic functions on $X$ with poles in $D$. 
Then for $\SM\in\BDC(\SD_X)$ we set 
\[\SM(\ast D) := \SM\Dotimes\SO_X(\ast D).\]
For $f\in\SO_X(\ast D)$ and $U := X\bs D$, set 
\begin{align*}
\SD_Xe^f &:= \SD_X/\{P \in \SD_X  \ |\ Pe^f|_U = 0\}, \\
\SE_{U|X}^f &:= \SD_Xe^f(\ast D).
\end{align*}
Note that $\SE_{U|X}^f$ is holonomic and there exists an isomorphism 
\[\DD_X(\SE_{U|X}^f)(\ast D) \simeq \SE_{U|X}^{-f}.\]
Namely $\SE_{U|X}^f$ is a meromorphic connection associated 
to $d+df$. 

One defines the ind-sheaf $\SO_X^\rmt$ of tempered 
holomorphic functions 
as the Dolbeault complex with coefficients in the ind-sheaf 
of tempered distributions. 
More precisely, denoting by $\overline{X}$ the complex conjugate manifold 
to $X$ and by $X_{\RR}$ the underlying real analytic manifold of $X$,
we set
\[\SO_X^\rmt := \rihom_{\SD_{\overline{X}}}(
\SO_{\overline{X}}, \mathcal{D}b_{X_\RR}^\rmt), \]
where $\mathcal{D}b_{X_\RR}^\rmt$ is the ind-sheaf of 
tempered distributions on $X_\RR$
(for the definition see \cite[Definition 7.2.5]{KS01}). 
Moreover, we set 
 \[\Omega_X^\rmt := \beta_X\Omega_X\otimes_{\beta_X\SO_X}\SO_X^\rmt.\] 
Then the tempered de Rham and solution functors 
are defined by 
\begin{align*}
DR_X^\rmt &: \BDCcoh (\SD_X)\to\BDC(\I\CC_X),
\hspace{40pt} 
\SM \longmapsto \Omega_X^\rmt\Lotimes{\SD_X}\SM, 
\\
Sol_X^\rmt &: \BDCcoh (\SD_X)^{\op}\to\BDC(\I\CC_X), 
\hspace{40pt}
\SM \longmapsto \rihom_{\SD_X}(\SM, \SO_X^\rmt). 
\end{align*}
Note that we have isomorphisms
\begin{align*}
Sol_X(\SM) &\simeq \alpha_XSol_X^\rmt(\SM), \\
DR_X(\SM) &\simeq \alpha_XDR_X^\rmt(\SM), \\
Sol_X^\rmt(\SM)[d_X] &\simeq DR_X^\rmt(\DD_X\SM).
\end{align*}

Let $\PP$ be the one dimensional complex 
projective space $\PP^1$ and 
$i : X\times\RR_\infty\to X\times\PP$ 
the natural morphism of bordered spaces and
$\tau\in\CC\subset\PP$ the affine coordinate 
such that $\tau|_\RR$ is that of $\RR$. 
We then define objects $\SO_X^\rmE\in\BEC(\I\SD_X)$ and 
$\Omega_X^\rmE\in\BEC(\I\SD_X^{\op})$ by 
\begin{align*}
\SO_X^\rmE &:= \rihom_{\SD_{\overline{X}}}(
\SO_{\overline{X}}, \mathcal{D}b_{X_\RR}^{\T})\\
&\simeq i^!\bigl((\SE_{\CC|\PP}^{-\tau})^{\rm r} 
\Lotimes{\SD_\PP}\SO_{X\times\PP}^\rmt\bigr)[1]
\simeq i^!\rihom_{\SD_\PP}(\SE_{\CC|\PP}^{\tau}, 
\SO_{X\times\PP}^\rmt)[2],\\
\Omega_X^\rmE &:= \Omega_X\Lotimes{\SO_X}\SO_X^\rmE\simeq 
i^!(\Omega_{X\times\PP}^\rmt\Lotimes{\SD_\PP}
\SE_{\CC|\PP}^{-\tau})[1], 
\end{align*}
where $\mathcal{D}b_{X_\RR}^{\T}$ 
stand for the enhanced ind-sheaf 
of tempered distributions on $X_\RR$ 
(for the definition see \cite[Definition 8.1.1]{DK16}). 
We call $\SO_X^\rmE$ the enhanced ind-sheaf 
of tempered holomorphic functions. 
Note that there exists an isomorphism
\[i_0^!\bfR^\rmE\SO_X^\rmE\simeq\SO_X^\rmt, \]
where $i_0 : X\to X\times\RR_\infty$ is the 
inclusion map of bordered spaces induced by $x\mapsto (x, 0)$. 
The enhanced de Rham and solution functors 
are defined by 
\begin{align*}
DR_X^\rmE &: \BDCcoh (\SD_X)\to\BEC(\I\CC_X),
\hspace{40pt} 
\SM \longmapsto \Omega_X^\rmE\Lotimes{\SD_X}\SM,
\\
Sol_X^\rmE &: \BDCcoh (\SD_X)^{\op}\to\BEC(\I\CC_X), 
\hspace{40pt} 
\SM \longmapsto \rihom_{\SD_X}(\SM, \SO_X^\rmE). 
\end{align*}
Then for $\SM\in\BDCcoh(\SD_X)$ 
we have isomorphism $Sol_X^\rmE(\SM)[d_X]\simeq DR_X^\rmE(\DD_X\SM)$ 
and 
$Sol^{\rmt}_X(\M)\simeq
i_0^!{\bfR^\rmE}Sol_X^{\rmE}(\M).$ 
We recall the following results of \cite{DK16}.
\begin{theorem}
\begin{enumerate}\label{thm-4}
\item[\rm{(i)}] For $\SM\in\BDC_{\rm hol}(\SD_X)$ there
is an isomorphism in $\BEC(\I\CC_X)$
\[\rmD_X^\rmE\bigl(DR_X^\rmE(\SM)\bigr) \simeq Sol_X^\rmE(\SM)[d_X].\]

\item[\rm{(ii)}] Let $f : X\to Y$ be a morphism of complex manifolds.
Then for $\SN\in\BDC_{\rm hol}(\SD_Y)$ 
there is an isomorphism in $\BEC(\I\CC_X)$
\[Sol_X^\rmE({\bfD} f^\ast\SN) \simeq \bfE f^{-1}Sol_Y^\rmE(\SN).\]

\item[\rm{(iii)}] Let $f : X\to Y$ be a morphism of complex manifolds
and $\SM\in\BDC_{\rm good}(\SD_X)\cap\BDC_{\rm hol}(\SD_X)$.
If $\supp(\SM)$ is proper over Y then there 
is an isomorphism in $\BEC(\I\CC_Y)$
\[Sol_Y^\rmE({\bfD} f_\ast\SM)[d_Y] \simeq 
\bfE f_\ast Sol_X^\rmE(\SM )[d_X].\]

\item[\rm{(iv)}] For $\SM_1, \SM_2\in\BDC_{\rm hol}(\SD_X)$, 
there exists an isomorphism in $\BEC(\I\CC_X)$
\[Sol_X^\rmE(\SM_1\Dotimes\SM_2)\simeq Sol_X^\rmE(\SM_1)
\Potimes Sol_X^\rmE(\SM_2).\]

\item[\rm{(v)}] If $\SM\in\BDC_{\rm hol}(\SD_X)$ and $D\subset X$
is a closed hypersurface, then there 
are isomorphisms in $\BEC(\I\CC_X)$
\begin{align*}
Sol_X^\rmE(\SM(\ast D)) &\simeq \pi^{-1}
\CC_{X\bs D}\otimes Sol_X^\rmE(\SM),\\
DR_X^\rmE(\SM(\ast D)) &\simeq \rihom(
\pi^{-1}\CC_{X\bs D}, DR_X^\rmE(\SM)).
\end{align*}

\item[\rm{(vi)}] Let $D$ be a closed hypersurface in $X$ and 
$f\in\SO_X(\ast D)$ a meromorphic function along $D$.
Then there exists an isomorphism in $\BEC(\I\CC_X)$
\[Sol_X^\rmE \big( \SE_{X \setminus D | X}^\varphi\big) 
\simeq \EE_{X\setminus D | X}^{\Re\varphi}.\]
\end{enumerate}
\end{theorem}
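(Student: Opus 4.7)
The plan is to reduce all six assertions to known properties of the tempered holomorphic ind-sheaf $\SO_X^\rmt$ by exploiting the presentation
\[\SO_X^\rmE \simeq i^!\rihom_{\SD_\PP}(\SE_{\CC|\PP}^\tau, \SO_{X\times\PP}^\rmt)[2]\]
recorded just above the statement. This identification converts enhanced operations on $X$ into ordinary ind-sheaf operations on the bordered space $X \times \RR_\infty$, at the cost of an exponential twist by the affine coordinate $\tau$ on $\PP$, after which one can invoke the tempered Riemann-Hilbert package of \cite{KS01} together with the refinements of \cite{DK16}.

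For assertion (i), I would combine the isomorphism $Sol_X(\SM)[d_X] \simeq DR_X(\DD_X\SM)$ recalled earlier with an enhanced self-duality of the form $\rmD_X^\rmE(\SO_X^\rmE) \simeq \Omega_X^\rmE[d_X]$. Unpacking the definitions of $DR_X^\rmE$ and $Sol_X^\rmE$ then yields (i) after an application of standard biduality for $\SD_X$-modules. Assertions (ii), (v) and (vi) reduce in a similar spirit: (v) is the essentially topological statement that meromorphic localization along $D$ merely restricts the tempered solutions to $X\setminus D$, with the enhanced $t$-variable playing no role; (ii) follows from functoriality of $\SO^\rmt$ under holomorphic pullback together with the construction of $\bfE f^{-1}$ via the morphism of bordered spaces $\tl f \colon X \times \RR_\infty \to Y \times \RR_\infty$; and (vi) is a direct computation, noting that solutions of the meromorphic connection $\SE_{X\setminus D | X}^\varphi$ are generated by $e^{-\varphi}$, whose tempered growth along the $t$-direction is controlled precisely by the half-space $\{t + \Re\varphi \geq 0\}$, matching the definition of $\EE_{X\setminus D | X}^{\Re\varphi}$.

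For assertion (iv), I would establish the compatibility via an external tensor product argument. On $X \times X$ one checks that $Sol_{X\times X}^\rmE(\SM_1 \Dboxtimes \SM_2)$ decomposes as the external convolution of the factors by inspecting the behaviour of $\SO_{X\times X}^\rmE$ with respect to the partial de Rham functors in each variable; pulling back along the diagonal $\Delta\colon X \hookrightarrow X \times X$ and using (ii) together with the Kashiwara-Künneth formula then delivers the convolution identity on $X$. For (iii), one proceeds by transferring the question to the bordered space $X \times \RR_\infty$ and invoking the enhanced projection formula, combined with the well-known fact that under the properness and goodness hypotheses $\bfD f_\ast$ preserves good holonomicity.

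The main obstacle is precisely (iii): controlling the interaction between the proper direct image of a good holonomic $\SD_X$-module and the tempered distribution sheaf $\mathcal{D}b_{X_\RR}^\rmT$ on the source requires a delicate Dolbeault-type resolution by tempered forms together with a Grauert-style coherence statement in the enhanced ind-sheaf setting. It is here that both the properness of $\supp(\SM)$ over $Y$ and the goodness assumption enter essentially, since they are what allow one to interchange proper direct image with the tempered growth conditions packaged inside $\SO_X^\rmE$. Once this coherence-type input is secured, the rest of (iii) is a diagram chase through the bordered-space definitions of $\bfE f_\ast$ and the Dolbeault presentations of $DR_X^\rmE$ and $Sol_X^\rmE$.
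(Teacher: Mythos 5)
The paper does not prove this theorem. Immediately before the statement it says ``We recall the following results of \cite{DK16},'' and all six assertions are quoted from D'Agnolo--Kashiwara and used later as a black box, not re-derived. There is therefore no proof in the paper against which to check your sketch; for the purposes of this paper you should simply cite \cite{DK16}.

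As an independent sketch of how \cite{DK16} establish these facts, yours is in the right spirit but imprecise in two places. For (i), ``standard biduality for $\SD_X$-modules'' is not the relevant input, and the display $\rmD_X^\rmE(\SO_X^\rmE)\simeq\Omega_X^\rmE[d_X]$ is not well-typed as written: $\SO_X^\rmE$ carries a left $\SD$-module structure, $\Omega_X^\rmE$ a right one, and $\rmD_X^\rmE$ is Verdier duality of enhanced ind-sheaves, not of module objects. The actual content is the compatibility $\rmD_X^\rmE\circ DR_X^\rmE\simeq DR_X^\rmE\circ\DD_X$ on $\BDC_{\rm hol}(\SD_X)$, which is itself a nontrivial theorem resting on the tempered duality of \cite{KS01} and \cite{KS03} together with the Dolbeault presentation of $\SO_X^\rmE$; combining it with $DR_X^\rmE(\DD_X\SM)\simeq Sol_X^\rmE(\SM)[d_X]$ gives (i). For (iii) you correctly locate the crux, but ``the enhanced projection formula'' on its own will not close the argument: the proof in \cite{DK16} reduces, via the $\SE_{\CC|\PP}^{\pm\tau}$ twist on $X\times\PP$, to the tempered proper direct-image theorem of Kashiwara--Schapira for good $\SD$-modules proper over the base, and that coherence-type input is precisely what the goodness and properness hypotheses buy you. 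Your readings of (ii), (iv), (v) and (vi) are consistent with the source, up to the sign bookkeeping in (vi) where the solution $e^{-\varphi}$ must be combined with the $e^{-\tau}$ factor built into $\SO_X^\rmE$ to produce the half-space $\{t+\Re\varphi\geq 0\}$.
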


Finally, we recall the following fundamental theorem of \cite{DK16}.

\begin{theorem}[{\cite[Theorem 9.5.3 (Irregular 
Riemann-Hilbert Correspondence)]{DK16}}]\label{cor-4}
The enhanced solution functor on a complex manifold $X$ 
\begin{align}
Sol_X^\rmE\colon\BDChol(\SD_X)^{\op}
\longrightarrow\rcEC^{\rmb}(\rmI\CC_X)
\end{align}
is fully faithful. 
\end{theorem}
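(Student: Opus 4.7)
The plan is to reduce the fully faithfulness assertion to a reconstruction theorem for a single holonomic $\SD_X$-module. Define a candidate inverse functor
\[
\Phi\colon\rcEC^{\rmb}(\rmI\CC_X)\longrightarrow\BDChol(\SD_X)^{\op},\qquad K\longmapsto \rhom^\rmE(K,\SO_X^\rmE)[d_X],
\]
where the left $\SD_X$-module structure on the target is inherited from the one on $\SO_X^\rmE$. The key claim is that for every $\SM\in\BDChol(\SD_X)$ the natural morphism $\SM\longrightarrow\Phi(Sol_X^\rmE(\SM))$ is an isomorphism. Granting this, fully faithfulness follows formally from the tensor-hom adjunction for $\SO_X^\rmE$ viewed as a $(\SD_X,\SD_X)$-bimodule in enhanced ind-sheaves: chaining the reconstruction for $\SM_2$ with adjunction produces a functorial isomorphism between $\rHom_{\SD_X}(\SM_1,\SM_2)$ and $\rHom^\rmE(Sol_X^\rmE(\SM_2),Sol_X^\rmE(\SM_1))$ for any $\SM_1,\SM_2\in\BDChol(\SD_X)$.

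To establish the reconstruction isomorphism, I would proceed by a local-to-global devissage. First, applying Theorem \ref{thm-4}(iii) to closed embeddings and using the standard distinguished triangles associated to a smooth Whitney stratification of $\supp(\SM)$, one reduces to the case where $\SM$ is a meromorphic connection along a closed hypersurface $D\subset X$, i.e.\ $\SM\simeq\SM(\ast D)$ with $\SM|_{X\setminus D}$ an integrable connection. Next, combining Hironaka's desingularization with the Kedlaya--Mochizuki theorem on the resolution of turning points, there exists a proper modification $\pi\colon\tl{X}\to X$, possibly followed by a ramified cover, such that $\pi^{-1}(D)$ is a normal crossing divisor and $\pi^\ast\SM$ has a quasi-normal form along it. The compatibility of $Sol_X^\rmE$ with proper direct and inverse images (Theorem \ref{thm-4}(ii)(iii)), together with an analogous compatibility of $\Phi$ deduced from the duality isomorphism of Theorem \ref{thm-4}(i) and base change properties of $\SO_X^\rmE$, reduces the reconstruction statement on $X$ to its analogue on $\tl{X}$ for $\pi^\ast\SM$.

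In the quasi-normal form case, after a further local ramified change of coordinates and a formal decomposition, $\pi^\ast\SM$ splits as a direct sum of terms of the form $\SE^{f_i}_{\tl{X}\setminus\tl{D}|\tl{X}}\Dotimes\mathcal{R}_i$ with $\mathcal{R}_i$ regular holonomic. By Theorem \ref{thm-4}(iv)(v)(vi) we have
\[
Sol_{\tl{X}}^\rmE\bigl(\SE^{f_i}_{\tl{X}\setminus\tl{D}|\tl{X}}\Dotimes\mathcal{R}_i\bigr)\simeq \EE^{\Re f_i}_{\tl{X}\setminus\tl{D}|\tl{X}}\Potimes Sol_{\tl{X}}^\rmE(\mathcal{R}_i),
\]
so the reconstruction on each summand is reduced, on the one hand, to the classical Kashiwara--Mebkhout regular Riemann--Hilbert correspondence applied to $\mathcal{R}_i$, and on the other to a direct computation of $\rhom^\rmE(\EE^{\Re f}_{U|X},\SO_X^\rmE)$ using the presentation $\SO_X^\rmE\simeq i^!\rihom_{\SD_\PP}(\SE^{\tau}_{\CC|\PP},\SO_{X\times\PP}^\rmt)[2]$ recalled in the preliminaries.

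The hard part is the descent step: one must verify that the reconstruction isomorphism constructed on $\tl{X}$ descends through the proper modification $\pi$ and the ramified cover to yield an isomorphism on $X$. This requires a careful analysis of how $\SO_X^\rmE$ itself (not only its Grothendieck operations) behaves under blow-ups and finite covers, and it is precisely here that one must work directly with the enhanced tempered distributions $\mathcal{D}b_{X_\RR}^{\T}$ entering the definition of $\SO_X^\rmE$, rather than merely the formal properties listed in Theorem \ref{thm-4}. Equally delicate is verifying that $\Phi$ actually takes objects of $\rcEC^{\rmb}(\rmI\CC_X)$ to $\BDChol(\SD_X)^{\op}$, which amounts to an enhanced analogue of Kashiwara's constructibility theorem for holonomic $\SD$-modules.
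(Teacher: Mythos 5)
The paper simply cites \cite[Theorem 9.5.3]{DK16} for this result and gives no proof of its own, so your proposal must be measured against D'Agnolo--Kashiwara's original argument. The outline you give---deduce full faithfulness from a reconstruction theorem $\SM\simto\rihom^\rmE(Sol_X^\rmE(\SM),\SO_X^\rmE)$, run a devissage to reduce to meromorphic connections, pull back to quasi-normal form via Kedlaya--Mochizuki, then compute for exponential objects---does track the broad structure of their proof, and the adjunction argument passing from reconstruction to full faithfulness is correct in spirit. (Small caveats: as defined in this paper $\rhom^\rmE$ lands in $\BDC(\CC_X)$ rather than $\BDC(\SD_X)$, so you need the $\SD_X$-module-valued version of $\rihom^\rmE(-,\SO_X^\rmE)$ obtained from the enhanced $\SD_X$-module structure on $\SO_X^\rmE$, and the $[d_X]$ shift you insert is not present in DK16's reconstruction statement.)

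However, your key reduction step contains a genuine error. You assert that in quasi-normal form ``$\pi^\ast\SM$ splits as a direct sum of terms of the form $\SE^{f_i}_{\tl{X}\setminus\tl{D}|\tl{X}}\Dotimes\mathcal{R}_i$ with $\mathcal{R}_i$ regular holonomic.'' This is false: Definitions \ref{def-A} and \ref{def-B} only give a decomposition of the associated moderate-growth module $\SM^\SA$ on a \emph{small neighborhood $W$ of a boundary point $\theta$ of the real oriented blow-up}, and correspondingly $Sol_X^\rmE(\SM)$ decomposes into exponential enhanced ind-sheaves only \emph{sectorially}, as recorded in \eqref{mon-eq}. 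The obstruction to upgrading the sectorial decomposition to a global one is precisely the Stokes phenomenon. The actual DK16 argument must therefore establish the reconstruction isomorphism sector by sector and then glue across overlapping sectors, the gluing being controlled by rigidity statements such as Lemma \ref{DK-lem-2}, which identifies Hom's between exponential enhanced sheaves and the corresponding enhanced ind-sheaves on a sector. If you replace the sectorial decomposition by a global direct sum, the Stokes data is discarded and the argument only yields the formal (Turrittin-level) statement, so the chain of reductions does not close. Two minor further points: the ``possibly followed by a ramified cover'' clause is redundant, since ramification is already built into the definition of quasi-normal form (Definition \ref{def-B}); and your concern that the descent step requires analyzing $\mathcal{D}b_{X_\RR}^{\T}$ directly is overstated---once the compatibilities of Theorem \ref{thm-4}(ii)(iii)(v) and the adjoint compatibilities for $\rihom^\rmE(-,\SO_X^\rmE)$ are in place, the descent through a proper modification is a formal adjunction exercise; the analytic content is already packaged into those statements.
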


\subsection{Constructible functions and 
constructible sheaves}\label{subsec:7}
In this subsection we recall some basic notions 
on constructible functions and their relationship 
with constructible sheaves. For a complex analytic 
space $X$ we denote by $\Dbc (\CC_X) \subset \BDC(\CC_X)$ 
the full triangulated subcategory of $\BDC(\CC_X)$ 
consisting of constructible objects. 
For an abelian group $G$ and a complex analytic space $X$,  
we shall say that a $G$-valued function 
$\varphi :X \longrightarrow G$ on $X$ is constructible if 
there exists a stratification $X=\bigsqcup_{\alpha}X_{\alpha}$ of $X$ 
such that $\varphi |_{X_{\alpha}}$ is 
constant for any $\alpha$. We denote by $\CF_G(X)$ 
the abelian group of $G$-valued constructible functions on $X$. 
In this paper, we consider $\CF_G(X)$ only for 
the additive group $G= \ZZ$. 
For a $G$-valued constructible function $\varphi \colon X \longrightarrow G$ on 
a complex analytic space $X$, by taking 
a stratification $X=\bigsqcup_{\alpha}X_{\alpha}$ of 
$X$ such that $\varphi |_{X_{\alpha}}$ is 
constant for any $\alpha$, we set 
\begin{equation}
\int_X \varphi :=\sum_{\alpha}\chi(X_{\alpha}) \cdot 
\varphi (x_{\alpha}) \quad \in G, 
\end{equation}
where $\chi ( \cdot )$ stands for the 
topological Euler characteristic and $x_{\alpha}$ is a reference point in 
the stratum $X_{\alpha}$. 
By the following lemma $\int_X \varphi \in G$ does not depend on the choice of 
the stratification $X=\bigsqcup_{\alpha} X_{\alpha}$ of $X$. 
We call it the topological (or Euler) integral of 
$\varphi$ over $X$. 

\begin{lemma}\label{lem:2-str-1}
Let $Y$ be a complex analytic space and 
$Y=\bigsqcup_{\alpha}Y_{\alpha}$ a stratification of $Y$. 
Then we have 
\begin{equation}
\chi_{{\rm c}}(Y) = \dsum_{\alpha} \chi_{{\rm c}}(Y_{\alpha}), 
\end{equation}
where $\chi_{{\rm c}}( \cdot )$ stands for the Euler 
characteristic with compact supports. Moreover, 
for any $\alpha$ we have 
$\chi_{{\rm c}}(Y_{\alpha})=\chi (Y_{\alpha})$. 
\end{lemma}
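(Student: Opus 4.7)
My plan is to establish the two claims separately. The first, additivity of $\chi_{{\rm c}}$ over a stratification, is formal and depends only on general sheaf theory. The second, the equality $\chi_{{\rm c}}(Y_\alpha) = \chi(Y_\alpha)$, requires the complex-analytic hypothesis and is where the real work lies.

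For additivity, the standard tool is the open--closed excision sequence. Given a decomposition $Y = U \sqcup Z$ with $U$ open and $Z$ closed, the distinguished triangle $j_{!}\CC_U \to \CC_Y \to i_{\ast}\CC_Z \xrightarrow{+1}$ (with $j\colon U \hookrightarrow Y$ and $i\colon Z \hookrightarrow Y$) gives, after applying $\rmR\Gamma_{{\rm c}}(Y;\,\cdot\,)$, a long exact sequence
\[\cdots \to H^k_{{\rm c}}(U;\CC) \to H^k_{{\rm c}}(Y;\CC) \to H^k_{{\rm c}}(Z;\CC) \to H^{k+1}_{{\rm c}}(U;\CC) \to \cdots,\]
from which $\chi_{{\rm c}}(Y) = \chi_{{\rm c}}(U) + \chi_{{\rm c}}(Z)$. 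I would then induct on the (finite) number of strata. Under the frontier condition implicit in the notion of an analytic stratification, the union of strata of maximal real dimension is open in $Y$, so at each step one obtains a splitting $Y = U \sqcup Z$ with $U$ a union of strata and $Z$ stratified by strictly fewer pieces, to which the inductive hypothesis applies.

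For the equality $\chi_{{\rm c}}(Y_\alpha) = \chi(Y_\alpha)$, I would argue by induction on $\dim_{\CC} Y_\alpha$. The base case $\dim = 0$ is immediate. For the inductive step, Hironaka-type resolution in the analytic category produces a compactification $\overline{Y_\alpha}$ which is a compact complex analytic space containing $Y_\alpha$ as an open dense subset and whose complement $D := \overline{Y_\alpha} \setminus Y_\alpha$ is a closed complex analytic subset of strictly lower dimension. Compactness of $\overline{Y_\alpha}$ gives $\chi_{{\rm c}}(\overline{Y_\alpha}) = \chi(\overline{Y_\alpha})$, while Lojasiewicz's triangulation theorem equips $\overline{Y_\alpha}$ with a finite simplicial structure having $D$ as a subcomplex, whence the ordinary $\chi$-additivity $\chi(\overline{Y_\alpha}) = \chi(Y_\alpha) + \chi(D)$ via the CW-pair long exact sequence. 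Combining this with the additivity from the first part applied to $\overline{Y_\alpha} = Y_\alpha \sqcup D$ and with the inductive hypothesis $\chi_{{\rm c}}(D) = \chi(D)$ delivers the desired identity.

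The principal obstacle is the second part: one must construct a genuine analytic compactification of a possibly singular stratum and then transport information between $\chi$ and $\chi_{{\rm c}}$ in a controlled manner. The complex-analytic hypothesis is essential (the assertion fails for arbitrary topological spaces), and ultimately the argument hinges on the triangulability of complex analytic pairs, which renders all the Euler characteristics in sight computable from a common finite combinatorial datum.
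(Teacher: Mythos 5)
Your first part --- additivity of $\chi_{\mathrm{c}}$ over a stratification via the open--closed distinguished triangle $j_!\CC_U \to \CC_Y \to i_\ast\CC_Z \xrightarrow{+1}$ and induction on the (finite) number of strata --- is standard and correct; the paper states the lemma without proof, so there is no official argument to compare against, but yours is the expected one. The issue is in the second part.

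The step in which you invoke ``the ordinary $\chi$-additivity $\chi(\overline{Y_\alpha}) = \chi(Y_\alpha)+\chi(D)$ via the CW-pair long exact sequence'' is not a true statement of topology, and fixing it reveals the argument to be circular. For a finite CW pair $(X,A)$, the cellular count (or equivalently the long exact sequence of the pair, using $H^k(X,A)\cong H^k_{\mathrm{c}}(X\setminus A)$ when $X$ is compact) gives
\begin{equation*}
\chi(X) \;=\; \chi_{\mathrm{c}}(X\setminus A) + \chi(A),
\end{equation*}
with \emph{compactly supported} Euler characteristic on the open complement, not the ordinary one. Already for $X=[0,1]$, $A=\{0,1\}$, $U=(0,1)$ one has $\chi(X)=1$, $\chi(A)=2$, $\chi(U)=1$, and $1\neq 2+1$; the correct identity is $1 = \chi_{\mathrm{c}}((0,1)) + 2 = -1 + 2$. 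Applied to your situation, the triangulation argument gives $\chi(\overline{Y_\alpha}) = \chi_{\mathrm{c}}(Y_\alpha) + \chi(D)$. Feeding this into $\chi(\overline{Y_\alpha}) = \chi_{\mathrm{c}}(\overline{Y_\alpha}) = \chi_{\mathrm{c}}(Y_\alpha)+\chi_{\mathrm{c}}(D)$ only recovers $\chi(D)=\chi_{\mathrm{c}}(D)$, i.e.\ the induction hypothesis, and yields nothing about $Y_\alpha$. In other words, the identity you wanted to extract from the triangulation is \emph{equivalent} to $\chi_{\mathrm{c}}(Y_\alpha)=\chi(Y_\alpha)$, so the argument assumes what it sets out to prove.

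A correct and much shorter route is to use that each stratum $Y_\alpha$ of a complex analytic stratification is a smooth complex manifold, hence an orientable $\RR$-manifold of even real dimension $2n_\alpha$. Poincar\'e duality then gives $\dim H^k_{\mathrm{c}}(Y_\alpha;\CC)=\dim H^{2n_\alpha-k}(Y_\alpha;\CC)$ for all $k$, and since $(-1)^{2n_\alpha-k}=(-1)^k$ one concludes $\chi_{\mathrm{c}}(Y_\alpha)=\chi(Y_\alpha)$ directly, with no compactification, no resolution, and no induction. (Equivalently, one may cite the general fact that $\chi_{\mathrm{c}}=\chi$ for spaces whose links are odd-dimensional, which holds for arbitrary complex analytic sets, but for the strata themselves the Poincar\'e duality argument is all that is needed.) This also removes the reliance on Hironaka-type resolution, which was doing no real work in your sketch.
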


More generally, 
for any morphism $\rho \colon Z \longrightarrow W$ of complex analytic spaces 
and any $G$-valued constructible function 
$\varphi \in \CF_G(Z)$ on $Z$, we define the 
push-forward $\int_{\rho} \varphi \in \CF_G(W)$ of 
$\varphi$ by 
\begin{equation}
\Bigl( \int_{\rho} \varphi \Bigr) (w) :=\int_{\rho^{-1}(w)} \varphi \qquad 
(w \in W). 
\end{equation}
We thus obtain a homomorphism 
\begin{equation}
\int_{\rho} : \CF_G(Z) \longrightarrow \CF_G(W)
\end{equation}
of abelian groups. 
Let $X$ be a complex analytic space and $F \in \Dbc (\CC_X)$ 
a constructible object on it. Then it is easy to see that 
the $\ZZ$-valued function $\chi_X(F): X \longrightarrow \ZZ$ on 
$X$ defined by 
\begin{equation}
\chi_X(F)(x):= \dsum_{j \in \ZZ} (-1)^j \dim (H^jF)_x 
\qquad (x \in X) 
\end{equation}
is constructible. For a complex analytic space $X$ 
we define the Grothendieck group $\Kbc (\CC_X)$ of 
$\Dbc (\CC_X)$ to be the quotient of the free abelian group 
generated by the objects of $\Dbc (\CC_X)$ by the 
relations 
\begin{equation}
F=F^{\prime}+ F^{\prime \prime} \qquad 
(F^{\prime} \longrightarrow F \longrightarrow F^{\prime \prime}
 \overset{+1}{\longrightarrow} \ \text{is 
a distinguished triangle}). 
\end{equation}
Since for a distinguished triangle 
$F^{\prime} \longrightarrow F \longrightarrow F^{\prime \prime}
 \overset{+1}{\longrightarrow}$ 
in $\Dbc (\CC_X)$ we have 
$\chi_X(F)=\chi_X(F^{\prime})+ \chi_X(F^{\prime \prime})$, 
we thus obtain a group homomorphism 
\begin{equation}
\chi_X: \Kbc (\CC_X) \longrightarrow \CF_{\ZZ}(X). 
\end{equation}
The following lemma is well-known to the specialists 
(see e.g. Kashiwara-Schapira \cite[Theorem 9.7.1]{KS90}). 

\begin{lemma}\label{lem:comut-1}
Let $\rho \colon Z \longrightarrow W$ be a proper 
morphism of complex analytic spaces. 
Then the diagram 
\begin{equation}\label{comm-dia}
\begin{CD}
\Kbc (\CC_Z)   @>{\chi_Z}>> \CF_{\ZZ}(Z)
\\
@V{\Phi ( \rho )}VV   @VV{\int_{\rho}}V
\\
\Kbc (\CC_W) @>{\chi_W}>>  \CF_{\ZZ}(W)
\end{CD}
\end{equation}
commutes, where $\Phi ( \rho ) : \Kbc (\CC_Z) 
\longrightarrow \Kbc (\CC_W)$ is the morphism 
induced by the direct image functor 
$\rmR \rho_\ast: \Dbc (\CC_Z) 
\longrightarrow \Dbc (\CC_W)$. 
\end{lemma}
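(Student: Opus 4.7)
The plan is to verify the commutativity on generators of $\Kbc(\CC_Z)$. Both vertical maps are well-defined group homomorphisms (the left one because $\rmR\rho_\ast$ is triangulated; the right one by construction, since the stalk Euler characteristic is additive on distinguished triangles), so it suffices to prove
\[
\chi_W\bigl(\rmR\rho_\ast F\bigr)=\int_\rho \chi_Z(F)
\]
for a single object $F\in\Dbc(\CC_Z)$, and then to check this equality of $\ZZ$-valued constructible functions pointwise at each $w\in W$.

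To evaluate the left-hand side at $w\in W$, I would invoke the proper base change theorem: since $\rho$ is proper, there is a natural isomorphism $(\rmR\rho_\ast F)_w\simeq \rmR\Gamma\bigl(\rho^{-1}(w); F|_{\rho^{-1}(w)}\bigr)$, and the compactness of the fiber identifies this with $\rmR\Gamma_{\rm c}\bigl(\rho^{-1}(w); F|_{\rho^{-1}(w)}\bigr)$. Hence
\[
\chi_W(\rmR\rho_\ast F)(w)=\chi_{{\rm c}}\bigl(\rho^{-1}(w);F|_{\rho^{-1}(w)}\bigr).
\]

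Next, I would fix a complex analytic Whitney stratification $Z=\bigsqcup_\alpha Z_\alpha$ adapted simultaneously to $F$ (so that each cohomology sheaf $H^jF|_{Z_\alpha}$ is locally constant, with stalk Euler characteristic equal to $\chi_Z(F)(x_\alpha)$ at a chosen reference point $x_\alpha\in Z_\alpha$) and to the proper map $\rho$ (so that each intersection $\rho^{-1}(w)\cap Z_\alpha$ is a locally closed analytic subset of $Z_\alpha$). By the standard additivity of $\chi_{{\rm c}}$ over locally closed analytic stratifications, together with the product formula for a locally constant sheaf on a locally closed analytic subset, one gets
\[
\chi_{{\rm c}}\bigl(\rho^{-1}(w);F|_{\rho^{-1}(w)}\bigr)=\sum_\alpha\chi_{{\rm c}}\bigl(\rho^{-1}(w)\cap Z_\alpha\bigr)\cdot\chi_Z(F)(x_\alpha).
\]
Lemma \ref{lem:2-str-1}, applied to the induced complex analytic stratification of $\rho^{-1}(w)$, replaces each $\chi_{{\rm c}}$ by $\chi$. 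The right-hand side of the target identity, evaluated at $w$ via the same stratification (on which $\chi_Z(F)$ is constant equal to $\chi_Z(F)(x_\alpha)$), is by definition the same sum $\sum_\alpha \chi(\rho^{-1}(w)\cap Z_\alpha)\cdot\chi_Z(F)(x_\alpha)$. Comparing the two expressions yields the claim.

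The main obstacle is purely technical rather than conceptual: one has to produce a complex analytic stratification of $Z$ that is simultaneously adapted to $F$ and to $\rho$, and to invoke both proper base change and the additivity of $\chi_{{\rm c}}$ for constructible sheaves. Once these standard ingredients are in place, the two sides of the diagram collapse term by term to the same finite sum and the commutativity is immediate.
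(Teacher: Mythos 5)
Your proof is correct. The paper does not actually prove this lemma; it simply declares it well-known and refers to Kashiwara-Schapira \cite[Theorem 9.7.1]{KS90}. Your proposal reconstructs the standard argument behind that reference: proper base change identifies $(\rmR\rho_\ast F)_w$ with $\rmR\Gamma(\rho^{-1}(w); F|_{\rho^{-1}(w)})$, additivity of $\chi_{\mathrm{c}}$ over a complex analytic stratification adapted to both $F$ and $\rho$ plus the rank formula for local systems (applied to each cohomology sheaf $H^jF|_{Z_\alpha}$) gives the term-by-term sum, and Lemma \ref{lem:2-str-1} converts $\chi_{\mathrm{c}}$ to $\chi$ on the strata so the result matches the definition of $\int_{\rho}$. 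No gaps; this is exactly the textbook proof being silently invoked by the paper.
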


\section{Solution complexes to irregular holonomic D-modules 
with a quasi-normal form}\label{sec:qNor}

In this section, we study the enhanced and usual solution complexes to 
irregular holonomic D-modules with a quasi-normal form 
in the sense of Mochizuki \cite[Chapter 5]{Moc11}. First of all, 
as a prototype of our study, we have the following result. 

\begin{proposition}\label{prop-1}
Let $X \subset \CC^n$ be a neighborhood of the origin 
$0 \in \CC^n$ and $z=(z_1, \ldots, z_n)$ the standard coordinate of 
$X \subset \CC^n$ and for $1 \leq l \leq n$ set 
$D:= \{ z \in X \subset \CC^n \ | \ z_1 \cdots z_l=0 \} 
\subset X$ and $U:= X \setminus D$. For a 
holomorphic function $c:X \longrightarrow \CC$ on $X$ 
such that $c(z) \not= 0$ for any $z \in X$ and non-negative 
integers $k_1, k_2, \ldots, k_l \in \ZZ_{\geq 0}$ we 
define a meromorphic function $\varphi$ on $X$ having 
a pole along the normal crossing divisor $D \subset X$ by 
\begin{equation} 
\varphi (z):= \frac{c(z)}{z_1^{k_1} z_2^{k_2} \cdots z_l^{k_l}} 
\qquad (z \in U=X \setminus D). 
\end{equation} 
For $a\gg0$ we set
\begin{equation} 
U_a :=  \{z \in U = X \setminus D  \ |\  
{\Re}(\varphi(z)) < a\}  \ \subset X \subset \CC^n.
\end{equation} 
Then we have the following results. 
\begin{enumerate}
\item[\rm{(i)}] 
Assume that $l=1$. Then we have isomorphisms
\begin{equation} 
H^jSol_X^\rmt\big(\SE_{U | X}^{\varphi}\big)
\simeq
\begin{cases}
``\underset{a\to +\infty}{\varinjlim}"\ \CC_{U_a} & (j=0)\\
\\
\hspace{30pt} \CC_{D}^{\oplus k_1} & (j=1)\\
\\
\hspace{35pt}0 & (\mbox{\rm otherwise}).
\end{cases}
\end{equation} 
\item[\rm{(ii)}] 
Assume that $l \geq 2$. Then we have an isomorphism
\begin{equation} 
H^0 Sol_X^\rmt\big(\SE_{U | X}^{\varphi}\big)
\simeq
``\underset{a\to +\infty}{\varinjlim}"\ \CC_{U_a} 
\end{equation} 
and $H^j Sol_X^\rmt (\SE_{U | X}^{\varphi})$ 
($j \geq 1$) are usual sheaves supported by 
the normal crossing divisor $D \subset X$. Moreover, 
they are constructible with respect to the standard 
stratification of $D$ and we have isomorphisms 
\begin{equation} 
H^jSol_X^\rmt\big(\SE_{U | X}^{\varphi}\big)_0
\simeq
\begin{cases}
\CC^{d \cdot \binom{l-1}{j-1}} & (1 \leq j \leq l)\\
\\
0 & (\mbox{\rm otherwise}), 
\end{cases}
\end{equation}
where we set 
\begin{equation} 
d:= 
\begin{cases}
{\rm gcd}(k_1,k_2, \ldots, k_l) & (k_1 k_2 \cdots k_l \not= 0)\\
\\
\hspace{35pt}0 & (\mbox{\rm otherwise}).
\end{cases}
\end{equation} 
\end{enumerate}
\end{proposition}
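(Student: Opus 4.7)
The plan is to exploit the irregular Riemann--Hilbert framework and reduce the computation to a topological analysis of an enhanced ind-sheaf. By Theorem \ref{thm-4}(vi) and the identity $Sol_X^\rmt(\M) \simeq i_0^!\,\bfR^\rmE\,Sol_X^\rmE(\M)$ recalled in Subsection \ref{sec:7}, we have
\begin{equation*}
Sol_X^\rmt(\SE_{U|X}^{\varphi}) \simeq i_0^!\,\bfR^\rmE\,\EE_{U|X}^{\Re\varphi},
\end{equation*}
and by the definition of $\EE_{U|X}^{\Re\varphi}$ the right-hand side is represented, before taking the quotient, by the explicit ind-sheaf $``\underset{a\to+\infty}{\varinjlim}"\,\CC_{\{(z,t)\in X\times\var{\RR}\,:\,z\in U,\,t+\Re\varphi(z)\geq a\}}$. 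Unfolding $\bfR^\rmE = \Prihom(\CC_{\{t\geq 0\}}\oplus\CC_{\{t\leq 0\}},\,\cdot\,)$ and tracing through the convolution product together with the restriction $i_0^!$ at $\{t=0\}$, the result splits naturally into a \emph{tempered} contribution in degree $0$, namely $``\underset{a\to+\infty}{\varinjlim}"\,\CC_{U_a}$ (which yields the formula for $H^0$ in both (i) and (ii)), and a \emph{residual} contribution concentrated on $D$ in strictly positive degrees. The residual part arises by applying an essentially constructible construction to the boundary data of $\Re\varphi$ along $D$, so it is a usual (non-ind) sheaf which is constructible with respect to the standard stratification of $D$, establishing the support, sheaf character and constructibility of $H^{\geq 1}$ claimed in (ii).

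For part (i), the divisor $D=\{z_1=0\}$ is smooth and $c$ is non-vanishing at $0$, so the residual term is a parametrized version of the one-variable computation of \cite[Proposition 5.4.5]{DK18}. Writing $z_1 = re^{i\theta}$, the set $\{\Re\varphi<a\}$ meets a small neighbourhood of $D$ in a union of $k_1$ disjoint open angular sectors in $\theta$ determined by $\cos(k_1\theta-\arg c(0))<0$; these sectors are in bijection with the $k_1$ summands of $\CC_D^{\oplus k_1}$ that contribute to $H^1$, while the higher $H^j$ vanish for dimensional reasons, since the link of $D$ in $X$ is one-dimensional.

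For part (ii), the same reduction identifies the stalk of $H^j Sol_X^\rmt(\SE_{U|X}^{\varphi})$ at $z\in D$ with the cohomology of an appropriate exit-link of $D$ in $X$ near $z$, up to a degree shift; away from the origin this reduces inductively to case (i) applied to lower-dimensional strata, so the interesting case is $z=0$. Setting $z_j = r_j e^{i\theta_j}$ and using the dominant behaviour
\begin{equation*}
\Re\varphi(z)\sim r_1^{-k_1}\cdots r_l^{-k_l}\,\Re\bigl(c(0)\,e^{-i(k_1\theta_1+\cdots+k_l\theta_l)}\bigr),
\end{equation*}
the exit-link is governed by the subset of the real torus $(S^1)^l$ where $\cos\bigl(\sum_j k_j\theta_j + \arg c(0)\bigr)\leq 0$. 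When all $k_j>0$, the linear character $\Theta\colon (S^1)^l\to S^1$, $(\theta_j)\mapsto\sum_j k_j\theta_j$, has fibres diffeomorphic to a disjoint union of $d=\gcd(k_1,\ldots,k_l)$ sub-tori of dimension $l-1$, and a Mayer--Vietoris / Künneth computation of the cohomology of the bad region then produces the Betti numbers $\binom{l-1}{j-1}$ of $(S^1)^{l-1}$ in each degree $j$, each multiplied by the factor $d$ coming from the connected components of the fibre. When some $k_j=0$, $\varphi$ is independent of $z_j$ and Theorem \ref{thm-4}(v) applied to the hypersurface $\{z_j=0\}$ implies that the residual term vanishes on that hypersurface, which is consistent with the convention $d=0$ in the statement. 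The main obstacle will be precisely this last step: turning the heuristic torus decomposition into a rigorous stalk computation of $i_0^!\,\bfR^\rmE\,\EE_{U|X}^{\Re\varphi}$, since one must simultaneously control the filtered colimit $``\varinjlim_a"$ and the Verdier-type functor $i_0^!$ on a non-constructible enhanced ind-sheaf, and correctly match the degree shift relating the torus-fibre cohomology to the ambient $Sol_X^\rmt$.
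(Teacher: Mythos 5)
Your overall strategy matches the paper's: both reduce $Sol_X^\rmt(\SE_{U|X}^\varphi)$ via $i_0^!\,\bfR^\rmE$ and Theorem \ref{thm-4}(vi) to a stalk computation, pass to polar coordinates, and recognize that the answer is controlled by the subset of the torus $(S^1)^l$ where $\cos(\arg c + \sum_j k_j\theta_j)$ has a fixed sign. Your description of the fibration $\Theta\colon(S^1)^l\to S^1$ with fibre $\sqcup_d(S^1)^{l-1}$ is also exactly how the appendix organizes the torus cohomology, and your sector count for part (i) agrees with \cite[Proposition 5.4.5]{DK18}.

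However, the step you flag as "the main obstacle" is not a small gap; it is the actual technical content of the proof. The paper resolves it by an explicit, elementary manoeuvre that your sketch does not anticipate. First, one reduces the stalk at $0$ to $\rmR\Gamma(X;\CC_{K^*_b})$ for the explicit polydisc-type set $K^*_b=\{z\in (D(0;\e)^*)^l\times D(0;\e)^{n-l}:\Re\varphi(z)\geq -b\}$, by tracing $\rihom(\CC_{\{t\geq 0\}},\,\cdot\,)$ through the filtered colimit and pushing forward along $\var\pi$ (the $\{t\leq 0\}$ part is shown to die). Second, and crucially, one passes to the real oriented blow-up $\varpi\colon\tl X\to X$ along $D$, forms the short exact sequence
\begin{equation*}
0\longrightarrow \CC_{K^*_b}\longrightarrow \CC_{\overline{K^*_b}}\longrightarrow \CC_{T^*_b}\longrightarrow 0,\qquad T^*_b=\overline{K^*_b}\cap\varpi^{-1}(D),
\end{equation*}
in $\tl X$, and identifies $\overline{K^*_b}$ (resp.\ $T^*_b$) up to homotopy with $(S^1)^l$ (resp.\ with the subset of $(S^1)^l$ where the cosine is $\geq 0$, which retracts onto the subtorus $T_{k_1,\ldots,k_l}$). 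Only then does Proposition \ref{prop-appA} on $H^*((S^1)^l;\CC_{W_{k_1,\ldots,k_l}})$ apply. Without the blow-up and this exact sequence, there is no rigorous bridge from the ind-sheaf stalk to the torus. Your phrase "cohomology of an appropriate exit-link... up to a degree shift" gestures at this but leaves the degree bookkeeping and the filtered colimit unjustified; these are precisely the places where an error is easy to make.

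A second, more concrete issue: your treatment of the case $k_j=0$ is not correct as stated. Theorem \ref{thm-4}(v) only restates that $\SE^\varphi_{U|X}$ is already localized along $D$ (hence along $\{z_j=0\}\subset D$), which gives no new vanishing. The paper instead observes that when some $k_j=0$, the set $K^*_b$ factors as $(K^*_b)_{\mathrm{red}}\times (S^1\times(0,\e])$ near $\{z_j=0\}$, and $\rmR\Gamma(\RR;\CC_{(0,\e]})\simeq 0$ kills $\rmR\Gamma(X;\CC_{K^*_b})$ by K\"unneth. This is a direct topological argument, not a consequence of the localization functoriality, and it is what makes the convention $d=0$ in that case work. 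You should replace the appeal to Theorem \ref{thm-4}(v) with this (or an equivalent) computation.
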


\begin{proof}
Since (i) is well-known and follows immediately also 
from (the proof of) \cite[Proposition 3.14]{IT20}, 
we prove only (ii). The proof of (ii) is 
similar to that of \cite[Proposition 3.14]{IT20}, 
but we need more careful attention. We prove 
only the assertions in (ii) on a neighborhood of the 
origin $0 \in X \subset \CC^n$, as we will see that 
the other parts can be proved similarly. Then, 
after a suitable change of coordinates, shrinking 
$X$ if necessary, we may assume also that the 
holomorphic function $c(z)$ is a non-zero 
constant and denote it by $c \in \CC^*:= \CC 
\setminus \{ 0 \}$ for simplicity. 
We set $\SN:=\SE_{U | X}^{\varphi}\in\Modhol(\SD_{X})$.
Recall that by (the proof of) 
\cite[Lemma 3.13]{IT20} we have isomorphisms
\begin{equation}
Sol_X^\rmt(\SN) \simeq
i_0^!\bfR^\rmE Sol_X^\rmE(\SN)
\simeq
\rmR\pi_\ast\rihom
(\CC_{\{t\geq0\}}\oplus\CC_{\{t\leq0\}}, Sol_X^\rmE(\SN)).
\end{equation}
Moreover by Theorem \ref{thm-4} (vi) 
there exists also an isomorphism
\begin{equation}
Sol_X^\rmE(\SN)
\simeq
``\underset{a\to +\infty}{\varinjlim}"
\ \CC_{\{t\geq-{\Re}\varphi+a\}}.
\end{equation}
For any sufficiently large $a\gg0$ we can easily show that
\begin{equation}
\rhom(\CC_{\{t\geq0\}}, \CC_{\{t\geq-{\Re}\varphi+a\}})
\simeq
\rmR\Gamma_{\{t\geq0\}}\CC_{\{t\geq-{\Re}\varphi+a\}}
\simeq
 \CC_{\{t>0,\ t\geq-{\Re}\varphi+a\}}.
\end{equation}
Similarly for $a\gg0$ we have
\begin{equation}
\rhom(\CC_{\{t \leq0 \}}, \CC_{\{t\geq-{\Re}\varphi+a\}})
\simeq
 \CC_{\{t<0,\ t\geq-{\Re}\varphi+a\}}. 
\end{equation}
Let $\var{\pi} : X\times\var{\RR}\to X$ be the projection
and $j : X\times \RR\xhookrightarrow{\ \ \ }
 X\times \var{\RR}$ the inclusion map. 
Then for $a\gg0$ it is easy to see that 
\begin{align*}
& \rmR\pi_\ast\rihom
( \CC_{\{t\leq0\}},  \CC_{\{t\geq-{\Re}\varphi+a\}} )
\\
&\simeq
\rmR\var{\pi}_\ast\rhom_{\CC_{X\times\var{\RR}}}(
\CC_{X\times \RR}, \CC_{\{t<0,\ t\geq-{\Re}\varphi+a\}})
\\
&\simeq
\rmR\var{\pi}_\ast \rmR j_\ast 
\CC_{\{t<0,\ t\geq-{\Re}\varphi+a\}} \simeq 0. 
\end{align*}
We thus obtain an isomorphism 
\begin{equation}
 \rmR\pi_\ast\rihom
( \CC_{\{t\leq0\}},  Sol_X^\rmE(\SN) ) \simeq 0. 
\end{equation}
For $a\gg0$ let us calculate 
\begin{equation}
\rmR\pi_\ast\rihom
( \CC_{\{t\geq0\}},  \CC_{\{t\geq-{\Re}\varphi+a\}} ) 
\simeq \rmR\var{\pi}_\ast \rmR j_\ast 
\CC_{\{t>0,\ t\geq-{\Re}\varphi+a\}}. 
\end{equation}
The stalk of this complex at a point $x \in X$ 
is isomorphic to $\CC$ (resp. $0$) if 
$x \in U_a$ (resp. $x \in X \setminus U_a$). 
Moreover the stalk at the origin $0 \subset X= \CC^n$ is isomorphic to 
\begin{equation}
\rmR\Gamma( \{ 0 \} \times\var{\RR}; 
\rmR j_\ast\CC_{\{t>0,\ t\geq-{\Re}\varphi+a\}})
\simeq 
( \rmR j_\ast\CC_{\{t>0,\ t\geq-{\Re}\varphi+a\}})_{(0, + \infty )}. 
\end{equation}
Then it suffices to calculate the cohomology groups of 
the complex $( \rmR j_\ast\CC_{\{t>0,\ t\geq-{\Re}\varphi+a\}})_{(0, + \infty )}$. 
For a sufficiently small $0< \e \ll 1$ 
we set $D(0; \e ):= \{ \tau \in \CC \ | \ | \tau | \leq \e \}$, 
$D(0; \e )^*:= \{ \tau \in \CC \ | \ 0<| \tau | \leq \e \}$ and 
\begin{equation}
K^*:= \big( D(0; \e )^* \big)^l \times \big( D(0; \e ) \big)^{n-l}
\ \subset \ K:= \big( D(0; \e ) \big)^n \qquad \subset X \subset \CC^n. 
\end{equation}
For a sufficiently large $b \gg 0$ 
we set also 
\begin{equation}
K^*_b:= \{ z \in K^* \ | \ {\Re}(\varphi(z))\geq -b \}
\qquad \subset K^*
\end{equation}
and 
\begin{equation}
L^*_{a+b}:= \{ (z,t) \in X \times \overline{\RR} \ | \ 
b+a \leq t < + \infty, z \in K^*, t \geq - {\Re}(\varphi(z))+a \} 
\ \subset X \times \overline{\RR}
\end{equation}
so that under the natural identification 
$\{ t= a+b \} := X \times \{ a+b \} \simeq X$ we have 
\begin{equation}
L^*_{a+b} \cap \{ t= a+b \} \simeq K^*_b. 
\end{equation}
For $0< \e \ll 1$ and $b \gg 0$ there exists an isomorphism 
\begin{equation}
( \rmR j_\ast\CC_{\{t>0,\ t\geq-{\Re}\varphi+a\}})_{(0, + \infty )}
\simeq \rmR \Gamma ( X \times \RR ; \CC_{L^*_{a+b}}) 
\end{equation}
and $L^*_{a+b}$ is homotopic to 
$L^*_{a+b} \cap \{ t= a+b \} \simeq K^*_b$. 
We fix such $0< \e \ll 1$ and $b \gg 0$ once and for all. 
Then our remaining task is to calculate the cohomology groups of 
the complex $\rmR \Gamma ( X  ; \CC_{K^*_b})$. 
For this purpose, let $\varpi: \widetilde{X} \longrightarrow X$ 
be the real oriented blow-up of $X$ along the normal 
crossing divisor $D \subset X$ and by the isomorphism 
$\varpi^{-1}(U) \simto U$ induced by $\varpi$ regard 
$K^*_b \subset U= X \setminus D$ as a locally 
closed subset of $\widetilde{X}$. We denote by 
$\overline{K^*_b} \subset \widetilde{X}$ the closure of 
$K^*_b$ in $\widetilde{X}$ and set 
\begin{equation}
T^*_b:= \overline{K^*_b} \setminus K^*_b = 
\overline{K^*_b} \cap \varpi^{-1}(D). 
\end{equation}
Then we obtain an exact sequence 
\begin{equation}\label{exa-seq}
0 \longrightarrow \CC_{K^*_b} \longrightarrow 
\CC_{\overline{K^*_b}} \longrightarrow 
\CC_{T^*_b} \longrightarrow 0 
\end{equation}
of $\RR$-constructible sheaves on $\widetilde{X}$. 
First, let us consider the case where 
$k_1 k_2 \cdots k_l=0$ i.e. there exist $1 \leq i \leq l$ 
such that $k_i=0$. Then for 
the subanalytic subset $(K^*_b)_{{\rm red}}:= 
K_b^* \cap \{ z_i= \delta \}$ ($0< \delta < \e$) we see 
that $K^*_b$ is isomorphic to 
$(K^*_b)_{{\rm red}} \times (S^1 \times (0,\e ])$. 
Since we have $\rmR \Gamma ( \RR  ; \CC_{(0,\e ]})
\simeq 0$, by the K{\"u}nneth formula we thus 
obtain the desired vanishing 
\begin{equation}
 \rmR \Gamma ( X  ; \CC_{K_b^*}) \simeq 0. 
\end{equation}
So, we may assume that $k_1 k_2 \cdots k_l \not= 0$ i.e. 
for any $1 \leq i \leq l$ we have $k_i>0$. 
Take $R>0$ and $\lambda \in \RR$ such that 
$c=R e^{\sqrt{-1} \lambda }$ and for each $1 \leq i \leq l$ 
set $z_i=r_i e^{\sqrt{-1} \theta_i}$ 
($r_i >0, \theta_i \in \RR$). Then for $z \in K^* 
\subset U= X \setminus D$ we have 
\begin{equation}
 {\Re}(\varphi(z)) = 
R r_1^{-k_1} \cdots r_l^{-k_l} 
\cos ( \lambda + k_1 \theta_1 + \cdots + k_l \theta_l). 
\end{equation}
This implies that the condition ${\Re}(\varphi(z))\geq -b$ 
for $z=(r_1 e^{\sqrt{-1} \theta_1}, \ldots, 
r_l e^{\sqrt{-1} \theta_l}, z_{l+1}, \ldots, z_n) \in 
K^*$ is equivalent to the one 
\begin{equation}
r_1^{k_1} \cdots r_l^{k_l} \geq - \frac{R}{b} 
\cos ( \lambda + k_1 \theta_1 + \cdots + k_l \theta_l). 
\end{equation}
Note that this condition is satisfied 
for any $r_1, \ldots, r_l \geq 0$ if and only 
if $\cos ( \lambda + k_1 \theta_1 + \cdots + k_l \theta_l) 
\geq 0$. In particular, for the complex submanifold 
$Y:= \{ z \in X= \CC^n \ | \ z_1= \cdots =z_l=0 \} 
\subset X= \CC^n$ such that $Y \subset D$ and the 
closed subset 
\begin{equation}
 \varpi^{-1}(Y) \ ( \simeq (S^1)^l \times Y) 
= 
\{ (e^{\sqrt{-1} \theta_1}, \ldots, e^{\sqrt{-1} \theta_l}, z_{l+1}, \ldots, z_n) 
 \ | \ 
\theta_i \in \RR, z_i \in \CC \} 
\end{equation}
of $\varpi^{-1}(D) \subset \widetilde{X}$ we have 
\begin{align*}
& \overline{K^*_b} \cap \varpi^{-1}(Y) 
\\ 
& = 
\{ (e^{\sqrt{-1} \theta_1}, \ldots, e^{\sqrt{-1} \theta_l}, z_{l+1}, \ldots, z_n) 
 \ | \ 
\theta_i \in \RR, |z_i| \leq \e, 
\cos ( \lambda + k_1 \theta_1 + \cdots + k_l \theta_l) \geq 0 \}.
\end{align*}
Moreover, we can easily see that $T^*_b= 
\overline{K^*_b} \cap \varpi^{-1}(D)$ is 
homotopic to $\overline{K^*_b} \cap \varpi^{-1}(0) 
\subset \varpi^{-1}(0) \simeq (S^1)^l$. 
On the other hand, we see also that the closed 
subset $\overline{K^*_b} \subset \widetilde{X}$ is 
homotopic to $\varpi^{-1}(Y)$ and hence to 
$\varpi^{-1}(0) \simeq (S^1)^l$. By the exact 
sequence \eqref{exa-seq} and Proposition \ref{prop-appA}, we thus 
obtain the desired isomorphisms 
\begin{equation} 
H^j(X; \CC_{K_b^*} )
\simeq
\begin{cases}
\CC^{d \cdot \binom{l-1}{j-1}} & (1 \leq j \leq l)\\
\\
0 & (\mbox{\rm otherwise}).
\end{cases}
\end{equation}
This completes the proof.
\end{proof}

\begin{remark}
Let $\SS$ be the standard stratification of the 
normal crossing divisor $D \subset X$ in Proposition 
\ref{prop-1}. Then, although the holomorphic 
solutions to $\SE_{U | X}^{\varphi}$ on 
$U=X \setminus D \subset X$ are 
constant multiples of the single-valued function 
$e^{\varphi (z)}$ and hence $Sol_X (\SE_{U | X}^{\varphi})|_U$
is a constant sheaf on $U$, by the proof of Proposition 
\ref{prop-1} we see that for some strata $S \in \SS$ in 
$\SS$ and $j \in \ZZ$ the local systems 
$H^jSol_X (\SE_{U | X}^{\varphi})|_S$ on $S$ may not be 
constant i.e. have some non-trivial monodromies. 
\end{remark} 

Note that some special cases of Proposition \ref{prop-1} were 
obtained in Kashiwara-Schapira 
\cite[Proposition 7.3 and Remark 7,4]{KS03} 
and Ito-Takeuchi \cite[Proposition 3.14]{IT20}. 
By Proposition \ref{prop-1} and the isomorphism 
$\alpha_X Sol_X^\rmt 
(\SE_{U | X}^{\varphi}) 
\simeq Sol_X (\SE_{U | X}^{\varphi})$ we obtain the 
following corollary. 

\begin{corollary}\label{cor-1}
In the situation of Proposition \ref{prop-1} 
we have the following results. 
\begin{enumerate}
\item[\rm{(i)}] 
If $l=1$ we have 
\begin{equation} 
\dim H^jSol_X \big(\SE_{U | X}^{\varphi}\big)_0
=
\begin{cases}
k_1 & (j=1)\\
\\
0 & (\mbox{\rm otherwise})
\end{cases}
\end{equation} 
and hence 
\begin{equation} 
\chi \bigl(
Sol_X \big(\SE_{U | X}^{\varphi}\big)
\bigr)(0)= -k_1. 
\end{equation} 
\item[\rm{(ii)}] 
If $l \geq 2$ we have 
\begin{equation} 
\dim H^jSol_X \big(\SE_{U | X}^{\varphi}\big)_0
=
\begin{cases}
d \cdot \binom{l-1}{j-1} & (1 \leq j \leq l)\\
\\
0 & (\mbox{\rm otherwise})
\end{cases}
\end{equation} 
and hence 
\begin{equation} 
\chi \bigl(
Sol_X \big(\SE_{U | X}^{\varphi}\big)
\bigr)(0)=0. 
\end{equation} 
\end{enumerate}
\end{corollary}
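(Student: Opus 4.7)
The plan is to deduce the corollary directly from Proposition \ref{prop-1} by invoking the general isomorphism $\alpha_X Sol_X^\rmt(\SN) \simeq Sol_X(\SN)$ recalled before the statement and then taking stalks at the origin $0 \in D$. Since $\alpha_X$ is exact, it is enough to apply $\alpha_X$ termwise to each of the cohomology ind-sheaves $H^j Sol_X^\rmt(\SE_{U|X}^\varphi)$ computed in Proposition \ref{prop-1}, evaluate at $0$, and then take the alternating sum of the resulting dimensions to get the Euler--Poincar\'e index.

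First I would treat case (i). By Proposition \ref{prop-1} (i), only $H^0$ and $H^1$ are nonzero, and $H^0 \simeq ``\varinjlim" \CC_{U_a}$, $H^1 \simeq \CC_D^{\oplus k_1}$. Applying $\alpha_X$ to the former produces an ordinary sheaf whose stalk at $x \in X$ is $\varinjlim_a (\CC_{U_a})_x$. Since $0 \in D$, we have $0 \notin U_a$ for any $a$, so the stalk vanishes at the origin. The term $\CC_D^{\oplus k_1}$ is already a usual sheaf and its stalk at $0 \in D$ is $\CC^{k_1}$. This gives the claimed dimensions and, by definition of the local Euler--Poincar\'e index, $\chi(Sol_X(\SE_{U|X}^\varphi))(0) = -k_1$.

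For case (ii), the argument is the same. The stalk at $0$ of $\alpha_X(``\varinjlim" \CC_{U_a})$ again vanishes since $0 \in D$, while for $1 \leq j \leq l$ the sheaves $H^j Sol_X^\rmt(\SE_{U|X}^\varphi)$ are already ordinary constructible sheaves on $D$ by Proposition \ref{prop-1} (ii), so $\alpha_X$ acts as the identity on them and their stalks at $0$ are $\CC^{d\binom{l-1}{j-1}}$. The Euler--Poincar\'e index is then computed by the standard binomial identity
\begin{equation}
\chi\bigl(Sol_X(\SE_{U|X}^\varphi)\bigr)(0) = \sum_{j=1}^{l} (-1)^j d\binom{l-1}{j-1} = -d \sum_{i=0}^{l-1}(-1)^i \binom{l-1}{i} = 0,
\end{equation}
where the last equality uses $(1-1)^{l-1}=0$ for $l \geq 2$.

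There is essentially no obstacle here: the only point that requires a line of justification is the vanishing of the stalk of $\alpha_X(``\varinjlim" \CC_{U_a})$ at the origin, which follows from exactness of $\alpha_X$ and its compatibility with filtrant inductive limits (as recorded in the commutativity table of Section \ref{sec:3}). Everything else is a direct transcription of Proposition \ref{prop-1} combined with the binomial identity above.
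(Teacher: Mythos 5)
Your proof is correct and follows the same route as the paper, which simply observes that the corollary follows from Proposition \ref{prop-1} together with the isomorphism $\alpha_X Sol_X^\rmt(\SE_{U|X}^\varphi) \simeq Sol_X(\SE_{U|X}^\varphi)$; you have merely spelled out the details (exactness of $\alpha_X$, its commutation with filtrant inductive limits, the vanishing of the stalk of $\varinjlim_a \CC_{U_a} \simeq \CC_U$ at $0 \in D$, and the binomial identity) that the paper leaves implicit.
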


From now, we shall extend Corollary \ref{cor-1} to 
holonomic D-modules with a quasi-normal form 
in the sense of Mochizuki \cite[Chapter 5]{Moc11}. 
First of all, we recall some notions and results in \cite[\S 7]{DK16}. 
Let $X$ be a complex manifold and $D \subset X$ a 
normal crossing divisor in it. Denote by 
$\varpi_X : \tl{X}\to X$ the real oriented blow-up of $X$ along 
$D$ (sometimes we denote it simply by $\varpi$). 
Then we set
\begin{align*}
\SO_{\tl{X}}^\rmt &:= \rhom_{\varpi^{-1}
\SD_{\overline{X}}}(\varpi^{-1}\SO_{\overline{X}}, 
\mathcal{D}b_{\tl{X_\RR}}^\rmt),\\
\SA_{\tl{X}} &:= \alpha_{\tl{X}}\SO_{\tl{X}}^\rmt,\\
\SD_{\tl{X}}^\SA &:= \SA_{\tl{X}}\otimes_{\varpi^{-1}\SO_X}
\varpi^{-1}\SD_X, 
\end{align*}
where $\mathcal{D}b_{\tl{X}}^\rmt$ stands for the ind-sheaf 
of tempered distributions on $\tl{X}$ 
(for the definition see \cite[Notation 7.2.4]{DK16}). 
Recall that a section of $\SA_{\tl X}$ is a holomorphic function having
moderate growth at $\varpi_X^{-1}(D)$.
Note that $\SA_{\tl{X}}$ and 
$\SD_{\tl{X}}^\SA$ are sheaves of rings on $\tl{X}$. 
For $\SM\in\BDC(\SD_X)$ we define 
an object $\SM^{\SA}\in\BDC(\SD_{\tl{X}}^\SA)$ by 
\begin{align*}
\SM^{\SA} &:= \SD_{\tl{X}}^\SA\Lotimes
{\varpi^{-1}\SD_X}\varpi^{-1}\SM
\simeq\SA_{\tl{X}}
\Lotimes{\varpi^{-1}\SO_X}\varpi^{-1}\SM.
\end{align*}
Note that if $\SM$ is a holonomic $\SD_X$-module such that 
$\SM\simto\SM(\ast D)$ and 
${\rm sing.supp} (\SM)\subset D$,
then one has $\SM^\SA\simeq
\SD_{\tl{X}}^\SA\otimes_{\varpi^{-1}\SD_X}\varpi^{-1}\SM$
(see \cite[Lemma 7.3.2]{DK16}). 
Let us take local coordinates 
$(u,v)=(u_1, \ldots, u_l, v_1, \ldots, v_{n-l})$ 
of $X$ such that $D= \{ u_1 u_2 \cdots u_l=0 \}$. 
We define a partial order $\leq$ on the 
set $\ZZ^l$ by 
\[ a=(a_1, \ldots, a_l) \leq a^{\prime}=(a^{\prime}_1, \ldots, a^{\prime}_l) 
 \ \Longleftrightarrow 
\ a_i \leq a_i^{\prime} \ (1 \leq i \leq l).\] 
Then for a meromorphic function $\varphi\in\SO_X(\ast D)$
on $X$ having a pole along $D$ by using its Laurent expansion 
\[ \varphi = \sum_{a \in \ZZ^l} c_a( \varphi )(v) \cdot 
u^a \ \in \SO_X(\ast D) \]
with respect to $u_1, \ldots, u_{l}$
we define its order 
${\rm ord}( \varphi ) \in \ZZ^l$ to be 
the minimum 
\[ \min \Big( \{ a \in \ZZ^l \ | \
c_a( \varphi ) \not= 0 \} \cup \{ 0 \} \Big) \]
if it exists. In \cite[Chapter 5]{Moc11} 
Mochizuki defined the notion of 
good sets of irregular values on $(X,D)$ to be 
finite subsets $S \subset \SO_X(\ast D)/ \SO_X$ such that 
\begin{enumerate}[noitemsep]
\item [\rm (i)] ${\rm ord}( \varphi )$ exists for any 
$\varphi \in S$ and if $\varphi \not= 0$ then its  
leading term $c_{{\rm ord}( \varphi )} ( \varphi ) (v)$ 
does not vanish 
at any point $v \in Y:= \{ u_1= \cdots =u_l=0 \} 
\subset D$. 
\item [\rm (ii)] ${\rm ord}( \varphi - \psi )$ exists for any 
$\varphi \not= \psi$ in $S$ and then ${\rm ord}( \varphi - \psi ) 
\in \ZZ^l_{\leq 0} \setminus \{ 0 \}$ and the  
leading term $c_{{\rm ord}( \varphi - \psi )} ( \varphi - \psi ) (v)$ 
does not vanish 
at any point $v \in Y= \{ u_1= \cdots =u_l=0 \} 
\subset D$. 
\item [\rm (iii)] the subset $\{ {\rm ord}( \varphi - \psi ) \ | \ 
\varphi, \psi \in S, \varphi \not= \psi \} \subset \ZZ^l$  
is totally ordered with respect to the order $\leq$ on $\ZZ^l$. 
\end{enumerate}
\begin{definition}\label{def-A}
Let $X$ be a complex manifold and $D \subset X$ a 
normal crossing divisor in it. 
Then we say that a holonomic $\SD_X$-module 
$\SM$ has a normal form along $D$ if
\begin{enumerate}[noitemsep]
\item [\rm (i)] $\SM\simto\SM(\ast D)$
\item [\rm (ii)] $\rm{sing.supp}(\SM)\subset D$
\item [\rm (iii)] for any $\theta\in\varpi^{-1}(D)\subset\tl{X}$,
there exist an open neighborhood $U\subset X$
of $\varpi({\theta}) \in D$ in $X$, a good set 
$S= \{ [ \varphi_1], [ \varphi_2], \ldots, 
[ \varphi_k] \} \subset \SO_X(\ast D)/ \SO_X$ 
($\varphi_i \in \SO_X(\ast D)$) 
of irregular values on $(U, D \cap U)$, 
positive integers 
$m_i >0$ ($1 \leq i \leq k$) and 
an open neighborhood $W$ of $\theta$ 
with $W \subset\varpi^{-1}(U)$
such that
\begin{equation}
\SM^\SA|_W
\simeq
\bigoplus_{i=1}^k 
\Bigl( \bigl(\SE_{U\bs D|U}^{\varphi_i}\bigr)^\SA |_W
\Bigr)^{\oplus m_i} .
\end{equation}   
\end{enumerate}
\end{definition}

By \cite[Proposition 3.19]{IT20} the good set 
$S \subset \SO_X(\ast D)/ \SO_X$ of 
irregular values for $\SM$ 
in this definition does not depend on 
the point $\theta \in \varpi^{-1}(D)$. Moreover 
by \cite[Proposition 3.5]{IT20} for any 
$\theta \in \varpi^{-1}(D \cap U)$ there exists its  
sectorial open neighborhood $V \subset U \setminus D$ 
such that 
\begin{equation}
\pi^{-1} \CC_V \otimes 
Sol_X^\rmE ( \SM ) 
\simeq
\bigoplus_{i=1}^k 
\Bigl(  \EE_{V|X}^{\Re \varphi_i} 
\Bigr)^{\oplus m_i}.
\end{equation}

A ramification of $X$ along the normal crossing divisor 
$D \subset X$ on a neighborhood $U$ 
of $x \in D$ is a finite map $ \rho : X' \longrightarrow U$
of complex manifolds 
of the form $w \longmapsto 
z=(z_1,z_2, \ldots, z_n)= 
 \rho (w) = (w^{d_1}_1,\ldots, w^{d_l}_l, w_{l+1},\ldots, w_n)$ 
for some $(d_1, \ldots, d_l)\in (\ZZ_{>0})^l$, where 
$(w_1,\ldots, w_n)$ is a local coordinate system of $X'$ and 
$(z_1, \ldots, z_n)$ is that of 
$U$ such that $D \cap U=\{z_1\cdots z_l=0\}$. 

\begin{definition}\label{def-B}
Let $X$ be a complex manifold and $D \subset X$ a 
normal crossing divisor in it. 
Then we say that a holonomic $\SD_X$-module $\SM$ has a 
quasi-normal form along $D$ if it satisfies 
the conditions (i) and (ii) of Definition \ref{def-A},
and if for any point $x \in D$ 
there exists a ramification $\rho  : X'\to U$ 
on a neighborhood $U$ of it such that $\bfD \rho^\ast(\SM|_U)$
has a normal form along the normal crossing divisor 
$\rho^{-1}(D\cap U)$.
\end{definition}

Note that $\bfD \rho^\ast(\SM|_U)$ as well 
as $\bfD \rho_\ast\bfD \rho^\ast(\SM|_U)$
is concentrated in degree zero and $\SM|_U$ is a 
direct summand of $\bfD \rho_\ast\bfD \rho^\ast(\SM|_U)$. 
Now let $\SM$ be a holonomic $\SD_X$-module having a 
quasi-normal form along the normal crossing divisor 
$D \subset X$. Then for any point $x \in D$ 
there exists a ramification $\rho  : X'\to U$ 
on a neighborhood $U$ of it such that $\bfD \rho^\ast(\SM|_U)$
has a normal form along the normal crossing divisor 
$D^{\prime} :=\rho^{-1}(D\cap U) \subset X^{\prime}$. 
Note that $\rho^{-1}(x) \subset D^{\prime}$ is a point and 
denote it by $x^{\prime}$. 
Let $\varpi^{\prime}: 
\widetilde{X^{\prime}} \to X^{\prime}$ be the real 
oriented blow-up of $X^{\prime}$ along 
$D^{\prime}$ and $\tl{\rho}: \widetilde{X^{\prime}} \to 
\tl{X}$ the morphism induced by $\rho$. 
Then by \cite[Propositions 3.5 and 3.19]{IT20} 
there exist a unique good set 
$S= \{ [ \varphi_1], [ \varphi_2], \ldots, 
[ \varphi_k] \} \subset \SO_{X^{\prime}}(\ast D^{\prime})
/ \SO_{X^{\prime}}$ 
($\varphi_i \in  \SO_{X^{\prime}}(\ast D^{\prime})$) 
of irregular values on a neighborhood of 
$x^{\prime} \in D^{\prime}$ in $X^{\prime}$ and 
positive integers 
$m_i >0$ ($1 \leq i \leq k$) such that 
for any $\theta^{\prime} \in ( \varpi^{\prime})^{-1}
(D^{\prime})$ and 
its sufficiently small sectorial open neighborhood 
$V^{\prime} \subset X^{\prime} \setminus D^{\prime}$ 
we have an isomorphism 
\begin{equation}
\pi^{-1} \CC_{V^{\prime}} \otimes 
Sol_{X^{\prime}}^\rmE ( \bfD \rho^\ast(\SM|_U) ) 
\simeq
\bigoplus_{i=1}^k 
\Bigl(  \EE_{V^{\prime}|X^{\prime}}^{\Re \varphi_i} 
\Bigr)^{\oplus m_i}.
\end{equation}
For a point $\theta\in\varpi^{-1}(D \cap U)$ and 
its sufficiently small sectorial open neighborhood 
$V \subset U \setminus D$ we take a point 
$\theta^{\prime} \in ( \varpi^{\prime})^{-1}
(D^{\prime})$ such that $\tl{\rho} ( \theta^{\prime}) = 
\theta$ and its sectorial open neighborhood 
$V^{\prime} \subset X^{\prime} \setminus D^{\prime}$ 
such that $\rho |_{V^{\prime}} : V^{\prime} \simto V$. 
Define holomorphic functions 
$f_i: V \to \CC$ ($1 \leq i \leq k$) by 
$f_i:= \varphi_i \circ ( \rho |_{V^{\prime}})^{-1}$. 
Then by \cite[Proposition 3.5]{IT20} we obtain 
an isomorphism 
\begin{equation}\label{mon-eq} 
\pi^{-1} \CC_V \otimes 
Sol_X^\rmE ( \SM ) 
\simeq
\bigoplus_{i=1}^k 
\Bigl(  \EE_{V|X}^{\Re f_i} 
\Bigr)^{\oplus m_i}.
\end{equation}
As $\tl{\rho}: \widetilde{X^{\prime}} \to 
\tl{X}$ is locally an isomorphism, then it is 
also clear that on an open neighborhood $W$ of $\theta$ 
in $\tl{X}$ we have an isomorphism 
\begin{equation}\label{mono-eqn} 
\SM^\SA|_W
\simeq
\bigoplus_{i=1}^k 
\Bigl( \bigl(\SE_{U\bs D|U}^{f_i}\bigr)^\SA |_W
\Bigr)^{\oplus m_i} .
\end{equation}
Moreover, in \cite[Lemma 7.4]{Tak22} we proved the 
following result. 

\begin{lemma}\label{DK-lem-2}
In the situation as above, 
there exists a  
sectorial open neighborhood $V \subset U \setminus D$ 
of $\theta \in \varpi^{-1}(D \cap U)$ 
such that for any $1 \leq i,j \leq k$ the natural morphism 
\begin{equation}
{\rm Hom}^\rmE ({\rm E}_{V|M}^{\Re f_i}, 
{\rm E}_{V|M}^{\Re f_j}) 
\longrightarrow 
{\rm Hom}^\rmE ( \EE_{V|X}^{\Re f_i}, \EE_{V|X}^{\Re f_j}) 
\end{equation}
is an isomorphism. 
\end{lemma}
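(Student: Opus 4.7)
Proof plan:

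By bi-additivity of $\Hom^\rmE$, it suffices to fix indices $1 \leq i, j \leq k$ and show that the natural map is an isomorphism for this single pair. The natural map is induced by applying the functor $\CC_X^\rmE \Potimes (-)$, using the identification $\EE_{V|X}^{\Re f} \simeq \CC_X^\rmE \Potimes {\rm E}_{V|X}^{\Re f}$ which implements the passage from enhanced sheaves to enhanced ind-sheaves.

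Next, I would exploit the good-set structure to shrink $V$. Since the functions $\{f_i\}$ are obtained from the good set $\{\varphi_i\}$ on $X'$ via the ramification $\rho$, and since axioms (ii)--(iii) of a good set (Definition \ref{def-A}) guarantee that each nonzero difference $\varphi_i - \varphi_j$ has a non-vanishing leading monomial along $Y$, the functions $f_i - f_j$ inherit this dominance property on $V$. Using the finiteness of the index set and the total ordering in (iii), one can shrink $V$ in the angular (sectorial) directions so that for every pair $i \ne j$ with $f_i \ne f_j$, the real part $\Re(f_i - f_j)$ tends uniformly to $+\infty$ or uniformly to $-\infty$ as $V \ni z \to D$, with a single sign over all of $V$ simultaneously.

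Having chosen such a $V$, I would compute both Hom groups case by case. In the diagonal case $i = j$, both sides are identified via the idempotency $\CC_X^\rmE \Potimes \CC_X^\rmE \simeq \CC_X^\rmE$ together with the defining formula for $\rihom^\rmE$, giving the same value on each side. In an off-diagonal case with $\Re(f_j - f_i) \to -\infty$ uniformly, a support analysis of the closed sets $\{t + \Re f_i \geq 0\}$ and $\{t + \Re f_j \geq 0\}$ shows that both Hom groups vanish: no non-trivial section of $\rihom$ survives the passage to $\rmR \pi_\ast$ over the sectorial boundary. In the opposite off-diagonal case, both Hom groups again identify with a common value and the natural map is manifestly an identification. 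In each case the natural map respects the identifications.

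The main obstacle is the geometric sign control in Step 2: one must use the ramification structure together with the good-set hypothesis to guarantee that the leading monomial of $f_i - f_j$ dominates on a suitably small sectorial $V$, so that $\Re(f_i - f_j)$ acquires a definite asymptotic sign that is uniform across $V$. Once this geometric step is handled, the remaining Hom computations in Step 3 are standard enhanced ind-sheaf manipulations along the lines of \cite{DK16} and \cite{IT20}.
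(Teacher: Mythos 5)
Your Step~2 fails at points $\theta$ on a Stokes locus, and the lemma must hold there too. After passing to the ramification, the leading term of $f_i-f_j$ near $\theta$ has the form $c\,u_1^{-a_1}\cdots u_l^{-a_l}$ with $c\neq 0$ and $a\in\QQ_{\geq 0}^l\setminus\{0\}$, so the asymptotic sign of $\Re(f_i-f_j)$ as one approaches $D$ is controlled by $\cos(\arg c + a_1\theta_1+\cdots+a_l\theta_l)$. This vanishes along a real-codimension-one subset of $\varpi^{-1}(D)$, and for $\theta$ on that set no amount of angular shrinking of a sectorial neighborhood $V$ produces a uniform sign, since the Stokes locus meets every sector around $\theta$. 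Because the lemma is invoked (in the proof of Proposition~\ref{hda-dk}, following \cite[Proposition 5.4.5]{DK18}) precisely to glue the local models over the entire boundary $\varpi^{-1}(D\cap U)$, the Stokes-line points cannot be excluded, and your case analysis has no branch that covers them.

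Even where uniform sign holds, Step~3 asserts rather than proves the comparison, and that comparison is the actual content of the lemma. The map is $\CC_X^\rmE\Potimes(-)$, which is not fully faithful, so the claimed identifications (e.g.\ the diagonal case ``via idempotency'', or the simultaneous vanishing in one off-diagonal case) must be checked against explicit formulas for $\Hom^\rmE(\rmE_{V|X}^\psi,\rmE_{V|X}^\phi)$ and $\Hom^\rmE(\EE_{V|X}^\psi,\EE_{V|X}^\phi)$ as in \cite{KS16} and \cite{DK18}. The paper itself does not reproduce the proof (it cites \cite[Lemma 7.4]{Tak22}), but the working argument in the one-dimensional model \cite[Proposition 5.4.5]{DK18} compares the fixed description of the enhanced-sheaf $\Hom$ with the colimit over $a$ of cohomologies of the sublevel sets $\{\Re(f_j-f_i)\le a\}\cap V$ that computes the ind-$\Hom$, using only the tameness of the good-set geometry and not a global sign; you will need a higher-dimensional version of that comparison, which in particular survives the Stokes locus.
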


In order to improve \eqref{mon-eq} and obtain a 
higher-dimensional analogue of \cite[Proposition 5.4.5]{DK18}, 
let us prepare some notations (see \cite[Section 5]{DK18} 
for the details in the one dimensional case).  For 
the real oriented blow-up 
$\varpi : \tl{X}\to X$ of $X$ along the normal crossing divisor 
$D \subset X$ consider the following commutative diagram
\begin{equation}
\vcenter{
\xymatrix@M=5pt{
\varpi^{-1}(D) \ar[r]^-{\tl{\imath}} & \tl{X} \ar[d]^-{\varpi}& \\
X\bs D \ar[r]^-{j} \ar[ur]^-{\tl{\jmath}} & X, 
}}\end{equation}
where $\tl{\imath},\tl{\jmath},j$ 
are the natural embeddings. For an open subset $\Omega\subset\tl{X}$, 
$f\in \Gamma(\Omega;\tl{\jmath}_{\ast} 
j^{-1}\SO_X) \simeq \Gamma( \tl{\jmath}^{-1} (\Omega);
\SO_{X \setminus D})$ and $\theta \in \Omega \cap \varpi^{-1}(D)$ 
we say that $f$ admits a Puiseux expansion 
along $D \subset X$ at $\theta$ if there exist  
a ramification $\rho  : X'\to U$ 
of a neighborhood $U$ of $\varpi ( \theta ) \in D$ 
along $D \cap U \subset U$, a sectorial 
neighborhood $V \subset U \setminus D$ of $\theta$ 
contained in $\tl{\jmath}^{-1} (\Omega) 
= \varpi ( \Omega \setminus \varpi^{-1}(D)) 
\subset X \setminus D$ and a 
meromorphic function $g \in \SO_{X^{\prime}}(* D^{\prime})$ 
along the normal crossing divisor $ D^{\prime}:= \rho^{-1}(D) 
\subset X^{\prime}$ defined on an open neighborhood 
$W$ of $\rho^{-1}( \overline{V} \cap D)= 
\overline{\rho^{-1}(V)} \cap D^{\prime}$ in $X^{\prime}$ such that 
the pull-back of $f|_V \in \SO_X(V)$ by $\rho$ conicides 
with $g$ on the open subset $W \cap \rho^{-1}(V) \subset W$. 
We denote by $\SP_{\tl{X}}$ the subsheaf of 
$\tl{\jmath}_{\ast} j^{-1}\SO_X$ whose sections are defined by 
\begin{multline*}
\Gamma(\Omega;\SP_{\tl{X}}) \coloneq 
\{f\in\Gamma(\Omega;\tl{\jmath}_{\ast} 
j^{-1}\SO_X)\mid \textit{For any $\theta\in\Omega\cap \varpi^{-1}(D)$,} \\ 
\textit{$f$ admits a Puiseux expansion along $D \subset X$ at $\theta$.}\}
\end{multline*}
for open subsets $\Omega\subset\tl{X}$.
Then we define the sheaf of 
Puiseux germs $\SP_{\varpi^{-1}(D)}$ on $\varpi^{-1}(D)$ by   
\begin{align}
\SP_{\varpi^{-1}(D)}\coloneq\tl{\imath}^{\,-1}\SP_{\tl{X}}.
\end{align}
For a point $\theta \in \varpi^{-1}(D)$ if we take a local coordinate 
$(u,v)=(u_1, \ldots, u_l, v_1, \ldots, v_{n-l})$ of $X$ 
on a neighborhood of $\varpi ( \theta ) \in D$ in $X$ such that 
$\varpi ( \theta )=(0,0) \in D= \{ u_1 u_2 \cdots u_l=0 \}$ 
then the stalk of $\SP_{\varpi^{-1}(D)}$ at $\theta$ 
is isomorphic to the ring 
\begin{equation}
\bigcup_{p\in\ZZ_{\geq1}} \ 
\CC \{ u_1^{\frac{1}{p}}, \ldots, u_l^{\frac{1}{p}}, 
v_1, \ldots, v_{n-l} \} 
\bigl[
u_1^{- \frac{1}{p}}, \ldots, u_l^{- \frac{1}{p}}
\bigr].
\end{equation}
of Puiseux series along $D \subset X$. We 
denote by $\SP_{\varpi^{-1}(D)}^{\leq 0}$ the subsheaf of 
$\SP_{\varpi^{-1}(D)}$ consisting of sections locally contained in 
the ring 
\begin{equation}
\bigcup_{p\in\ZZ_{\geq1}} \ 
\CC \{ u_1^{\frac{1}{p}}, \ldots, u_l^{\frac{1}{p}}, 
v_1, \ldots, v_{n-l} \} 
\end{equation}
for some (hence, any) local coordinate 
$(u,v)=(u_1, \ldots, u_l, v_1, \ldots, v_{n-l})$ of $X$ 
as above. By this definition, it is clear that 
for any point $x \in D$ there exist its neighborhood 
$U$ in $X$ and a subsheaf $\SP_{\varpi^{-1}(D \cap U)}^{\prime} 
\subset \SP_{\varpi^{-1}(D \cap U)}$ of $\CC_{\varpi^{-1}(D \cap U)}$-modules  
defined on 
the open subset $\varpi^{-1}(D \cap U) \subset \varpi^{-1}(D)$ 
such that the natural morphism 
\begin{equation}
\SP_{\varpi^{-1}(D \cap U)}^{\prime} \longrightarrow 
\SP_{\varpi^{-1}(D \cap U)}/ \SP_{\varpi^{-1}(D \cap U)}^{\leq 0}
\end{equation}
is an isomorphism. We call such $\SP_{\varpi^{-1}(D \cap U)}^{\prime}$ 
a representative subsheaf of $\SP_{\varpi^{-1}(D \cap U)}$. 
By slightly modifying the definition of the multiplicities in 
D'Agnolo-Kashiwara \cite[Section 5.3]{DK18}, we shall use 
the following one (cf. \cite[Definition 2.4]{KT23}). 

\begin{definition}\label{def-multi}
(cf. \cite[Section 5.3]{DK18} and \cite[Definition 2.4]{KT23}) 
In the situation as above, we say that a morphism 
$N: \SP_{\varpi^{-1}(D \cap U)}^{\prime} \longrightarrow 
( \ZZ_{\geq 0})_{\varpi^{-1}(D \cap U)}$ of sheaves of 
sets is a multiplicity along $D \cap U \subset U$ if 
there exists 
a ramification $\rho  : X'\to U$ 
of $U$ along $D \cap U \subset U$ such that 
for any $\theta \in \varpi^{-1}(D \cap U)$ the subset 
$N_{\theta}^{>0}:= N_{\theta}^{-1}( \ZZ_{>0}) \subset 
\SP_{\varpi^{-1}(D \cap U), \theta}^{\prime}$ is finite 
and the pull-backs of its elements 
$f \in N_{\theta}^{>0}$ by $\rho$ are meromorphic 
functions on $X^{\prime}$ along $D^{\prime}:= \rho^{-1}(D) 
\subset X^{\prime}$ and form a good set 
$\{ [f \circ \rho ] \ | \ f \in N_{\theta}^{>0} \} 
 \subset \SO_{X^{\prime}}(\ast D^{\prime})/ \SO_{X^{\prime}}$ 
of irregular values on $(X^{\prime}, D^{\prime})$ 
on a neighborhood of the point $\rho^{-1}( \varpi ( \theta ))
\in D^{\prime}$. 
\end{definition}

\begin{definition}\label{def-qnf}
(cf. \cite[Definition 5.3.1]{DK18} and \cite[Definition 2.5]{KT23}) 
In the situation as above, we say that an 
$\RR$-constructible enhanced sheaf 
$F \in \BEC( \CC_X)$ on $X$ 
has a quasi-normal form along the normal crossing 
divisor $D \cap U \subset U$ if there exists a 
multiplicity $N: \SP_{\varpi^{-1}(D \cap U)}^{\prime} \longrightarrow 
( \ZZ_{\geq 0})_{\varpi^{-1}(D \cap U)}$ 
such that any point $\theta\in\varpi^{-1}(D \cap U)$ has 
its sectorial open neighborhood 
$V_{\theta} \subset U \setminus D \subset \tl{X}$ for 
which we have an isomorphism 
\begin{equation}
\pi^{-1} \CC_{V_{\theta}} \otimes F 
\simeq
\bigoplus_{f \in N_{\theta}^{>0}} 
\Bigl(  {\rm E}_{V_{\theta} |X}^{\Re f} 
\Bigr)^{N(f)}.
\end{equation}
\end{definition}
Enhanced ind-sheaves having a quasi-normal form along the 
normal crossing divisor $D \cap U \subset U$ are defined similarly. 

\begin{lemma}[{\cite[Lemma 7.7]{Tak22}}]\label{lem-hda-dk}  
Assume that a holonomic $\SD_X$-module $\SM$ has a 
quasi-normal form along the normal crossing 
divisor $D \subset X$. Then for any point $x \in D$ 
there exist a subanalytic open neighborhood 
$U$ of $x$ in $X$ such that the 
$\RR$-constructible enhanced ind-sheaf 
\begin{equation}
\pi^{-1} \CC_{U} \otimes Sol_X^\rmE ( \SM ) 
\simeq
\pi^{-1} \CC_{U \setminus D} \otimes 
Sol_X^\rmE ( \SM ) 
\end{equation}
has a quasi-normal form along the normal crossing 
divisor $D \cap U \subset U$. 
\end{lemma}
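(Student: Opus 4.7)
The strategy is to transport the normal-form decomposition of $\bfD\rho^{\ast}(\SM|_U)$ on the ramified cover down to $U$ along $\rho$, producing the required multiplicity and the sectorial trivializations simultaneously. By Definition~\ref{def-B}, at the given point $x\in D$ I may choose a subanalytic open neighborhood $U$ of $x$ and a ramification $\rho\colon X'\longrightarrow U$ along $D\cap U$ such that the pulled-back module $\SM'\coloneq\bfD\rho^{\ast}(\SM|_U)$ has a normal form along $D'\coloneq\rho^{-1}(D\cap U)$. After shrinking $U$ if necessary, by \cite[Propositions 3.5 and 3.19]{IT20} there exist a \emph{single} good set $S=\{[\varphi_1],\ldots,[\varphi_k]\}\subset\SO_{X'}(\ast D')/\SO_{X'}$ and positive integers $m_1,\ldots,m_k$ which control the sectorial decomposition of $Sol_{X'}^{\rmE}(\SM')$ uniformly over $(\varpi')^{-1}(D')$.

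Next I would construct the multiplicity $N\colon\SP'_{\varpi^{-1}(D\cap U)}\longrightarrow(\ZZ_{\geq0})_{\varpi^{-1}(D\cap U)}$ as follows. Fix a representative subsheaf $\SP'_{\varpi^{-1}(D\cap U)}\subset\SP_{\varpi^{-1}(D\cap U)}$ chosen so as to contain, near every $\theta\in\varpi^{-1}(D\cap U)$, the finite family of Puiseux germs obtained by composing each $\varphi_i$ with a local section of $\rho$ on a sectorial neighborhood; such a choice is possible because the $\varphi_i$ pull back to meromorphic functions on $X'$. Stalkwise at $\theta$ I would then set $N_\theta(f)\coloneq m_i$ if the pullback $f\circ\rho$ is equivalent to $\varphi_i$ modulo $\SO_{X'}$ near some (equivalently, any) lift $\theta'\in(\varpi')^{-1}(D')$ of $\theta$, and $N_\theta(f)\coloneq0$ otherwise. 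The properties required in Definition~\ref{def-multi} (finiteness, the pullbacks forming a good set) are then inherited directly from the goodness of $S$ on $(X',D')$.

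To verify the decomposition required by the enhanced-ind-sheaf analogue of Definition~\ref{def-qnf}, fix $\theta\in\varpi^{-1}(D\cap U)$ and pick a lift $\theta'\in(\varpi')^{-1}(D')$ together with a sufficiently small sectorial open neighborhood $V'\subset X'\setminus D'$ of $\theta'$ such that $\rho|_{V'}\colon V'\xrightarrow{\sim}V$ onto a sectorial neighborhood $V\subset U\setminus D$ of $\theta$. Setting $f_i\coloneq\varphi_i\circ(\rho|_{V'})^{-1}$, the isomorphism~\eqref{mon-eq} already established in the text gives
\begin{equation*}
\pi^{-1}\CC_V\otimes Sol_X^{\rmE}(\SM)\;\simeq\;\bigoplus_{i=1}^{k}\bigl(\EE_{V|X}^{\Re f_i}\bigr)^{\oplus m_i},
\end{equation*}
which is exactly the local identification demanded of a quasi-normal form with multiplicity $N$.

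The main obstacle is the globalization step: one must check that $N$ is a genuine morphism of sheaves on $\varpi^{-1}(D\cap U)$ and not merely a pointwise prescription. Three issues need care: independence from the choice of lift $\theta'$ of $\theta$ (distinct lifts are related by the Galois action of $\rho$, under which the unordered family $\{(\varphi_i,m_i)\}$ is invariant because the left-hand side of \eqref{mon-eq} is intrinsic); compatibility under restriction between overlapping sectorial neighborhoods (which boils down to uniqueness of the decomposition in~\eqref{mon-eq}, and is where Lemma~\ref{DK-lem-2} is used to rule out non-trivial morphisms between the exponential summands with distinct irregular exponents); and the ability to choose a single representative subsheaf $\SP'_{\varpi^{-1}(D\cap U)}$ valid over the entire $\varpi^{-1}(D\cap U)$, which follows from the uniformity of $(S,m_\bullet)$ on $(\varpi')^{-1}(D')$ supplied by \cite[Propositions 3.5 and 3.19]{IT20}.
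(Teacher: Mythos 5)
Your proposal follows the same overall route as the paper's terse proof: after shrinking $U$, pull back along the ramification $\rho$ from Definition~\ref{def-B}, invoke the results of \cite{IT20} (\cite[Propositions 3.5 and 3.19]{IT20}) to obtain a uniform good set of irregular values with multiplicities, and transport the sectorial decomposition \eqref{mon-eq} back to $U$ to define the multiplicity $N$ on a representative subsheaf of $\SP_{\varpi^{-1}(D\cap U)}$. The stalkwise construction of $N$, the finiteness, and the goodness all match the paper's intended argument.

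The one place where you deviate, and where there is a genuine inaccuracy, is the gluing step. The paper gets compatibility of the local sectorial decompositions for free by citing \eqref{mono-eqn}, the decomposition of $\SM^{\SA}$ on an honest open neighborhood $W$ of $\theta$ in $\tl X$: this is already a sheaf-theoretic statement on the real oriented blowup and hence globalizes automatically across overlapping sectors. You instead try to establish uniqueness of the decomposition \eqref{mon-eq} by appealing to Lemma~\ref{DK-lem-2} ``to rule out non-trivial morphisms between exponential summands with distinct irregular exponents.'' But Lemma~\ref{DK-lem-2} does not assert any such vanishing; it only says that the map from $\Hom^{\rmE}\bigl(\rmE_{V|M}^{\Re f_i}, \rmE_{V|M}^{\Re f_j}\bigr)$ to $\Hom^{\rmE}\bigl(\EE_{V|X}^{\Re f_i}, \EE_{V|X}^{\Re f_j}\bigr)$ is an isomorphism. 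That comparison is what the paper needs later for Proposition~\ref{hda-dk}, where the enhanced ind-sheaf must be replaced by a genuine enhanced sheaf, but it does not by itself give the vanishing or one-sidedness of $\Hom$ that your uniqueness argument requires. To close the gap you would need to compute those $\Hom$ groups on the enhanced-sheaf side directly (an elementary but separate step), or, more economically, just use \eqref{mono-eqn} as the paper does and avoid the $\Hom$ argument entirely.
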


\begin{proof}
The proof is similar to that of \cite[Lemma 5.4.4]{DK18}. 
With the representative subsheaf $\SP_{\varpi^{-1}(D \cap U)}^{\prime}$ 
of $\SP_{\varpi^{-1}(D \cap U)}$ at hands, it suffices to use 
\eqref{mon-eq}, \eqref{mono-eqn} and 
\cite[Propositions 3.10 and 3.19]{IT20}. 
\end{proof}
In the situation of Lemma \ref{lem-hda-dk}, let 
$N: \SP_{\varpi^{-1}(D \cap U)}^{\prime} \longrightarrow 
( \ZZ_{\geq 0})_{\varpi^{-1}(D \cap U)}$ be the multiplicity 
for which the enhanced ind-sheaf 
$\SF  \simeq \pi^{-1} \CC_{U} \otimes Sol_X^\rmE ( \SM ) 
\in \BEC( \I \CC_X)$ has a quasi-normal form along 
the normal crossing divisor $D \cap U \subset U$. Then 
by the proof of Lemma \ref{lem-hda-dk}, the sections of 
the subsheaf $N^{>0}= N^{-1}(( \ZZ_{>0})_{\varpi^{-1}(D \cap U)}) 
\subset \SP_{\varpi^{-1}(D \cap U)}^{\prime}$ are 
the exponential factors of $\SM$. Moreover, if 
the divisor $D \cap U \subset U$ is smooth and connected, 
then the non-negative rational number 
\begin{equation}
\dsum_{f \in N_{\theta}^{>0}} N_{\theta}(f) \cdot {\rm ord}_{D \cap U}(f) 
\quad \in \QQ_{\geq 0}  
\end{equation}
associated to a point $\theta \in \varpi^{-1}(D \cap U)$ 
is an integer and does not depend on the choice of 
$\theta \in \varpi^{-1}(D \cap U)$, where for 
the exponential factor $f \in N_{\theta}^{>0}$ of $\SM$ 
the rational number ${\rm ord}_{D \cap U}(f)  \geq 0$ stands for 
the pole order of $f$ along $D \cap U$. We call it the 
irregularity of $\SM$ along $D \cap U$ and 
denote it by ${\rm irr}_{D \cap U} ( \SM )$. 
If $D \subset X$ itself is smooth and connected, 
we define the irregularity ${\rm irr}_{D} ( \SM ) 
\in \ZZ_{\geq 0}$ of $\SM$ along $D \subset X$ similarly. 
By Lemmas \ref{DK-lem-2} and \ref{lem-hda-dk},  
we obtain the following higher-dimensional analogue 
of \cite[Proposition 5.4.5]{DK18}. 
For a precise explanation of the proof of 
\cite[Proposition 5.4.5]{DK18}, see \cite[Remark 2.10]{KT23}. 

\begin{proposition}[{\cite[Proposition 7.8]{Tak22}}]\label{hda-dk}  
Assume that a holonomic $\SD_X$-module $\SM$ has a 
quasi-normal form along the normal crossing 
divisor $D \subset X$. Then for any point $x \in D$ 
there exist a subanalytic open neighborhood 
$U$ of $x$ in $X$ and an 
$\RR$-constructible enhanced sheaf 
$F \in \BEC( \CC_X)$ on $X$ 
having a quasi-normal form along the normal crossing 
divisor $D \cap U \subset U$ 
such that 
\begin{equation}
\pi^{-1} \CC_{U} \otimes Sol_X^\rmE ( \SM ) 
\simeq
\pi^{-1} \CC_{U \setminus D} \otimes 
Sol_X^\rmE ( \SM ) 
\simeq
\CC_X^\rmE \Potimes F. 
\end{equation}
\end{proposition}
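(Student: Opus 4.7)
The plan is to deduce the proposition by gluing local enhanced-sheaf models of the quasi-normal form decomposition of $Sol_X^\rmE(\SM)$, using Lemma \ref{DK-lem-2} as the crucial bridge that turns gluing data of enhanced ind-sheaves into gluing data of enhanced sheaves. First I would use the conditions in Definitions \ref{def-A} and \ref{def-B} (namely $\SM\simto\SM(\ast D)$) together with Theorem \ref{thm-4} (v) to obtain the easy part
\[
\pi^{-1}\CC_U\otimes Sol_X^\rmE(\SM) \simeq \pi^{-1}\CC_{U\setminus D}\otimes Sol_X^\rmE(\SM),
\]
for any open neighborhood $U$ of $x$. Then I would invoke Lemma \ref{lem-hda-dk} to shrink $U$ so that the enhanced ind-sheaf $\SF\coloneq\pi^{-1}\CC_U\otimes Sol_X^\rmE(\SM)$ has a quasi-normal form along $D\cap U\subset U$ with some multiplicity $N\colon\SP_{\varpi^{-1}(D\cap U)}^{\prime}\to(\ZZ_{\geq0})_{\varpi^{-1}(D\cap U)}$, i.e.\ there is a sectorial open cover $\{V_\theta\}_{\theta\in\varpi^{-1}(D\cap U)}$ of $\varpi^{-1}(D\cap U)\subset\tl{X}$ such that
\[
\pi^{-1}\CC_{V_\theta}\otimes \SF \simeq \bigoplus_{f\in N_\theta^{>0}}\bigl(\EE_{V_\theta|X}^{\Re f}\bigr)^{N(f)}.
\]

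Next I would define the candidate local enhanced-sheaf model on each sectorial patch by
\[
F_\theta\coloneq\bigoplus_{f\in N_\theta^{>0}}\bigl({\rm E}_{V_\theta|X}^{\Re f}\bigr)^{N(f)}\quad\in\BEC(\CC_X),
\]
so that, by the defining identity $\EE_{V|X}^{\Re f}\simeq\CC_X^\rmE\Potimes{\rm E}_{V|X}^{\Re f}$, we have $\CC_X^\rmE\Potimes F_\theta\simeq\pi^{-1}\CC_{V_\theta}\otimes\SF$ automatically. The goal then becomes to glue the $F_\theta$'s into a single $\RR$-constructible enhanced sheaf $F\in\BEC(\CC_X)$. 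On the overlap $V_\theta\cap V_{\theta'}$, the quasi-normal form decomposition of $\SF$ provides a transition isomorphism $\varphi_{\theta',\theta}$ between the direct sums of $\EE_{V|X}^{\Re f}$'s, i.e.\ an element in a $\Hom^\rmE$ group built from such exponential enhanced ind-sheaves.

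The main obstacle, as in the one-dimensional case \cite[Proposition 5.4.5]{DK18} (cf.\ \cite[Remark 2.10]{KT23}), is exactly to promote these transition isomorphisms to transition isomorphisms between the local sheaf models $F_\theta$. Here Lemma \ref{DK-lem-2} (shrinking the sectorial neighborhoods once if necessary so that the pairs appearing in each overlap satisfy its hypotheses) supplies the key bijection
\[
\Hom^\rmE\bigl({\rm E}_{V|X}^{\Re f_i},{\rm E}_{V|X}^{\Re f_j}\bigr)\simto\Hom^\rmE\bigl(\EE_{V|X}^{\Re f_i},\EE_{V|X}^{\Re f_j}\bigr),
\]
so that each $\varphi_{\theta',\theta}$ lifts uniquely to a morphism $\tl{\varphi}_{\theta',\theta}\colon F_\theta|_{V_\theta\cap V_{\theta'}}\to F_{\theta'}|_{V_\theta\cap V_{\theta'}}$. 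The maps $\tl{\varphi}_{\theta',\theta}$ are isomorphisms because they become so after applying the faithful functor $\CC_X^\rmE\Potimes(-)$, and the cocycle identity $\tl{\varphi}_{\theta'',\theta'}\circ\tl{\varphi}_{\theta',\theta}=\tl{\varphi}_{\theta'',\theta}$ on triple overlaps holds because it holds on the ind-sheaf side, where the bijection of Lemma \ref{DK-lem-2} is injective. This yields a globally defined $\RR$-constructible enhanced sheaf $F$ on $U$ with a quasi-normal form along $D\cap U$, which we extend by zero to obtain an object of $\BEC(\CC_X)$.

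Finally I would verify the isomorphism $\CC_X^\rmE\Potimes F\simeq\SF$: by construction the two sides agree locally on each sectorial piece $V_\theta$ (where $\CC_X^\rmE\Potimes F_\theta\simeq\pi^{-1}\CC_{V_\theta}\otimes\SF$), the gluing isomorphisms on overlaps coincide after $\CC_X^\rmE\Potimes(-)$, and the identification is the original one arising from the quasi-normal form of $\SF$; hence they patch to a global isomorphism on $U$. Together with the first displayed isomorphism, this completes the construction.
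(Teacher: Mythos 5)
Your proposal matches the paper's intended proof: the paper states that Proposition \ref{hda-dk} is obtained ``by Lemmas \ref{DK-lem-2} and \ref{lem-hda-dk}'' as a higher-dimensional analogue of \cite[Proposition 5.4.5]{DK18}, which is exactly your route of using Lemma \ref{lem-hda-dk} to produce the local exponential models $F_\theta$ and Lemma \ref{DK-lem-2} to promote the ind-sheaf transition isomorphisms and their cocycle relations to the enhanced-sheaf level. The one point the paper is careful to flag (by deferring to \cite[Remark 2.10]{KT23}) and which your sketch passes over quickly is that the $F_\theta$'s and their lifted transition maps must be realized as genuine $\RR$-constructible sheaves on $X\times\RR$ in degree zero, so that gluing them into a single object of $\BEC(\CC_X)$ is legitimate despite $\BEC(\CC_X)$ being a quotient of a derived category; also, the isomorphy of each $\tl{\varphi}_{\theta',\theta}$ is better justified directly from the bijectivity of the Hom-map in Lemma \ref{DK-lem-2} (the lifted inverse composes to $\id$ by injectivity) than by an appeal to faithfulness of $\CC_X^\rmE\Potimes(-)$.
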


\begin{proposition}\label{thm-1}
Let $D \subset X$ be a normal crossing divisor on a 
complex manifold $X$, $x \in D$ a point on it and $\SM$ 
a holonomic $\SD_X$-module having a quasi-normal form along 
$D$. Let $(u,v)=(u_1, \ldots, u_l, v_1, \ldots, v_{n-l})$ 
be a local coordinate of $X$ such that 
$x=(0,0) \in D= \{ u_1 u_2 \cdots u_l=0 \}$. 
Then we have the following results. 
\begin{enumerate}
\item[\rm{(i)}] 
The solution complex $Sol_X ( \SM )$ of $\SM$ is 
constructible with respect to the standard 
stratification of $X$ associated to the normal 
crossing divisor $D \subset X$. 
\item[\rm{(ii)}] 
Assume moreover that $l=1$. Then we have 
\begin{equation} 
\chi \bigl(
Sol_X ( \SM )
\bigr)(x)= - {\rm irr}_D( \SM )(x), 
\end{equation} 
where ${\rm irr}_D( \SM )(x) \in \ZZ_{\geq 0}$ 
stands for the irregularity of $\SM$ 
along $D \subset X$ on a neighborhood of the point $x \in D$. 
\item[\rm{(iii)}] 
Assume moreover that $l \geq 2$. Then we have 
\begin{equation} 
\chi \bigl( Sol_X ( \SM ) \bigr)(x)=0. 
\end{equation} 
\item[\rm{(iv)}] 
In the situation of (ii), we set 
\begin{equation}
D_i^{\circ}:= D_i \setminus \Bigl( \bigcup_{j \not= i} D_j \Bigr) 
\qquad (1 \leq i \leq l)
\end{equation}
and assume also that there exists $1 \leq i \leq l$ such that 
${\rm irr}_{D_i^{\circ}}( \SM ) = 0$. Then we have 
$Sol_X ( \SM )_x \simeq 0$. 
\end{enumerate}
\end{proposition}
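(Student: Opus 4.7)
The plan is to reduce the four assertions to the explicit computations for standard exponential meromorphic connections $\SE^{\varphi}_{U \setminus D | U}$ already carried out in Proposition \ref{prop-1} and Corollary \ref{cor-1}, using the quasi-normal form structure provided by Proposition \ref{hda-dk}. By Definition \ref{def-B}, after passing to a suitable ramification $\rho : X' \to U$ on a neighborhood $U$ of $x$, the pull-back $\SM' := \bfD\rho^*(\SM|_U)$ has a normal form along $D' := \rho^{-1}(D \cap U)$, and on a neighborhood of each point of $\varpi^{-1}(D')$ in $\widetilde{X'}$ the associated $\SA$-module decomposes as $\bigoplus_{i=1}^k \bigl((\SE^{\varphi_i}_{U' \setminus D' | U'})^\SA\bigr)^{\oplus m_i}$. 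Moreover $\SM|_U$ is a direct summand of $\bfD\rho_*\SM'$, and since $\rho^{-1}(x)$ is the single point $x'$, the properness of $\rho$ combined with Theorem \ref{thm-4}(iii) gives the identification
\begin{equation*}
\chi\bigl(Sol_X(\bfD\rho_*\SM')\bigr)(x) = \chi\bigl(Sol_{X'}(\SM')\bigr)(x').
\end{equation*}

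For assertion (i), each $Sol_{X'}(\SE^{\varphi_i}_{U' \setminus D' | U'})$ is constructible with respect to the standard stratification of $X'$ associated to $D'$ by Proposition \ref{prop-1}. The ramification $\rho$ is stratified with respect to the standard stratifications of $X'$ and $U$ associated to $D'$ and $D\cap U$, so $\rmR \rho_\ast Sol_{X'}(\SM')$ is constructible with respect to the standard stratification of $U$, and so is the direct summand $Sol_X(\SM|_U)$. For assertions (ii) and (iii), apply Corollary \ref{cor-1} to each exponential summand: its stalk Euler characteristic at $x'$ equals $-\mathrm{ord}_{D'}(\varphi_i)$ when $l = 1$ and vanishes when $l \geq 2$. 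Summing with multiplicities yields $\chi(Sol_{X'}(\SM'))(x')$; dividing by the ramification degree $d_1 \cdots d_l$ (to account for the decomposition $\bfD\rho_*\SM' \simeq \SM|_U \oplus \SM''$, where the complementary summand $\SM''$ is built from regular Galois twists of $\SM|_U$ which contribute equally to the stalk Euler characteristic at $x$) and identifying the resulting weighted sum with $\mathrm{irr}_D(\SM)(x)$ as defined in the excerpt yields (ii), while (iii) follows immediately.

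For assertion (iv), the hypothesis $\mathrm{irr}_{D_i^\circ}(\SM) = 0$ forces every exponential factor $\varphi_j$ appearing in the normal form of $\SM'$ to have pole order zero along the component of $D'$ lying over $D_i$, i.e.\ the exponent $k_i$ in the notation of Proposition \ref{prop-1}(ii) vanishes. By the explicit stalk formula there, this gives $d = 0$ and hence $H^j Sol_{X'}(\SE^{\varphi_j})_{x'} = 0$ for every $j$; summing over the factors with multiplicities and descending through $\rho$ yields $Sol_X(\SM)_x \simeq 0$. The main obstacle will be the careful bookkeeping around the ramification: one must verify precisely that the Galois decomposition of $\bfD\rho_*\SM'$ extracts $\SM|_U$ with the expected multiplicity (and that the complementary Galois twists do not perturb the stalk Euler characteristic beyond this), and that the intrinsic invariant $\mathrm{irr}_D(\SM)(x)$ coincides with the weighted pole-order sum on the ramified cover divided by the ramification degree. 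Once these compatibilities are pinned down, the rest of the argument is a direct extraction from Proposition \ref{hda-dk} and Corollary \ref{cor-1}.
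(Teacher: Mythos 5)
Your overall scaffold — pass to a ramification $\rho\colon X'\to U$ via Definition \ref{def-B}, reduce to the normal form case, and quote the computations of Proposition \ref{prop-1} and Corollary \ref{cor-1} — matches the paper. However, the mechanism you use to descend information from $X'$ back to $U$ is genuinely different and, as written, has two gaps.

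First, the paper establishes directly, ``by the proof of Proposition \ref{prop-1}'', the topological scaling identity
\begin{equation*}
\chi\bigl(Sol_{X'}(\bfD\rho^\ast(\SM|_U))\bigr)(x') \;=\; d\cdot \chi\bigl(Sol_X(\SM)\bigr)(x),
\end{equation*}
where $d=d_1\cdots d_l$, by observing that the stalk at $x'$ is computed by a region in the real oriented blow-up that is a $d$-fold cover of the one computing the stalk at $x$. You instead try to recover the same relation from the Galois decomposition $\bfD\rho_\ast\bfD\rho^\ast(\SM|_U)\simeq\bigoplus_\chi \SM|_U\Dotimes L_\chi$ together with the assertion that the non-trivial Galois twists $\SM|_U\Dotimes L_\chi$ ``contribute equally'' to the stalk Euler characteristic at $x$. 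This last assertion is precisely a special instance of the result you are trying to prove: the only route to it is knowing that the local Euler characteristic depends solely on the irregular data and not on a regular rank-one twist, which is exactly the content of (ii) and (iii). The Galois decomposition only gives you the \emph{sum} $\sum_\chi\chi\bigl(Sol_X(\SM|_U\Dotimes L_\chi)\bigr)(x)$, not the individual summands, so without an independent argument the step ``dividing by the ramification degree'' is circular. The paper avoids this by never invoking the Galois decomposition at all.

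Second, your treatment of (ii)--(iv) repeatedly writes $Sol_{X'}(\SM')_{x'}$ as if it were $\bigoplus_i \bigl(Sol_{X'}(\SE^{\varphi_i}_{U'\setminus D'|U'})_{x'}\bigr)^{\oplus m_i}$. The normal form isomorphism $\SM^{\SA}|_W\simeq\bigoplus_i\bigl((\SE^{\varphi_i})^{\SA}|_W\bigr)^{\oplus m_i}$ holds only on neighborhoods $W$ of individual points $\theta\in\varpi^{-1}(D')$, and these local isomorphisms do not glue: the remark after Proposition \ref{prop-1} explicitly records that the cohomology sheaves $H^jSol_X(\cdot)|_S$ on strata may carry non-trivial monodromy. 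For the Euler characteristic statements (ii) and (iii) this can be repaired by additivity of constructible functions, since $\chi$ is a local invariant. But for (iv) you need vanishing of the whole stalk complex, not just its Euler characteristic, and the direct-sum shortcut simply does not hold at that level. This is exactly where the paper invokes the Mayer--Vietoris argument ``killing the monodromies'' as in \cite[Propositions 3.15 and Theorem 3.18]{IT20}; that step is missing from your proposal and is not mere bookkeeping.
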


\begin{proof}
By Proposition \ref{hda-dk} we take an 
$\RR$-constructible enhanced sheaf 
$F \in \BEC( \CC_X)$ on $X$ 
having a quasi-normal form along the normal crossing 
divisor $D= \{ u_1 \cdots u_l=0 \} 
\subset X$ such that there exists an isomorphism 
\begin{equation}
Sol_X^\rmE ( \SM ) 
\simeq
\pi^{-1} \CC_{X \setminus D} \otimes 
Sol_X^\rmE ( \SM ) 
\simeq
\CC_X^\rmE \Potimes F
\end{equation}
on a neighborhood of $x \in D$ in $X$. Let $U \subset X$ 
be a sufficiently small open neighborhood of $x \in D$ in $X$ 
and $\rho : X^{\prime} \longrightarrow U$ a ramification of 
$U$ along $D \cap U \subset U$ such that 
$\bfD \rho^\ast(\SM|_U)$
has a normal form along the normal crossing divisor 
$D^{\prime}:= \rho^{-1}(D\cap U) \subset X^{\prime}$. 
Then the restriction $X^{\prime} \setminus D^{\prime} 
\longrightarrow U \setminus (D \cap U)$ of $\rho$ is an 
unramified covering and we denote its covering degree 
by $d>0$. Moreover by Theorem \ref{thm-4} (ii) 
and \cite[Proposition 4.7.14 (ii)]{DK16} 
for the enhanced sheaf $G:= \bfE \rho^{-1}(F|_{U \times \RR}) 
\in \BEC( \CC_{X^{\prime}})$ on $X^{\prime}$ we have an 
isomorphism 
\begin{equation}
Sol_{X^{\prime}}^\rmE \bigl(  \bfD \rho^\ast(\SM|_U) \bigr) 
\simeq
\CC_{X^{\prime}}^\rmE \Potimes G. 
\end{equation}
Let $x^{\prime} \in D^{\prime}$ be the unique point in the 
one-point set $\rho^{-1}(x) \subset D^{\prime}$. 
Then by the proof of Proposition \ref{prop-1} we can 
easily show that 
\begin{equation}
\chi \bigl( Sol_{X^{\prime}} \bigl( \bfD \rho^\ast(\SM|_U) \bigr) \bigr)
(x^{\prime})= d \cdot \chi \bigl( Sol_X ( \SM ) \bigr)(x). 
\end{equation}
By killing the monodromies of the enhanced sheaf 
$G \in \BEC( \CC_{X^{\prime}})$ with the help of 
the Mayer-Vietoris exact sequences associated to 
an open covering of $X^{\prime} \setminus D^{\prime}$ 
by some open sectors along $D^{\prime} \subset X^{\prime}$ 
as in the proof of \cite[Proposition 3.15 and Theorem 3.18]{IT20}, 
we can prove the assertions (i)--(iv) along the same lines as in 
the proof of Proposition \ref{prop-1}. 
This completes the proof. 
\end{proof}
Note that the assertions (ii) and (iii) 
of Proposition \ref{thm-1} have been 
obtained previously by Hu and Teyssier in \cite[Proposition 5.5]{HT25} by a totally 
different method. Whereas our proof relies on the 
theories of ind-sheaves and the irregular Riemann-Hilbert correspondence, 
Hu and Teyssier used Sabbah's study of irregularity sheaves 
in \cite{Sab17}. By Proposition \ref{thm-1} (ii) and (iii) 
we obtain the following 
formula of Hu-Teyssier \cite[Proposition 5.6]{HT25} 
for the characteristic cycles of holonomic $\SD$-modules 
having a quasi-normal form.

\begin{corollary}[{Hu-Teyssier \cite[Proposition 5.6]{HT25}}]\label{thm-CCquasi} 
In the situation of Proposition \ref{thm-1} let $U$ be an open neighborhood of 
the point $x\in D$ in $X$ on which the local coordinate 
$(u,v)=(u_1,\dots,u_l,v_1,\dots,v_{n-l})$ of $X$ such that 
$x=(0,0)\in D=\{u_1 u_2\cdots u_l=0\}$ $(1\leq l\leq n)$ is defined. 
For $1\leq i\leq l$ we set $D_i\coloneq\{(u,v)\in U\mid u_i=0\}\subset U$ and 
\begin{equation}
D_i^\circ\coloneq D_i\setminus \big(\bigcup_{j\neq i}D_j\big) \quad \subset D_i.
\end{equation}
Let $r \geq 0$ be the generic rank of $\SM$. 
Then on the open subset $U\subset X$ the characteristic cycle 
$\CCyc(\SM)$ of $\SM$ is given by the formula 
\begin{align}
\CCyc(\SM)&= r \cdot [T_X^\ast X] 
+\sum_{i=1}^l(\irr_{D_i^\circ}(\SM)+r)\cdot[T_{D_i}^\ast X] \\
&+\sum_{1\leq i<j\leq l}(\irr_{D_i^\circ}(\SM)+\irr_{D_j^\circ}(\SM)+r)
\cdot [T_{D_i\cap D_j}^\ast X] \\
&\cdots \cdots \cdots \\
&+(\irr_{D_1^\circ}(\SM)+\cdots+\irr_{D_l^\circ}(\SM)+r)\cdot[
T_{D_1\cap\cdots\cap D_l}^\ast X] \\
&=\sum_{r=0}^l\Bigl\{\sum_{1\leq i_1<\cdots<i_r\leq l}(\irr_{D_{i_1}^\circ}(\SM)
+\cdots+\irr_{D_{i_r}^\circ}(\SM)+r)
\cdot[T_{D_{i_1}\cap\cdots\cap D_{i_r}}^\ast X]\Bigr\}.
\end{align}
\end{corollary}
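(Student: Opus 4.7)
The plan is to extract the multiplicities of $\CCyc(\SM)$ from the local Euler--Poincar\'e indices of $Sol_X(\SM)$ computed in Proposition \ref{thm-1}, by inverting a triangular Dubson/Kashiwara-type local index formula. By Proposition \ref{thm-1} (i), $Sol_X(\SM)$ is constructible with respect to the standard stratification $\{D_J^\circ\}_{J\subseteq\{1,\dots,l\}}$ of $U$ associated to $D\cap U$, where for $J=\{i_1<\cdots<i_s\}$ one sets $D_J:=D_{i_1}\cap\cdots\cap D_{i_s}$ (smooth of codimension $s=|J|$) and $D_J^\circ:=D_J\setminus\bigcup_{j\notin J}D_j$. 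Consequently the characteristic variety of $\SM$ on $U$ is contained in $\bigcup_J T_{D_J}^\ast X$, so one may write $\CCyc(\SM)|_U=\sum_J n_J\,[T_{D_J}^\ast X]$ with $n_J\in\ZZ_{\geq 0}$, and the corollary is the identity $n_J=r+\sum_{i\in J}\irr_{D_i^\circ}(\SM)$.

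The key ingredient is a local index formula expressing $\chi(Sol_X(\SM))(p)$ in terms of the multiplicities $n_J$. For any holonomic $\SD_X$-module whose characteristic variety is contained in $\bigcup_J T_{D_J}^\ast X$ with smooth $D_J$, one has
\begin{equation*}
\chi\bigl(Sol_X(\SM)\bigr)(p)\;=\;\sum_J (-1)^{|J|}\,n_J\cdot \Eu_{D_J}(p)\qquad(p\in U),
\end{equation*}
where $\Eu_Y$ denotes the local Euler obstruction. Since each $D_J$ is smooth, $\Eu_{D_J}(p)=1$ when $p\in D_J$ and $0$ otherwise; moreover $p\in D_K$ if and only if $K\subseteq J$ when $p\in D_J^\circ$. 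Hence at a generic point $p_J\in D_J^\circ$ we obtain the triangular relation
\begin{equation*}
\chi\bigl(Sol_X(\SM)\bigr)(p_J)\;=\;\sum_{K\subseteq J}(-1)^{|K|}\,n_K.
\end{equation*}

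I would then compute the left-hand side using Proposition \ref{thm-1}. Near $p_J$ the divisor $D$ consists of exactly $|J|$ smooth components meeting transversely, and the quasi-normal-form hypothesis descends to any subanalytic open neighborhood of $p_J$; Proposition \ref{thm-1} (ii) and (iii) therefore yield $\chi(Sol_X(\SM))(p_\emptyset)=r$, $\chi(Sol_X(\SM))(p_{\{i\}})=-\irr_{D_i^\circ}(\SM)$, and $\chi(Sol_X(\SM))(p_J)=0$ for $|J|\geq 2$. M\"obius inversion on the Boolean lattice of subsets of $\{1,\dots,l\}$ finally gives
\begin{equation*}
n_J\;=\;\sum_{K\subseteq J}(-1)^{|K|}\,\chi\bigl(Sol_X(\SM)\bigr)(p_K)\;=\;r+\sum_{i\in J}\irr_{D_i^\circ}(\SM),
\end{equation*}
which is the asserted formula.

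The main obstacle I expect is justifying the precise form (with the sign $(-1)^{|J|}$) of the local index formula in the irregular setting. For constructible sheaves the relation $\chi(F)(p)=\sum_V m_V\,\Eu_{\pi(V)}(p)$ is classical Dubson--Kashiwara (see e.g.\ \cite[Chapter~IX]{KS90}), so it suffices to identify the coefficients of the constructible-sheaf characteristic cycle of $Sol_X(\SM)$ as $(-1)^{|J|}n_J$. In the regular case this follows from $CC(DR_X(\SM))=\CCyc(\SM)$ combined with the shift isomorphism $Sol_X(\SM)[d_X]\simeq DR_X(\DD_X\SM)$, but for irregular $\SM$ the identity $CC(DR_X(\SM))=\CCyc(\SM)$ fails in general and one must instead verify the coefficient formula stratum by stratum from the local description of $Sol_X^\rmE(\SM)$ near each $D_J^\circ$ given in Proposition \ref{hda-dk}, reducing the matter to the model calculations of Proposition \ref{prop-1}. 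A minor secondary point is the well-definedness of $\irr_{D_i^\circ}(\SM)$ as a single integer along each component and its invariance under transverse restriction to a generic point of $D_i^\circ$, both of which follow from the stability of the exponential factors of $\SM$ and their pole orders noted after Lemma \ref{lem-hda-dk}.
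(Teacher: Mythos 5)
Your proof is correct and is essentially the paper's argument: evaluate $\chi(Sol_X(\SM))$ on each stratum via Proposition~\ref{thm-1}\,(ii)--(iii) and recover the multiplicities $n_J$ of $\CCyc(\SM)$ by inverting the triangular system furnished by Kashiwara's index theorem --- your M\"obius inversion on the Boolean lattice is the same elementary step the paper performs via the binomial identity $(1-1)^m=0$. The one point worth flagging is that the ``main obstacle'' you anticipate is not a genuine one: Kashiwara's local index theorem \cite{Kas83}, which is exactly what the paper cites, holds for \emph{all} holonomic $\SD_X$-modules with no regularity hypothesis, so the relation $\chi(Sol_X(\SM))(p)=\sum_{K}(-1)^{|K|}\,n_K\,\Eu_{D_K}(p)$ is immediately available and no further stratum-by-stratum verification via Proposition~\ref{hda-dk} is needed; what breaks down in the irregular case is full faithfulness of $Sol_X$ into perverse sheaves (the Riemann--Hilbert correspondence), not the index formula.
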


\begin{proof}
By Proposition \ref{thm-1} we can easily show that on 
$U\subset X$ we have an equality 
\begin{align}
\chi(Sol_X(\SM))&= r \cdot \mathbbm{1}_X
+\sum_{i=1}^l(\irr_{D_i^\circ}(\SM)+r)\cdot \mathbbm{1}_{D_i} \\
&\cdots \cdots \cdots \\
&+(\irr_{D_1^\circ}(\SM)+\cdots+\irr_{D_l^\circ}(\SM)+r)
\cdot \mathbbm{1}_{D_1\cap\cdots\cap D_l}.
\end{align}
Indeed, it suffices to use the binomial identities 
\begin{equation}
(1-1)^m=1-\binom{m}{1}+\binom{m}{2}-\cdots+(-1)^m\binom{m}{m}=0
\end{equation} 
for positive integers $m\geq1$.
Then the assertion immediately follows from 
Kashiwara's index theorem for holonomic $\SD$-modules (see \cite{Kas83}).
\end{proof}

\begin{definition}
Let $X$ be a complex manifold and $Y\subset X$ a closed hypersurface in it.
Then we say that a holonomic $\SD_X$-module $\SM$ is 
a meromorphic connection along $Y\subset X$ if 
\begin{enumerate}[noitemsep]
\item [\rm (i)] $\SM\simto\SM(\ast Y)$
\item [\rm (ii)] $\ssupp(\SM)\subset Y$.
\end{enumerate}
\end{definition}

We recall the following fundamental result obtained by 
Kedlaya and Mochizuki.

\begin{theorem}[\cite{Ked10, Ked11, Moc11}]\label{thm-MK}
For a holonomic $\SD_X$-module $\SM$ and $x\in X$,
there exist an open neighborhood $U$ of $x$, 
a closed hypersurface $Y\subset U$,
a complex manifold $X'$ and
a projective morphism $\nu : X'\to U$ such that
\begin{enumerate}[noitemsep]
\item [\rm (i)] $\rm{sing.supp} (\SM)\cap U\subset Y$,
\item [\rm (ii)] $D:= \nu^{-1}(Y)$ is a normal crossing divisor in $X'$,
\item [\rm (iii)] $\nu$ induces an isomorphism $X'\bs D\simto U\bs Y$,
\item [\rm (iv)] $(\bfD \nu^\ast\SM)(\ast D)$ has a quasi-normal form along $D$.
\end{enumerate}
\end{theorem}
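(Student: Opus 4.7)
The plan is to prove this in three stages, following the architecture of Kedlaya's and Mochizuki's work. Since $\SM$ is holonomic, its singular support $\ssupp(\SM)$ is a closed analytic subset of $X$ that is a hypersurface near any point. First I would shrink $U$ around $x$ so that $\ssupp(\SM)\cap U$ is contained in a closed hypersurface $Y\subset U$; this is the routine step and gives (i) immediately.

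Next, to achieve (ii) and (iii) I would apply Hironaka's embedded resolution of singularities to the pair $(U,Y)$. This produces a projective birational morphism $\nu_0\colon X'_0\to U$ such that $D_0:=\nu_0^{-1}(Y)$ is a normal crossing divisor in $X'_0$ and $\nu_0$ restricts to an isomorphism $X'_0\bs D_0\simto U\bs Y$. The pullback $\SM'_0:=(\bfD\nu_0^{\ast}\SM)(\ast D_0)$ is then a holonomic $\SD_{X'_0}$-module which is a meromorphic connection along the normal crossing divisor $D_0$.

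The genuinely difficult step is (iv): ensuring that $\SM'_0$ has a quasi-normal form along $D_0$ in the sense of Definition \ref{def-B}. In general $\SM'_0$ will have turning points, i.e.\ loci of $D_0$ where the formal Levelt-Turrittin type decomposition of $\SM'_0$ fails to extend continuously. My plan would be to analyze the irregularity of $\SM'_0$ along the strata of $D_0$ and perform a further finite sequence of blow-ups along smooth centres contained in $D_0$, chosen so as to decrease a well-chosen well-founded complexity invariant built from the Newton polygons of the formal structure of $\SM'_0$ along the components of $D_0$ (equivalently, from Sabbah's irregularity sheaf). After finitely many such blow-ups one obtains a projective birational $\nu\colon X'\to U$ factoring through $\nu_0$, with $D=\nu^{-1}(Y)$ normal crossing, such that the exponential factors of $(\bfD\nu^{\ast}\SM)(\ast D)$ at each point of $D$ form a good set in the sense of Mochizuki after a suitable ramification, yielding the quasi-normal form.

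The main obstacle is of course this last step. The one-dimensional case is the classical Levelt-Turrittin theorem, but in higher dimension turning points are a genuine phenomenon, and the proof that they can always be resolved after a controlled sequence of blow-ups is precisely the central technical achievement of Kedlaya \cite{Ked10,Ked11} and Mochizuki \cite{Moc11}. Carrying it out requires a delicate induction on $\dim X$ combined with a careful analysis of how irregular values transform under blow-ups and ramified covers, which goes well beyond what one could realistically reproduce here; accordingly I would simply invoke their theorem, as the authors do.
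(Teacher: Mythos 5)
Your proposal correctly identifies the architecture of the proof: shrinking $U$ so that $\ssupp(\SM)$ lies in a hypersurface for (i), embedded resolution of $(U,Y)$ to obtain a normal crossing exceptional divisor for (ii)--(iii), and the deep Kedlaya--Mochizuki resolution of turning points for (iv). The paper gives no proof of this theorem and simply cites \cite{Ked10, Ked11, Moc11}, which is exactly what you conclude should be done; your sketch is a fair summary of the standard argument and agrees with how the paper treats the statement.
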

This is a generalization of the classical 
Hukuhara-Levelt-Turrittin theorem to higher dimensions. 
By Proposition \ref{thm-1} and Theorem \ref{thm-MK} we obtain a method to 
calculate the characteristic cycles of meromorphic connections as follows.
Let $X$ be a complex manifold and $\SM$ a meromorphic connection along 
a closed hypersurface $Y\subset X$  in $X$.
The problem being local, we may replace $X$ by a neighborhood of 
a point $x\in Y$ and assume that there exists 
a projective morphism $\nu\colon X^\prime\longrightarrow X$ of 
a complex manifold $X^\prime$ such that 
\begin{enumerate}[noitemsep]
\item [\rm (i)] $D\coloneq \nu^{-1}(Y)\subset X^\prime$ is 
a normal crossing divisor in $X^\prime$,
\item [\rm(ii)] $\nu$ induces an isomorphism $X^\prime\setminus D\simto X\setminus Y$,
\item [\rm (iii)] the meromorphic connection 
$\bfD\nu^\ast\SM\simto (\bfD\nu^\ast\SM)(\ast D)$ on $X^\prime$ 
along $D$ has a quasi-normal form along $D$. 
\end{enumerate}
In this situation, by Proposition \ref{thm-1} we have a formula for 
$\chi(Sol_{X^\prime}(\bfD\nu^\ast\SM))\in \CF_\ZZ(X^\prime)$.
Moreover there exists an isomorphism 
$\SM\simeq\bfD\nu_\ast(\bfD^\ast\nu\SM)$ and hence 
by Lemma \ref{lem:comut-1} we obtain an equality 
\begin{equation}
\chi(Sol_X(\SM))=\int_\nu\chi(Sol_{X^\prime}(\bfD\nu^\ast\SM)),
\end{equation}
where $\displaystyle\int_\nu\colon\CF_\ZZ(X^\prime)
\longrightarrow\CF_\ZZ(X)$ stands for the push-forward of 
$\ZZ$-valued constructible functions by the morphism 
$\nu\colon X^\prime\longrightarrow X$.
Then we obtain a formula for the characteristic cycle 
$\CCyc(\SM)$ of $\SM$ by Kashiwara's index theorem (see \cite{Kas83}).

\begin{example}\label{ex-CC}
\begin{enumerate}[wide,labelwidth=!,labelindent=0pt]
\item [\rm (i)]
Consider the case where $X$ is the complex plane $\CC_z^2$ endowed with the 
standard coordinate $z=(z_1,z_2)=(x,y)$.
Then for the closed hypersurface $Y\coloneq\{x=0\}\subset X=\CC^2$ 
and the meromorphic function 
\begin{equation}
\varphi(x,y)\coloneq \frac{y}{x} \qquad \bigl((x,y)\in X\setminus Y\bigr)
\end{equation} 
on $X=\CC^2$ along $Y\subset X$ we set $\SM\coloneq 
\SE_{X\setminus Y\vbar X}^\varphi\in \Modhol(\SD_X)$.
In this case, $\varphi$ has a point of indeterminacy at the origin 
$\{0\}=\{x=y=0\}\subset X$ and hence $\SM$ does not have a quasi-normal form.
Let $\nu\colon X^\prime\longrightarrow X$ be the blow-up of $X$ along the origin 
and $E\coloneq \nu^{-1}(0)\simeq \PP^1$ its exceptional 
divisor and set $D\coloneq \nu^{-1}(Y)\subset X^\prime$. 
We denote by $\tl{Y}\subset D\subset X^\prime$ (resp. $\{y=0\}^\sim \subset 
X^\prime$) the proper transform of $Y=\{x=0\}\subset X$ (resp. $\{y=0\}\subset X$) in $X^\prime$. 
Then $D=\nu^{-1}(Y)=\tl{Y}\cup E$ and $D\cup\{y=0\}^\sim$ are normal 
crossing divisors in $X^\prime$ and the meromorphic function $\varphi\circ\nu\colon 
X^\prime\setminus D\longrightarrow\CC$ on $X^\prime$ along $D\subset X^\prime$ 
has no point of indeterminacy on the whole $X^\prime$. 
Indeed, the pole orders of $\varphi\circ\nu$ along the smooth divisors 
$\tl{Y}, E, \{y=0\}^\sim\subset X^\prime$ are $1,0,-1$ respectively.
By Proposition \ref{prop-1}, this implies that for the meromorphic 
connection $\bfD\nu^\ast\SM\simeq\SE_{X^\prime\setminus 
D\vbar X^\prime}^{\varphi\circ\nu}$ and a point $z^\prime\in X^\prime$ on $X^\prime$ we have 
\begin{equation}
\chi(Sol_{X^\prime}(\bfD\nu^\ast\SM))(z^\prime)=
\begin{cases}
1 & (z^\prime\in X^\prime\setminus D), \\
\\
-1 & (z^\prime\in D\setminus E=\tl{Y}\setminus E), \\
\\
0 & (z^\prime\in E).
\end{cases}
\end{equation}
We thus obtain 
\begin{equation}
\chi(Sol_X(\SM))(0)=\int_{\nu^{-1}(0)=E}\chi(Sol_{X^\prime}(\bfD\nu^\ast\SM))=0
\end{equation}
and hence 
\begin{equation}
\chi(Sol_X(\SM))= 1\cdot\mathbbm{1}_X -2\cdot\mathbbm{1}_Y +1\cdot\mathbbm{1}_{\{0\}}.
\end{equation}
Since $Y=\{x=0\}$ is smooth and its Euler obstruction $\Eu_Y$ is equal to 
$\mathbbm{1}_Y$, by Kashiwara's index theorem we finally obtain
\begin{equation}
\CCyc(\SM)= 1\cdot[T_X^\ast X] + 2\cdot[T_Y^\ast X] +1\cdot[T_{\{0\}}^\ast X].
\end{equation}
\item  [\rm (ii)]
In the situation of (i), for the closed hypersurface 
$Y=\{x=0\}\subset X=\CC^2$ and $k\in \ZZ_{>0}$ let us consider the meromorphic function 
\begin{equation}
\varphi(x,y)\coloneq \frac{y^k}{x} \qquad \bigl((x,y)\in X\setminus Y\bigr)
\end{equation}
on $X=\CC^2$ along $Y\subset X$ and set $\SM\coloneq 
\SE_{X\setminus Y\vbar X}^\varphi\in\Modhol(\SD_X)$.
Also in this case, as in the proof of \cite[Theorem 3.1 (i)]{MT11} 
we can construct a sequence of blow-ups along a point 
\begin{equation}
X_k \overset{\nu_k}{\longrightarrow} 
X_{k-1} \overset{\nu_{k-1}}{\longrightarrow}
\cdots
\overset{\nu_2}{\longrightarrow} X_1
\overset{\nu_1}{\longrightarrow} X
\end{equation} 
such that for their composition 
$\nu\coloneq\nu_k\circ\cdots\circ\nu_1\colon X^\prime\coloneq X_k
\longrightarrow X$ the meromorphic function $\varphi\circ\nu$ on $X^\prime$ 
has no point of indeterminacy on the whole $X^\prime$.
Moreover its pole order along the proper transform $\tl{E_1}(\simeq\PP^1)
\subset X^\prime$ of the first exceptional divisor $E_1\coloneq\nu_1^{-1}(0)
\subset X_1$ in $X^\prime$ is $0$.
We thus obtain $\chi(Sol_X(\SM))(0)=0$ and as in (i) we see that
\begin{equation}
\CCyc(\SM)= 1\cdot[T_X^\ast X] + 2\cdot[T_Y^\ast X] +1\cdot[T_{\{0\}}^\ast X].
\end{equation}
also in this case.   
\item [\rm (iii)]
In the situation of (i), for the closed hypersurface $Y=\{x=0\}\subset 
X=\CC^2$ and $k\in \ZZ_{>0}$ let us consider the meromorphic function 
\begin{equation}
\varphi(x,y)\coloneq \frac{y}{x^k} \qquad \bigl((x,y)\in X\setminus Y\bigr)
\end{equation}
on $X=\CC^2$ along $Y\subset X$ and set $\SM\coloneq \SE_{X\setminus 
Y\vbar X}^\varphi\in\Modhol(\SD_X)$.
In this case, for the morphism $\nu=\nu_k\circ\cdots\circ\nu_1\colon 
X^\prime\longrightarrow X$ in (ii) the meromorphic function $\varphi\circ\nu$ 
on $X^\prime$ has no point of indeterminacy on the whole $X^\prime$.
For $1\leq i\leq k$ we denote the proper transform of the exceptional 
divisor $E_i (\simeq\PP^1) \subset X_i$ of the $i$-th blow-up $\nu_i$ 
in $X^\prime$ by $\tl{E_i}(\simeq\PP^1)\subset X^\prime$. 
Then for $1\leq i\leq k$ the pole order of the meromorphic function 
$\varphi\circ\nu$ along $\tl{E_i}\simeq\PP^1$ is equal to $k-i$ in this case.
Nevertheless, by using the fact that the Euler characteristic of 
$\PP^1$ minus $2$ points is equal to 0, we can easily show that 
\begin{equation}
\chi(Sol_X(\SM))(0) =\int_{\nu^{-1}(0)}\chi(Sol_{X^\prime}(\bfD\nu^\ast\SM))=0.
\end{equation}
Then as in (i) and (ii), by Kashiwara's index theorem we finally obtain 
\begin{equation}
\CCyc(\SM)= 1\cdot[T_X^\ast X] + (k+1)\cdot[T_Y^\ast X] +k\cdot[T_{\{0\}}^\ast X].
\end{equation}
\item [\rm (iv)]
In the situation of (i), for the closed hypersurface $Y=\{x^2-y^3=0\}
\subset X=\CC^2$ let us consider the meromorphic function 
\begin{equation}
\varphi(x,y)\coloneq \frac{1}{x^2-y^3} \qquad \bigl((x,y)\in X\setminus Y\bigr)
\end{equation}
on $X=\CC^2$ along $Y\subset X$ and set $\SM\coloneq \SE_{X\setminus 
Y\vbar X}^\varphi\in\Modhol(\SD_X)$.
In this case, in a standard way we can construct a sequence of blow-ups along a point 
\begin{equation}
X_3 \overset{\nu_3}{\longrightarrow} 
X_2 \overset{\nu_2}{\longrightarrow}
X_1 \overset{\nu_1}{\longrightarrow} X
\end{equation} 
such that for their composition $\nu\coloneq \nu_3\circ\nu_2\circ\nu_1\colon 
X^\prime\coloneq X_3\longrightarrow X$ the divisor $D\coloneq\nu^{-1}(Y)
\subset X^\prime$ is normal crossing.
For $1\leq i\leq 3$ we denote the proper transform of the exceptional 
divisor $E_i (\simeq\PP^1) \subset X_i$ of the $i$-th blow-up $\nu_i$ in $X^\prime$ 
by $\tl{E_i}(\simeq\PP^1)\subset X^\prime$.
Then the pole orders of the meromorphic function $\varphi\circ\nu$ on 
$X^\prime$ along $\tl{E_1},\tl{E_2},\tl{E_3}$ are equal to $2,3,6$ respectively.
Moreover obviously its pole order along the proper transform 
$\tl{Y}\subset X^\prime$ of $Y\subset X$ in $X^\prime$ is equal to $1$.
In Figure \ref{fig:1} below we show how the irreducible components 
of the normal crossing divisor
\begin{equation}
D:=\nu^{-1}(Y)=\tl{E_1}\cup\tl{E_2}\cup\tl{E_3}\cup\tl{Y}
\end{equation}
intersect each other.
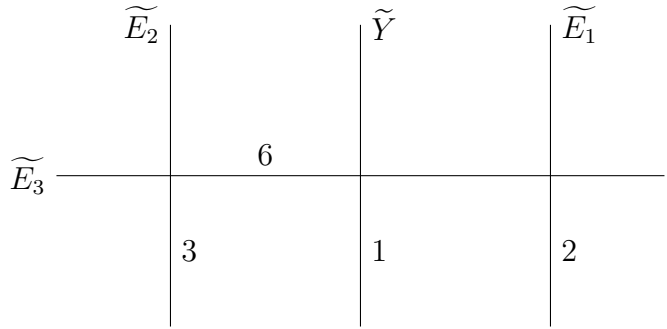
\begin{figure}[b] 
\centering
\begin{tikzpicture}[scale=0.5]
\coordinate (O) at (0,0);
\coordinate (Xm) at (-8,0);
\coordinate (XM) at (8,0);
\coordinate (Xp) at (-2.5,0);

\coordinate (Y1m) at (-5,-4);
\coordinate (Y1M) at (-5,4);
\coordinate (Y1p) at (-5,-2);

\coordinate (Y2m) at (0,-4);
\coordinate (Y2M) at (0,4);
\coordinate (Y2p) at (0,-2);

\coordinate (Y3m) at (5,-4);
\coordinate (Y3M) at (5,4);
\coordinate (Y3p) at (5,-2);

\draw (Xm)--(XM);
\draw (Y1m)--(Y1M);
\draw (Y2m)--(Y2M);
\draw (Y3m)--(Y3M);

\draw (Xm) node[left]{$\tl{E_3}$};
\draw (Y1M) node[left]{$\tl{E_2}$};
\draw (Y2M) node[right]{$\tl{Y}$};
\draw (Y3M) node[right]{$\tl{E_1}$};

\draw (Xp) node[above]{$6$};
\draw (Y1p) node[right]{$3$};
\draw (Y2p) node[right]{$1$};
\draw (Y3p) node[right]{$2$};
\end{tikzpicture} 
\caption{Irreducible components of $D$ and pole orders of $\varphi\circ\nu$.}
\label{fig:1}
\end{figure}
The number attached to each irreducible component in Figure 
\ref{fig:1} stands for the pole order of $\varphi\circ\nu$ along it.
Then by Proposition \ref{prop-1} for the closed subset 
$\nu^{-1}(0)=\tl{E_1}\cup\tl{E_2}\cup\tl{E_3}\subset D$ we obtain 
\begin{align}
\chi(Sol_X(\SM))(0)&=\int_{\nu^{-1}(0)}\chi(Sol_{X^\prime}(\bfD\nu^\ast\SM)) \notag \\
&= (-2)\cdot(2-1)+(-3)\cdot(2-1)+(-6)\cdot(2-1-1-1) \notag \\
&= 1.
\end{align}
On the other hand, by Kashiwara's formula for the Euler 
obstructions of complex hypersurfaces having only isolated 
singular points (see \cite{Kas83}) for a point $(x,y)\in X$ of $X$ we have 
\begin{equation}
\Eu_Y((x,y))=
\begin{cases}
0 & ((x,y) \notin Y), \\
\\
1 & ((x,y)\in Y\setminus\{0\}), \\
\\
2 & ((x,y)=0=(0,0)).
\end{cases}
\end{equation}
We thus obtain
\begin{equation}
\chi(Sol_X(\SM))=1\cdot\mathbbm{1}_X -2\cdot\Eu_Y + 4\cdot\mathbbm{1}_{\{0\}}
\end{equation}
hence 
\begin{equation}
\CCyc(\SM)= 1\cdot[T_X^\ast X] +2\cdot[\overline{T_{Y_{\reg}}^\ast X}] +4\cdot[T_{\{0\}}^\ast X]
\end{equation}
in this case.
\end{enumerate}
\end{example}

\section{An index formula for irregular connections}\label{sec-index}
In this section, by Proposition \ref{thm-1} we obtain an index formula for 
irregular integrable connections, which expresses the global 
Euler-Poincar{\'e} indices of their algebraic de Rham complexes.
This is a higher-dimensional analogue of a result of Bloch-Esnault 
\cite{BE04a}, but our proof here will be very different from that of \cite{BE04a}.
Whereas the proof of Bloch and Esnault relies on the rapid decay 
homology groups of irregular connections introduced by them 
(see Hien \cite{Hi09} for the higher-dimensional case), we prove our 
index formula by the results on the solution complexes to irregular 
holonomic $\SD$-modules with a quasi-normal form in Section \ref{sec:qNor}.
Let $U$ be a not necessarily complete and smooth algebraic variety 
over $\CC$ and $\SN$ an algebraic integrable connection on it.
Throughout this section we will be interested in its algebraic de Rham complex 
\begin{align}
& \DRalg_U(\SN) \coloneq \Omega_U\Lotimes{\SD_U} \SN \\
& \simeq \Bigl[\cdots \longrightarrow 0 \longrightarrow \SN
\longrightarrow \SN\otimes_{\SO_U}\Omega^1_U \longrightarrow \cdots 
\longrightarrow \SN\otimes_{\SO_U}\Omega^{\dim U}_U
\longrightarrow 0 \longrightarrow 
 \cdots\Bigr]
\end{align}
and its global Euler-Poincar{\'e} index 
\begin{equation}
\chi(U; \DRalg_U(\SN)) \coloneq \chi (\rsect(U;\DRalg_U(\SN))) \quad \in\ZZ.
\end{equation} 
For $j \in \ZZ$ we set 
\begin{equation}
H^j_{{\rm DR}}(U; \SN )
 \coloneq H^j (U; \DRalg_U(\SN)[- \dim U] ) 
\end{equation} 
and call it the $j$-th algebraic de Rham cohomology of the 
connection $\SN$. Then we define the index $\chi^{{\rm alg}}( \SN ) \in \ZZ$ of 
$\SN$ to be the global Euler-Poincar{\'e} index of 
the shifted algebraic de Rham complex $\DRalg_U(\SN) [- \dim U]$ as 
\begin{equation}
\chi^{{\rm alg}}( \SN ) \coloneq 
\dsum_{j \in \ZZ} (-1)^j \dim H^j_{{\rm DR}}(U; \SN )
\quad \in\ZZ. 
\end{equation} 

\begin{proposition}\label{pro-eit} 
Let $i_U\colon U\hookrightarrow X$ be a smooth compactification of $U$ such 
that $D\coloneq X\setminus U\subset X$ is a not necessarily normal crossing 
divisor in $X$ and $X^\an$ the underlying 
complex manifold of $X$ endowed with the classical topology.
Then for the algebraic meromorphic connection $\SM\coloneq 
i_{U\ast}\SN\simeq\displaystyle\int_{i_U}\SN\in\Modhol(\SD_X)$ on $X$ we have
\begin{equation}
\chi^{{\rm alg}}( \SN )
 =\int_{X^\an}\chi(Sol_X(\SM)),
\end{equation}
where $\displaystyle\int_{X^\an}\colon \CF_{\ZZ}(X^\an)\longrightarrow
\ZZ$ stands for the topological (Euler) integral 
of $\ZZ$-valued constructible functions on $X^\an$.
\end{proposition}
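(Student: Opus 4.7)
The plan is to reduce $\chi^{\mathrm{alg}}(\SN)$ to a topological Euler integral on the compact manifold $X^{\an}$ in three steps: an algebraic-to-analytic comparison via GAGA, a global-cohomology-to-Euler-integral reduction via compactness, and a pointwise passage from $DR_X$ to $Sol_X$ via Verdier duality.

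First, I would apply the direct image theorem for D-modules to the open immersion $i_U\colon U\hookrightarrow X$. Since $\SM\simeq\int_{i_U}\SN$ and the de Rham functor commutes with D-module direct image, one has
\begin{equation}
\rsect(U;\DRalg_U(\SN))\simeq\rsect(X;\DRalg_X(\SM)).
\end{equation}
As $X$ is smooth and projective and $\SM$ is an algebraic holonomic $\SD_X$-module, GAGA for algebraic D-modules (obtained by choosing a good $\SO_X$-coherent filtration of $\SM$ and applying classical Serre GAGA term by term) gives
\begin{equation}
\rsect(X;\DRalg_X(\SM))\simeq\rsect(X^{\an};DR_X(\SM^{\an})).
\end{equation}
Since $X^{\an}$ is compact, Lemma \ref{lem:comut-1} applied to the proper morphism $X^{\an}\longrightarrow\pt$ then yields
\begin{equation}
\chi\bigl(\rsect(X^{\an};DR_X(\SM))\bigr)=\int_{X^{\an}}\chi(DR_X(\SM)).
\end{equation}

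Second, I would convert $\chi(DR_X(\SM))$ into $\chi(Sol_X(\SM))$ pointwise. By the isomorphism $Sol_X(\SM)[d_X]\simeq DR_X(\DD_X\SM)$ recalled in Section \ref{sec:7} together with Kashiwara's theorem that $DR$ commutes with Verdier duality for holonomic D-modules, the constructible complexes $DR_X(\SM)$ and $Sol_X(\SM)[d_X]$ are Verdier dual on $X^{\an}$. Since the pointwise Euler characteristic of a constructible complex is invariant under sheaf-theoretic Verdier duality, one obtains
\begin{equation}
\chi(DR_X(\SM))(x)=(-1)^{d_X}\chi(Sol_X(\SM))(x)\qquad(x\in X^{\an}).
\end{equation}

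Third and finally, the definition of $\chi^{\mathrm{alg}}(\SN)$ combined with the shift $[-\dim U]$ in $H^j_{\mathrm{DR}}(U;\SN)$ gives $\chi^{\mathrm{alg}}(\SN)=(-1)^{\dim U}\chi\bigl(\rsect(U;\DRalg_U(\SN))\bigr)$. Combining with the three reductions above and using $\dim U=d_X$ (because $U$ is open in $X$) to cancel the signs $(-1)^{\dim U}$ and $(-1)^{d_X}$ yields the desired formula $\chi^{\mathrm{alg}}(\SN)=\int_{X^{\an}}\chi(Sol_X(\SM))$. The main obstacle is really just justifying the GAGA comparison for a possibly irregular algebraic holonomic D-module $\SM$ on the smooth projective variety $X$; this is classical but requires care with a good filtration. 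Compared to the approach of Bloch--Esnault, the argument avoids rapid decay homology entirely and relies only on the constructibility of $Sol_X(\SM)$ established in Proposition \ref{thm-1}.
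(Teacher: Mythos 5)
Your proof is correct and, at heart, uses the same three ingredients as the paper's proof: the algebraic--analytic comparison theorem for D-module direct images under proper morphisms (essentially \cite[Proposition~4.7.5]{HTT08}, which you present as a GAGA argument), the Verdier duality relating $DR_X$ and $Sol_X$ for holonomic D-modules, and the Euler-integral identity (Lemma~\ref{lem:comut-1}). The difference is organizational. The paper performs all the duality \emph{globally}: after reducing to $\int_{a_X}\SM$, it dualizes on the one-point space (where Euler-characteristic invariance under $\rHom(\cdot,\CC)$ is tautological), then uses commutation of $\DD$ with proper pushforward $\rmR(a_X)^{\an}_\ast$ to land directly on $\rsect(X^{\an}; Sol_X(\SM))$. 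You instead compute $\int_{X^{\an}}\chi(DR_X(\SM))$ and then convert it to $\int_{X^{\an}}\chi(Sol_X(\SM))$ \emph{pointwise}, invoking that the local Euler--Poincar\'e index of a $\CC$-constructible complex is invariant under Verdier duality. That fact is true, but it is not free: it follows, for instance, from Kashiwara's local index theorem in \cite{Kas83} combined with the observations that $\CCyc(\DD_X F)$ is the antipodal image of $\CCyc(F)$ and that for a $\CC$-constructible complex the characteristic cycle is a sum of conormal bundles to complex subvarieties, which are preserved (as oriented cycles, since the antipodal map on a complex vector space has determinant $+1$) by the antipodal map. You should supply this justification or a citation; the paper sidesteps it entirely by working globally. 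Also, your GAGA ``term by term along a good filtration'' argument should be packaged more carefully as a spectral-sequence argument since the terms of $\DRalg_X(\SM)$ are only quasi-coherent, not $\SO_X$-coherent; but this is the same comparison theorem the paper cites, so it is not an essential gap.
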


\begin{proof}
For the one point set $\{\pt\}$ 
let us consider the commutative diagram 
\begin{equation}
\vcenter{
\xymatrix@M=7pt@C=36pt@R=24pt{
U \ar@{^{(}->}[r]_-{i_U} \ar[rd]_-{a_U} & X \ar[d]_-{a_X} \\
& \{\pt\}.
}}
\end{equation}   
Then we have isomorphisms
\begin{align}
\rsect(U;\DRalg_U(\SN)[- \dim U]) 
&\simeq \rsect(U; \SD_{\{\pt\}\leftarrow U}\Lotimes{\SD_U}\SN \ [-\dim U]) \\
&\simeq \rmR a_{U\ast}(\SD_{\{\pt\}\leftarrow U}\Lotimes{\SD_U}\SN \ [-\dim U]) \\
&\simeq (\int_{a_U}\SN) \ [-\dim U].
\end{align}
Moreover by \cite[Thoerem 3.2.3 (i)]{HTT08} we have 
$\displaystyle\int_{a_U}\SN\in\BDChol(\SD_{\{\pt\}})$ i.e. the 
cohomology groups of the complex $\displaystyle\int_{a_U}\SN$ are 
finite dimensional vector spaces over $\CC$.
This implies that for the dual 
\begin{align}
(\rsect(U;\DRalg_U(\SN)[- \dim U]))^\ast 
&\coloneq
\rHom_{\CC}(\rsect(U;\DRalg_U(\SN)[- \dim U]),\CC) \\
&\simeq Sol_{\{\pt\}}(\int_{a_U}\SN) \ [\dim U]
\end{align}
of the complex $\rsect(U;\DRalg_U(\SN)[- \dim U])$ we have an equality 
\begin{equation}
\chi(\rsect(U;\DRalg_U(\SN)[- \dim U]))=\chi((\rsect(U;\DRalg_U(\SN)[- \dim U]))^\ast).
\end{equation}
We thus obtain 
\begin{align}
\chi^{{\rm alg}}( \SN )
&=\chi(Sol_{\{\pt\}}(\int_{a_U}\SN) \ [\dim U]) \\
&=\chi(Sol_{\{\pt\}}(\int_{a_X}\SM) \ [\dim X]). \label{eq:chiX}
\end{align}
On the other hand, by \cite[Proposition 4.7.5]{HTT08} for the holonomic 
$\SD_X$-module $\SM$ and the proper map 
$(a_X)^\an\colon X^\an\longrightarrow\{\pt\}$ there exists an isomorphism
\begin{equation}
DR_{\{\pt\}}(\int_{a_X}\SM)\simeq \rmR(a_X)^\an_\ast DR_X(\SM).
\end{equation}
Applying the Verdier dual functor $\bfD_{\{\pt\}}\simeq(\cdot)^\ast$ 
on the one point set $\{\pt\}$ to it, we obtain isomorphisms
\begin{align}
Sol_{\{\pt\}} (\int_{a_X}\SM) 
\simeq \rmR(a_X)^\an_\ast \bfD_{X^\an}(DR_X(\SM)) \\
\simeq \rsect(X^\an; Sol_X(\SM) \ [\dim X]).
\end{align}
Then by (\ref{eq:chiX}) we obtain the assertion as follows:
\begin{align}
\chi^{{\rm alg}}( \SN )
&=\chi(\rsect(X^\an; Sol_X(\SM) \ [2\dim X])) \\
&=\chi(\rsect(X^\an; Sol_X(\SM))) \\
&=\int_{X^\an}\chi(Sol_X(\SM)).
\end{align}
\end{proof}

By Proposition \ref{thm-1} we can rewrite 
Proposition \ref{pro-eit} more explicitly as follows.  
Recall that a hypersurface $D$ in a smooth algebraic variety $X$ is called 
a strict normal crossing divisor if its irreducible components are smooth. 

\begin{theorem}\label{th-exitm} 
Let $i_U\colon U\hookrightarrow X$ be a smooth compactification of $U$ such 
that $D\coloneq X\setminus U\subset X$ is a strict normal crossing 
divisor in $X$ and the algebraic meromorphic connection $\SM\coloneq 
i_{U\ast}\SN\simeq\displaystyle\int_{i_U}\SN\in\Modhol(\SD_X)$ on $X$ has 
a quasi-normal form along it. Such a smooth compactification of $U$ 
always exists thanks to the theory of Mochizuki \cite{Moc11} 
(see also Hien \cite{Hi09} for similar use of Mochizuki's results). 
Let $D= \displaystyle\bigcup_{i \in I} D_i$ be the 
irreducible decomposition of $D$ and for each $i \in I$ define 
an open subset $D_i^{\circ} \subset D_i$ of $D_i$ by 
\begin{equation}
D_i^{\circ}:= D_i \setminus \Bigl( \bigcup_{j \not= i} D_j \Bigr) 
\qquad \subset D_i. 
\end{equation}
Then we have
\begin{equation}
\chi^{{\rm alg}}( \SN ) = {\rm rk} \SN \cdot \chi (U^{\an}) - 
\dsum_{i \in I} {\rm irr}_{D_i^{\circ}} ( \SM ) \cdot 
\chi ( (D_i^{\circ})^{\an} ), 
\end{equation}
where ${\rm rk} \SN \in \ZZ_{\geq 0}$ stands for the 
rank of the integrable connection $\SN$. 
\end{theorem}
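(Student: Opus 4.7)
The plan is to reduce the theorem to a topological computation via Proposition \ref{pro-eit} and then evaluate the resulting Euler integral stratum-by-stratum using Proposition \ref{thm-1}. First I would apply Proposition \ref{pro-eit} to rewrite
\[\chi^{{\rm alg}}(\SN) = \int_{X^\an}\chi(Sol_X(\SM)),\]
so that it suffices to compute the topological Euler integral of the constructible function $\chi(Sol_X(\SM))$ on $X^\an$.

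Next I would stratify $X^\an$ into the three natural layers arising from the strict normal crossing divisor $D$: the open part $U^\an$; the strata $(D_i^{\circ})^\an$ for $i\in I$, where each $D_i^{\circ}$ is obtained from $D_i$ by removing its intersections with the other components; and the union $Z$ of all deeper strata, which is covered by intersections $D_{i_1}\cap\cdots\cap D_{i_k}$ with $k\geq 2$. Because $\SM$ has a quasi-normal form along $D$, Proposition \ref{thm-1}(i) guarantees that $\chi(Sol_X(\SM))$ is constructible with respect to this stratification, and so by Lemma \ref{lem:2-str-1} the topological Euler integral decomposes additively over the three layers.

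To finish the computation I would pin down $\chi(Sol_X(\SM))$ on each stratum. On $U^\an$ one has $\SM|_U\simeq\SN$, which is an integrable connection of rank $r:={\rm rk} \SN$, so $Sol_X(\SM)|_{U^\an}\simeq Sol_U(\SN)$ is a local system of rank $r$ concentrated in degree zero, taking the constant value $r$. At a point of $(D_i^{\circ})^\an$ the divisor $D$ is locally smooth (case $l=1$ of Proposition \ref{thm-1}), so part (ii) of that proposition yields $\chi(Sol_X(\SM))(x) = -{\rm irr}_{D_i^{\circ}}(\SM)$, a quantity which is constant on the connected stratum $D_i^{\circ}$ by its definition via the multiplicity sheaf recalled after Proposition \ref{hda-dk}. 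On $Z$ one has $l\geq 2$ locally, so Proposition \ref{thm-1}(iii) gives $\chi(Sol_X(\SM))(x) = 0$.

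Assembling the three contributions via the definition of the topological Euler integral then gives
\[\int_{X^\an}\chi(Sol_X(\SM)) = r\cdot\chi(U^\an) - \sum_{i\in I}{\rm irr}_{D_i^{\circ}}(\SM)\cdot\chi((D_i^{\circ})^\an),\]
which is the claimed identity. The argument is essentially bookkeeping once the local computations of Section \ref{sec:qNor} are in hand, so I expect no serious obstacle; the only small point that will require an explicit check is the constancy of ${\rm irr}_{D_i^{\circ}}(\SM)$ along the connected stratum $D_i^{\circ}$, which is needed to pull this scalar out of the Euler integral over $(D_i^{\circ})^\an$.
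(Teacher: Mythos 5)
Your proposal is correct and coincides with the paper's own (implicit) argument: the paper introduces the theorem with ``By Proposition~\ref{thm-1} we can rewrite Proposition~\ref{pro-eit} more explicitly as follows'' and then gives no further proof, since the deduction is exactly the Euler-integral bookkeeping you carry out — value $r$ on $U^{\an}$ from $\SM|_U\simeq\SN$, value $-\irr_{D_i^\circ}(\SM)$ on each $(D_i^\circ)^{\an}$ from Proposition~\ref{thm-1}(ii), and value $0$ on the deeper strata from Proposition~\ref{thm-1}(iii). Your ``small point'' about the constancy of $\irr_{D_i^\circ}(\SM)$ along $D_i^\circ$ is exactly the fact recorded in the discussion after Proposition~\ref{hda-dk}, where the irregularity along a smooth connected stratum is shown to be a well-defined integer, and $(D_i^\circ)^{\an}$ is connected because $D_i$ is irreducible; so this is not a gap.
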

In the case $\dim U=1$ this theorem was first obtained by 
Bloch and Esnault in \cite{BE04a}.

\section{Ginsburg type formulas for characteristic cycles}\label{sec-Gins}
In this section, for some standard 
holonomic D-modules, we define (not necessarily homogeneous) 
Lagrangian cycles that we call irregular characteristic cycles 
and use them to prove Ginsburg type 
formulas for their (usual) characteristic cycles 
similar to the one in Ginsburg \cite[Theorem 3.3]{Gin86}. 
First of all, we recall the definition of the 
irregular characteristic cycles introduced
by \cite{Tak22} and \cite{KT23} and reformulate the result 
of Corollary \ref{thm-CCquasi} in terms of them.
Let $X$ be a complex manifold, $D\subset X$ a normal crossing divisor in it and $\SM$
a holonomic $\SD_X$-module having a quasi-normal form along $D\subset X$.
Then for any point $x\in X$ there exists its neighborhood $U \subset X$ in $X$ 
for which we can define a (not necessarily homogeneous)
Lagrangian cycle $\CCirr(\SM)$ in the open subset $T^\ast U\subset T^\ast X$ as follows.
First, for a (sufficiently small) open sector $V\subset U\setminus D$ along the normal
crossing  divisor $D\cap U\subset U$ we take the exponential factors 
$f_1,\ldots,f_p\in\SP_{\varpi^{-1}(D\cap U)}^\prime$ of $\SM$ in the 
representative subsheaf $\SP_{\varpi^{-1}(D\cap U)}^\prime\subset 
\SP_{\varpi^{-1}(D\cap U)}$ (see Section \ref{sec:qNor}) which are holomorphic on $V$ and set
\begin{equation}
\Lambda(\SM,V)_i\coloneq
\Set*{(x,df_i(x))}{x\in V}
\quad \subset T^\ast V
\quad (1\leq i\leq p)
\end{equation}
and  
\begin{equation}
\CCirr(\SM,V)\coloneq
\sum_{i=1}^p N(f_i)\cdot[\Lambda(\SM,V)_i],
\end{equation}
where $N\colon\SP_{\varpi^{-1}(D\cap U)}^\prime\longrightarrow
(\ZZ_{\geq0})_{\varpi^{-1}(D\cap U)}$ is the multiplicity for 
which the enhanced ind-sheaf $\pi^{-1}\CC_U\otimes Sol_X^\rmE(\SM)\in \BEC(\rmI\CC_X)$
has a quasi-normal form along $D\cap U\subset U$.
Then $\CCirr(\SM,V)$ is a (not necessarily homogeneous) Lagrangian cycle in 
$T^\ast V\subset T^\ast X$.
Denote the generic rank of the meromorphic connection $\SM$ by $r\geq0$ and let
$h_1\ldots,h_r\in\SP_{\varpi^{-1}(D\cap U)}^\prime$ be the exponential factors of $\SM$
holomorphic on $V$ and counted with multiplicities.
Then obviously we can define $\CCirr(\SM,V)$ also by
\begin{equation}
\CCirr(\SM,V)\coloneq\sum_{i=1}^r
\Bigl[\Set*{(x,dh_i(x))}{x\in V}\Bigr].
\end{equation}
Moreover, shrinking $V$ if necessary, by the condition (ii) of 
the good sets of irregular values in Section \ref{sec:qNor} we may assume also that 
for any $i\neq j$ we have 
\begin{equation}
\Lambda(\SM,V)_i\cap\Lambda(\SM,V)_j=\emptyset.
\end{equation}
Shrink $U$ and cover $U\setminus D$ by such sectors $V\subset U\setminus D$.
Then by the proof of Proposition \ref{hda-dk}, we see that the Lagrangian cycles
$\CCirr(\SM,V)$ for various $V$ patch together to form the one $\CCirr(\SM,V)$ in 
$T^\ast(U\setminus D)\subset T^\ast U\subset T^\ast X$.
We call it the irregular characteristic cycle of $\SM$ (over $U\subset X$).
Now let $g\colon U\longrightarrow\CC$ be a defining holomorphic 
function of the normal crossing divisor $D\cap U\subset U$.
Then we obtain the following reformulation of Corollary \ref{thm-CCquasi} in terms of 
$\CCirr(\SM)$.

\begin{theorem}\label{thm-5-2-1}
In the open subset $T^\ast U\subset T^\ast X$ we have 
\begin{equation}
\CCyc(\SM)=\lim_{t\rightarrow+0}
t\Bigl\{\CCirr(\SM)+d\log g\Bigr\},
\end{equation}
where the limit in the right hand side stands for that of Lagrangian cycles
(for the definition, see \cite[Section 2.2]{FKT26}).
\end{theorem}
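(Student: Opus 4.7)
The strategy is to match both sides against the explicit formula of Corollary \ref{thm-CCquasi} by a direct computation in local coordinates. The assertion being local on $U$, I would work in a neighborhood of a point $x\in D$ with coordinates $(u,v)=(u_1,\ldots,u_l,v_1,\ldots,v_{n-l})$ such that $D=\{u_1\cdots u_l=0\}$. Two local defining functions of $D\cap U$ differ by a nonvanishing holomorphic factor, so any change of $g$ alters $d\log g$ by a holomorphic $1$-form whose contribution vanishes in the $t\to+0$ limit; hence I may take $g=u_1 u_2\cdots u_l$ and write $d\log g=\sum_{j=1}^{l}du_j/u_j$. By passing to a suitable ramification $\rho\colon X'\to U$ and using that $\SM\vbar_U$ is a direct summand of $\bfD\rho_{\ast}\bfD\rho^{\ast}(\SM\vbar_U)$, I would further reduce to the case where $\SM$ has a normal form along $D$, so that on each open sector $V\subset U\setminus D$,
\begin{equation*}
\CCirr(\SM)\vbar_V=\sum_{i=1}^{k} m_i\,[\Lambda_i],\qquad \Lambda_i=\{(x,d\varphi_i(x))\mid x\in V\},
\end{equation*}
where $\{[\varphi_1],\ldots,[\varphi_k]\}$ is the good set of irregular values with multiplicities $m_1,\ldots,m_k$.

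The central step is the computation, for each factor $\varphi_i$ with leading term $c_i(v)\big/(u_1^{k^{(i)}_1}\cdots u_l^{k^{(i)}_l})$ (where $k^{(i)}_j\geq 0$ and $c_i(v)$ is nowhere vanishing by condition (i) of the good-set axioms), of the limit as $t\to+0$ of the family of Lagrangian cycles
\begin{equation*}
t\bigl([\Lambda_i]+d\log g\bigr)=\Bigl\{\bigl(x,\,t(d\varphi_i+d\log g)(x)\bigr)\,\Big|\,x\in V\Bigr\},\qquad t\in I,
\end{equation*}
via the subanalytic boundary formalism of Section \ref{subsec.5.2}, in particular Proposition \ref{prop-boundary}. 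The coefficient of $du_j$ equals $t(1-k^{(i)}_j\varphi_i)/u_j$, so under the scaling $u_j\sim t^{1/(k^{(i)}_j+1)}$ for $j$ in some subset $J\subset\{1,\ldots,l\}$ (with $u_j$ bounded away from zero for $j\notin J$) this coefficient tends to a finite nonzero value, producing a contribution supported on $T^\ast_{D_J}X$, where $D_J=\bigcap_{j\in J}D_j$ (with $D_\emptyset=X$). An explicit count of the sheets of the limit map $(\alpha_j)_{j\in J}\mapsto(\text{fiber coordinates in }T^\ast_{D_J}X)$ yields a multiplicity of $1+\sum_{j\in J}k^{(i)}_j$ per factor. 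Summing over $i$ with weights $m_i$, and using $r=\sum_i m_i$ and $\irr_{D_j^\circ}(\SM)=\sum_i m_i k^{(i)}_j$, the total limit equals
\begin{equation*}
\sum_{J\subset\{1,\ldots,l\}}\Bigl(r+\sum_{j\in J}\irr_{D_j^\circ}(\SM)\Bigr)\bigl[T^\ast_{D_J}X\bigr],
\end{equation*}
which is precisely $\CCyc(\SM)$ by Corollary \ref{thm-CCquasi}.

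The principal obstacle is making the multiplicity count rigorous within the subanalytic framework. The naive pointwise limit of the sections $(x,t)\mapsto(x,t(d\varphi_i+d\log g)(x))$ does not directly produce a cycle; one must construct the associated subanalytic $(2n{+}1)$-chain on $T^\ast U\times J$, compute its boundary via Proposition \ref{prop-boundary}, and verify that the local data on distinct sectors $V$ patch consistently into a global family of Lagrangian cycles. The good-set condition of Section \ref{sec:qNor} — especially the total-orderability of $\{\ord(\varphi_i-\varphi_j)\}$ — is essential here, as it ensures that distinct exponential factors occupy non-overlapping asymptotic regimes, so that their individual limits combine additively without interference. Once these sheaf-theoretic subtleties are settled, the proof reduces to the clean combinatorial identity displayed above.
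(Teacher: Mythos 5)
Your overall strategy matches the paper's: work in local coordinates, reduce to the normal-form case, compute the multiplicity of each conormal bundle $[T^\ast_{D_J}X]$ in the limit cycle one exponential factor at a time, and match the result with the explicit formula of Corollary~\ref{thm-CCquasi}. However, the computation you designate as "the principal obstacle" is exactly the heart of the proof, and the heuristic you offer for it is incorrect.

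Concretely, the scaling $u_j\sim t^{1/(k^{(i)}_j+1)}$ does \emph{not} make the coefficient of $du_j$ finite and nonzero once $|J|\geq 2$. Writing $e_j = 1/(k_j+1)$ and $\varphi\sim c\prod_{m\in J}u_m^{-k_m}$, the $du_j$-coefficient is asymptotically $-tk_jc\varphi/u_j$, whose $t$-exponent under your scaling is $1-\sum_{m\in J}(1-e_m)-e_j = 1-|J|+\sum_{m\in J}e_m - e_j$; since $0<e_m\leq 1$ and not all $k_m$ vanish, this exponent is strictly positive when $|J|\geq 2$, so the coefficient goes to $0$, not to a generic point of $T^\ast_{D_J}X$. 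Even in the case $l=2$, $k_1=k_2=1$ one finds directly that $u_1\sim t^{1/3}$, not $t^{1/2}$; the correct scaling is $u_j\sim t^{1/(1+\sum_{m\in J}k_m)}$ and depends on the entire $J$-tuple $(k_m)_{m\in J}$, not just $k_j$. What the paper actually does is replace the complex Lagrangian $[\Theta(\SM)_i]+d\log g$ by the graph of a monomial map $\Psi_A\colon T\times\CC^{n-l}\to T\times\CC^{n-l}$ associated to the integer matrix
\begin{equation*}
A = -\begin{pmatrix}
k_1+1 & k_1 & \cdots & k_1 \\
k_2 & k_2+1 & & \vdots \\
\vdots & & \ddots & k_{l-1} \\
k_l & k_l & \cdots & k_l+1
\end{pmatrix},
\end{equation*}
then uses the Smith normal form $B_1AB_2 = \mathrm{diag}(1,\ldots,1,1+\sum k_j)$ to see that $\Phi_A$ is an unramified covering of degree $1+\sum k_j$, and finally uses $AC=(1+\sum k_j)E_l$ for a suitable matrix $C$ to check that every solution of $t\Phi_A(u)=\alpha$ tends to $0$ as $t\to+0$. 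This toric-geometric argument is what makes the sheet count rigorous and is absent from your proposal. Your reduction to the normal-form case is also looser than the paper's: rather than invoking the direct-summand relation $\SM|_U \subset \bfD\rho_\ast\bfD\rho^\ast(\SM|_U)$, the paper pulls back the cycle $\CCirr(\SM)+d\log g$ along $\rho\times\mathrm{id}$, computes the multiplicity of $[T^\ast_{Y'}X']$ upstairs, and divides by the covering degree $d_1\cdots d_l$. Finally, your combinatorial identity at the end (using $r=\sum_i m_i$ and $\irr_{D_j^\circ}(\SM)=\sum_i m_i k^{(i)}_j$) is correct, so once the multiplicity count is repaired the argument would close.
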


\begin{proof}
Let $(u,v)=(u_1,\ldots,u_l,v_1,\ldots,v_{n-l})$ be the coordinate of $U$ 
such that $x=(0,0)\in D\cap U=\{u_1\cdots u_l=0\}$ 
that we used to define the representative subsheaf 
$\SP_{\varpi^{-1}(D\cap U)}^\prime\subset \SP_{\varpi^{-1}(D\cap U)}$
and $h_1,\ldots,h_r\in \SP_{\varpi^{-1}(D\cap U)}^\prime$ the 
exponential factors of $\SM$ (which are multi-valued holomorphic 
functions on $U\setminus D$) counted with multiplicities.
For $1\leq j\leq r$ let $(k_{i1},\ldots,k_{il})\in(\QQ_{\geq0})^l$ be the pole 
order of $h_i$ along the normal crossing divisor $D\cap U=\{u_1\cdots u_l=0\}\subset U$.
First we consider the case where for any $1\leq i\leq r$ 
the function $h_i$ is meromorphic and hence we have
$(k_{i1},\ldots,k_{il})\in (\ZZ_{\geq0})^l$. 
Then by Corollary \ref{thm-CCquasi} for the complex submanifold 
$Y\coloneq\{u_1=\cdots=u_l=0\}\subset U$ of $U$ it suffices to check that 
the multiplicity of the conormal bundle $[T_Y^\ast U]$ in the limit cycle
\begin{equation}
\lim_{t\rightarrow+0} t\Bigl\{\CCirr(\SM)+d \log g\Bigr\}
\end{equation}  
is equal to 
\begin{equation}
\sum_{i=1}^r\biggl\{
\Bigl(\sum_{j=1}^l k_{ij}\Bigr)+1
\biggr\}
=\biggl(\sum_{j=1}^l\irr_{D_j^\circ}(\SM)\biggr)+r
\end{equation}
(for the definition of $D_j^\circ\subset D_j=\{u_j=0\}$ see Corollary \ref{thm-CCquasi}).
Indeed, we can calculate the multiplicities of the other conormal bundles similarly.
For $1\leq i\leq r$ we set 
\begin{equation}
\Theta(\SM)_i\coloneq\Set*{(x,dh_i(x))}{x\in U\setminus D} \quad \subset T^\ast U
\end{equation}
so that we have $\CCirr(\SM)=\displaystyle\sum_{i=1}^r[\Theta(\SM)_i]$.
Then it suffices to show that for any $1\leq i\leq r$ the multiplicity of the conormal
bundle $[T_Y^\ast U]$ in the limit cycle
\begin{equation}
\lim_{t\rightarrow+0} t\Bigl\{[\Theta(\SM)_i]+d \log g\Bigr\}
\end{equation}
is equal to $\displaystyle\sum_{j=1}^l k_{ij}+1$.
For this purpose, let us recall some elementary methods in toric geometry.
First, for an integer vector 
\begin{equation}
\vec{a}=
\begin{pmatrix}
a_1 \\
\vdots \\
a_l 
\end{pmatrix}
\quad \in\ZZ^l
\end{equation}
and a point $u=(u_1,\ldots,u_l)\in T\coloneq (\CC^\ast)^l$ we set 
$u^{\vec{a}}\coloneq u_1^{a_1}\cdots u_l^{a_l}\in\CC^\ast$.
Next, for an integer square matrix 
\begin{equation}
A=(\vec{a_1} \cdots \vec{a_l}) \quad \in M_l(\ZZ)
\end{equation}
with column vectors $\vec{a_1},\ldots,\vec{a_l}\in\ZZ^l$ we define a morphism
$\Phi_A\colon T\longrightarrow T$ by 
\begin{equation}
\Phi_A(u)\coloneq u^A\coloneq (u^{\vec{a_1}},u^{\vec{a_2}},\ldots,u^{\vec{a_l}})
\quad \in T=(\CC^\ast)^l.
\end{equation}
Then we can easily see that for any $A,B\in M_l(\ZZ)$
we have 
\begin{equation}
u^{AB}=(u^A)^B \quad (u\in T=(\CC^\ast)^l)
\end{equation}
and hence $\Phi_{AB}=\Phi_B\circ\Phi_A$.
This implies that for $A\in M_l(\ZZ)$ the morphism $\Phi_A\colon T\longrightarrow T$ is
an automorphism if and only if $A$ is unimodular i.e. $\det A=\pm1$.
For $A\in M_l(\ZZ)$ we define also a morphism $\Psi_A\colon 
T\times\CC^{n-l}\longrightarrow T\times\CC^{n-l}$ by $\Psi_A\coloneq \Phi_A\times \id_{\CC^{n-l}}$.
Now, for the pole order $(k_1,\ldots,k_l)\coloneq(k_{i1},\ldots,k_{il})\in(\ZZ_{\geq0})^l$ 
of the meromorphic function $h_i$ along $D\cap U\subset U$ we set
\begin{equation}
A\coloneq -
\begin{pNiceMatrix}
k_1+1 & k_1 & \Cdots & \Cdots & k_1 \\
k_2 & k_2+1 & & & \Vdots \\
\Vdots & k_3 & \Ddots & & \Vdots \\
& \Vdots & & \Ddots & k_{l-1} \\
k_l & k_l & \Hdotsfor{2} & k_l+1 \\
\end{pNiceMatrix}
\quad \in M_l(\ZZ).
\end{equation}
Then by a simple calculation, we can replace the complex Lagrangian submanifold 
$[\Theta(\SM)_i]+d\log g$ by the graph of the morphism
$\Psi_A\colon T\times\CC^{n-l}\longrightarrow T\times\CC^{n-l}$ associated to $A$.
Hence for a give point $\alpha\in T=(\CC^\ast)^l$ it suffices to calculate the number
of the solutions $u\in T=(\CC^\ast)^l$ of the equation $t\Phi_A(u)=\alpha$ that tend to
the origin $0\in\CC^l$ as $t\rightarrow+0$.
Since there exist unimodular matrices $B_1, B_2 \in M_l(\ZZ)$ such that 
\begin{equation}
B_1AB_2=
\begin{pNiceMatrix}
1 & & & & \\
& \Ddots & & & \text{\huge{0}} \\
& & \Ddots & & \\
& \text{\huge{0}} & & 1 & \\ 
& & & & k_1+\cdots+k_l+1 \\   
\end{pNiceMatrix}
,
\end{equation}
we see that $\Phi_A\colon T\longrightarrow T$ is an unramified covering of degree 
$k_1+\cdots+k_l+1=(\displaystyle\sum_{j=1}^l k_{ij})+1$.
Moreover, for the matrix
\begin{equation}
C\coloneq
\begin{pNiceMatrix}
\displaystyle -\sum_{j\neq1} k_j-1 & k_1 & \Cdots & k_1 \\
k_2 & \displaystyle  -\sum_{j\neq2} k_j-1 &  & k_2 \\
\Vdots & \Vdots & \Ddots & \Vdots \\
k_l & k_l & \Cdots & \displaystyle  -\sum_{j\neq l}k_j-1 \\   
\end{pNiceMatrix}
\quad \in M_l(\ZZ) 
\end{equation}
we have
\begin{equation}
AC=CA=(k_1+\cdots+k_l+1)\cdot E_l,
\end{equation}
where $E_l\in M_l(\ZZ)$ stands for the unit matrix of size $l$.
So, for a solution $u\in T=(\CC^\ast)^l$ of the equation 
\begin{equation}
t\Phi_A(u)=\alpha \quad \iff \quad
\Phi_A(u)=\frac{\alpha}{t}
\end{equation}
there exists a point $\beta=(\beta_1,\ldots,\beta_l)\in T=(\CC^\ast)^l$
such that 
\begin{align}
(u_1^{k_1+\cdots+k_l+1},\ldots \ldots ,u_l^{k_1+\cdots+k_l+1})&=\Phi_{AC}(u) \\
&=\Phi_C\biggl(\frac{\alpha}{t}\biggr) \\
&=(t\beta_1,\ldots,t\beta_l).
\end{align}
This implies that $u\in T=(\CC^\ast)^l$ tends to the origin $0\in\CC^l$ 
as $t\rightarrow+0$ and hence we obtain the assertion. 
Next we consider the general case. 
Let $\rho\colon X^\prime\longrightarrow U$ 
$\bigl((w,v)\longmapsto(u,v)=(w_1^{d_1},\ldots,w_l^{d_l},v)\bigr)$
be a ramification of $U$ along $D\cap U\subset U$ such that 
$h_i\circ\rho$ is a meromorphic function on $X^\prime$ along the normal crossing 
divisor $D^\prime\coloneq\rho^{-1}(D\cap U)\subset X^\prime$
and hence $(d_i k_{i1},\ldots,d_i k_{il})\in(\ZZ_{\geq0})^l$ for any $1\leq i\leq r$. 
By the morphism $\rho \times {\rm id}_{\CC^n}: 
X^\prime \times \CC^n \longrightarrow U \times \CC^n$ we 
take the pull-back $( \rho \times {\rm id}_{\CC^n})^* ( \CCirr(\SM)+d \log g )$ 
of the cycle $\CCirr(\SM)+d \log g$. 
Then we can similarly show that for the complex submanifold 
$Y^\prime\coloneq\{w_1=\cdots=w_l=0\}\subset X^\prime$ of $X^\prime$ the multiplicity
of the conormal bundle $[T_{Y^\prime}^\ast X^\prime]$ in the limit cycle 
\begin{equation}
\lim_{t\rightarrow+0}t \Bigl\{
( \rho \times {\rm id}_{\CC^n})^* ( \CCirr(\SM)+d \log g )
 \Bigr\}
\end{equation}
is equal to 
\begin{equation}
\sum_{i=1}^r d_1\cdots d_l\cdot \biggl\{\Bigl(\sum_{j=1}^l k_{ij}\Bigr)+1\biggr\}.
\end{equation}
As the degree of the covering 
$X^\prime\setminus D^\prime\longrightarrow U\setminus(U\cap D)$
induced by $\rho$ is equal to $d_1\cdots d_l$, this implies that 
the multiplicity of $[T_Y^\ast U]$ in the limit cycle 
\begin{equation}
\lim_{t\rightarrow+0}t\Bigl\{\CCirr(\SM)+d \log g\Bigr\}
\end{equation}   
is equal to 
\begin{align}
\frac{1}{d_1\cdots d_l}\cdot\sum_{i=1}^r d_1\cdots d_l\cdot
\biggl\{\Bigl(\sum_{j=1}^l k_{ij}\Bigr)+1\biggr\}
=\biggl(\sum_{j=1}^l \irr_{D_j^\circ}(\SM)\biggr)+r
\end{align}
as we expect from Corollary \ref{thm-CCquasi}.
This completes the proof.
\end{proof}

\begin{definition}\label{def-expmero}
Let $X$ be a complex manifold.
Then we say that a holonomic $\SD_X$-module $\SM$ is an exponentially twisted 
meromorphic connection if there exists a meromorphic function $f\in\SO_X(\ast Y_1)$
(resp. a regular meromorphic connection $\SN$) on $X$
along a closed hypersurface $Y_1\subset X$ (resp. $Y_2\subset X$) such that we have an 
isomorphism
\begin{equation}
\SM\simeq \SE_{X\setminus Y_1\vbar X}^f \Dotimes \SN.
\end{equation}
In this case, for the divisor $D\coloneq Y_1\cup Y_2$ we say also that $\SM$ 
is an exponentially twisted meromorphic connection along $D\subset X$.
\end{definition}

\begin{remark}
In the situation of Definition \ref{def-expmero}, for the open subset 
$U\coloneq X\setminus D\subset X$ there exists an isomorphism
\begin{equation}
\SM\simeq \SE_{U\vbar X}^{f\vbar_U} \Dotimes \Bigl(\SN(\ast D)\Bigr).
\end{equation}
\end{remark}

Let $\SM, f, \SN$ etc. be as in Definition \ref{def-expmero} and $r\geq0$ the generic
rank of the regular meromorphic connection $\SN$.
Then for the open subset $U\coloneq X\setminus D\subset X$ we define 
a (not necessarily homogeneous) Lagrangian cycle 
$\CCirr(\SM)$ in $T^\ast U\subset T^\ast X$ by
\begin{equation}
\CCirr(\SM)\coloneq \CCyc ( \SN |_U)+d(f|_U)= 
r\cdot \Bigl[\Set*{(x,df(x))}{x\in U}\Bigr].
\end{equation}
We call it the irregular characteristic cycle of $(\SM,f,\SN)$.
Note that if $X$ is not compact it depends not only on $\SM$ but also on
the decomposition $\SM\simeq \SE_{X\setminus Y_1\vbar X}^f\Dotimes\SN$ of $\SM$.

\begin{theorem}\label{thm-5-2-21}
Let $\SM, f, \SN$ etc. be as in Definition \ref{def-expmero} and 
$g\colon X\longrightarrow\CC$ a (local) defining holomorphic function of 
the divisor $D\subset X$. Then we have
\begin{equation}\label{eq-CCirr}
\CCyc(\SM)=\lim_{t\rightarrow+0}t\Bigl\{\CCirr(\SM)+d \log g\Bigr\}.
\end{equation}
\end{theorem}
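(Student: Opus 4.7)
The plan is to reduce Theorem \ref{thm-5-2-21} to the quasi-normal form case of Theorem \ref{thm-5-2-1} by blowing up, and then to push the resulting equality of Lagrangian cycles forward by the resolution map, exploiting the functoriality of Schmid--Vilonen's limit operation recorded in Proposition \ref{prop-Lag_push}.

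The statement being local around a point $x\in D=Y_1\cup Y_2$, I would combine Hironaka's resolution of singularities with the Kedlaya--Mochizuki theorem (Theorem \ref{thm-MK}) to obtain a projective birational morphism $\nu\colon X'\to X$ such that $D'\coloneq \nu^{-1}(D)\subset X'$ is a normal crossing divisor, $\nu$ induces an isomorphism $X'\setminus D'\simto X\setminus D$, $f\circ\nu$ is a meromorphic function on $X'$ with poles contained in $D'$, and $\bfD\nu^\ast\SN$ is a regular meromorphic connection along $D'$. With these choices,
\begin{equation}
\bfD\nu^\ast\SM \simeq \SE^{f\circ\nu}_{X'\setminus D'\vbar X'}\Dotimes \bfD\nu^\ast\SN
\end{equation}
is an exponentially twisted meromorphic connection along $D'$, and in particular admits a quasi-normal form along $D'$. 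Applying Theorem \ref{thm-5-2-1} to it with defining function $g'\coloneq g\circ\nu$ gives
\begin{equation}\label{eq-upstairs}
\CCyc(\bfD\nu^\ast\SM) = \lim_{t\to+0} t\bigl\{\CCirr(\bfD\nu^\ast\SM) + d\log g'\bigr\}.
\end{equation}

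I would then push \eqref{eq-upstairs} forward via the direct image $\mu_\ast(\nu)$ for Lagrangian cycles of \cite[Proposition 9.3.2]{KS90}, applied at the level of $t$-families. For the left-hand side, the meromorphic-connection identity $\SM \simeq \bfD\nu_\ast \bfD\nu^\ast\SM$ (which holds since $\SM\simto\SM(\ast D)$ and $\nu$ is an isomorphism outside $D$), combined with the compatibility of characteristic cycles with proper direct images of holonomic $\SD$-modules---deduced from Kashiwara's index theorem through the commutative diagram of Lemma \ref{lem:comut-1}---yields $\mu_\ast(\nu)\CCyc(\bfD\nu^\ast\SM)=\CCyc(\SM)$. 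For the right-hand side, Proposition \ref{prop-Lag_push} allows me to exchange the pushforward with the limit, reducing the problem to identifying, as a family over $t\in I$, the $\mu_\ast(\nu,I)$-image of $\CCirr(\bfD\nu^\ast\SM)+d\log g'$ with $\CCirr(\SM)+d\log g$. Over the open subset $T^\ast(X\setminus D)$ this identification is immediate, since $\nu$ is there an isomorphism and one has
\begin{equation}
\CCirr(\bfD\nu^\ast\SM)\vbar_{T^\ast(X'\setminus D')} = \nu^\ast\bigl(\CCirr(\SM)\vbar_{T^\ast(X\setminus D)}\bigr),\qquad d\log g'=\nu^\ast d\log g.
\end{equation}

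The main obstacle will be controlling the pushforward along the exceptional locus of $\nu$, where the graphs of the differentials $d(f\circ\nu+t\log g')$ escape to infinity in the fibres of $T^\ast X'$ over $D'$ as $t\to+0$ and must be matched stratum by stratum with the limit behaviour of $d(f+t\log g)$ on $X$. I would handle this by a local calculation in a neighborhood where $\nu$ is presented as a sequence of blow-ups followed by a ramification, reducing to the toric computation already carried out in the proof of Theorem \ref{thm-5-2-1}: on each conormal bundle $[T^\ast_Z X]$ for a stratum $Z\subset D$, one verifies that the covering degree of $\nu$ above $Z$ exactly cancels the corresponding ramification factor upstairs, so that the multiplicity of $[T^\ast_Z X]$ in the pushed-forward limit coincides with its multiplicity in $\lim_{t\to+0}t\{\CCirr(\SM)+d\log g\}$. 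Combined with Proposition \ref{prop-Lag_push} and the left-hand identification, this multiplicity check completes the proof.
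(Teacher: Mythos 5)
Your proposal takes the same route as the paper: resolve indeterminacies by a projective $\nu\colon X'\to X$, apply Theorem \ref{thm-5-2-1} upstairs, deduce $\CCyc(\SM)=\mu_\ast(\nu)\CCyc(\bfD\nu^\ast\SM)$ from $Sol_X(\SM)\simeq\rmR\nu_\ast Sol_{X'}(\bfD\nu^\ast\SM)$ together with the proper pushforward theorem for characteristic cycles, and then commute $\mu_\ast(\nu)$ with $\lim_{t\to+0}$ via Proposition \ref{prop-Lag_push}.

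Your final paragraph, however, anticipates a difficulty that is not there. Once the upstairs identity and Proposition \ref{prop-Lag_push} are available, no stratum-by-stratum multiplicity matching along the exceptional locus remains to be done: for $t\in I=(0,b)$ the family $t\bigl\{\CCirr(\bfD\nu^\ast\SM)+d\log(g\circ\nu)\bigr\}$ is supported in $T^\ast(X'\setminus D')$, over which $\nu$ restricts to an isomorphism onto $X\setminus D$, so its image under $\mu_\ast(\nu,I)$ is tautologically the family $t\bigl\{\CCirr(\SM)+d\log g\bigr\}$. The content of Proposition \ref{prop-Lag_push} is precisely that taking the limit at $t=0$ commutes with this pushforward of families, so the cancellation of covering degrees and ramification factors you propose to verify by a local toric computation is already encoded in that one functoriality statement; the paper accordingly ends the proof immediately after quoting it. (Incidentally, $\mu_\ast(\nu)$ and $\mu_\ast(\nu,I)$ are automatically defined here because $\nu$ is projective, hence the relevant map $\varpi_\nu\colon X'\times_X T^\ast X\to T^\ast X$ is proper, so the properness hypothesis in Proposition \ref{prop-Lag_push} holds without further argument.)
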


\begin{proof}
Let $\nu\colon X^\prime\longrightarrow X$ be a projective morphism of complex
manifolds inducing an isomorphism 
$X^\prime\setminus \nu^{-1}(D)\simto X\setminus D$ such that 
$D^\prime \coloneq \nu^{-1}(D)\subset X^\prime$ is a normal crossing divisor and the
meromorphic function $f\circ\nu$ on $X^\prime$ along $D^\prime\subset X^\prime$ has no 
point indeterminacy.
Then the meromorphic connection $\bfD\nu^\ast\SM$ on $X^\prime$ has a normal form
along the normal crossing divisor $D^\prime \subset X^\prime$ and hence by 
the proof of Theorem \ref{thm-5-2-1} we obtain an equality 
\begin{equation}
\CCyc(\bfD\nu^\ast \SM)=
\lim_{t\rightarrow+0} t\Bigl\{\CCirr(\bfD\nu^\ast\SM)+ d\log(g\circ\nu)\Bigr\}.
\end{equation}
Note that $\bfD\nu^\ast\SM$ is an exponentially twisted meromorphic connection
on $X^\prime$ along $D^\prime\subset X^\prime$ and its irregular characteristic cycle
$\CCirr( \bfD\nu^\ast \SM)$ is naturally identified with 
$\CCirr( \SM)$ via the isomorphism 
$T^\ast(X^\prime\setminus D^\prime)\simeq T^\ast(X\setminus D)$.
Moreover, by the isomorphisms
\begin{equation}
Sol_X(\SM) \simeq Sol_X(\bfD\nu_\ast(\bfD\nu^\ast\SM)) 
\simeq \rmR\nu_\ast Sol_{X^\prime}(\bfD\nu^\ast\SM)
\end{equation}
we see also that the characteristic cycle $\CCyc(\SM)=\CCyc(Sol_X(\SM))$ of $\SM$ is the 
push-forward of the Lagrangian cycle 
$\CCyc(\bfD\nu^\ast\SM)=\CCyc(Sol_{X^\prime}(\bfD\nu^\ast\SM))$ 
by $\nu$ (see Kashiwara-Schapira \cite[Chapter IX]{KS90}).
Then by \cite[Proposition 2.8]{FKT26} we obtain 
the desired equality \eqref{eq-CCirr} as follows:
\begin{align}
\CCyc(\SM)&=\mu_\ast(\nu) \CCyc(\bfD\nu^\ast\SM) \\
&=\mu_\ast(\nu)\biggl[\lim_{t\rightarrow+0}t\Bigl\{
\CCirr(\bfD\nu^\ast\SM)+ d\log(g\circ\nu)
\Bigr\}\biggr] \\
&=\lim_{t\rightarrow+0}t\Bigl\{\CCirr(\SM)+d\log g\Bigr\}.
\end{align}
This completes the proof.
\end{proof}

\begin{example}
\begin{enumerate}[wide,labelwidth=!,labelindent=0pt]
\item [\rm (i)]
Let us consider the situation in Example \ref{ex-CC} (ii).
Let $g(x,y)\coloneq x$ be a defining holomorphic function of 
the divisor $D=\{x=0\}\subset X$. 
Then we can show that the coefficient of $[T_{\{0\}}^\ast X]$ 
in the limit cycle
\begin{equation}
\lim_{t\rightarrow +0} t\Bigl\{\CCirr(\SM)+ d\log g\Bigr\}
\end{equation}
is equal to $1$ as follows. For this purpose, 
for generic $(\alpha,\beta)\in(\CC^\ast)^2\subset\CC^2$ and $0<t\ll1$ 
we solve the equations  
\begin{align}
&t\Bigl\{d\varphi+ d\log g\Bigr\}(x,y) \\
&=t
\begin{Bmatrix}
\begin{pmatrix}
\displaystyle -\frac{y^k}{x^2} \\
\\
\displaystyle \frac{ky^{k-1}}{x}
\end{pmatrix}
+
\begin{pmatrix}
\displaystyle \frac{1}{x} \\
\\
0
\end{pmatrix}
\end{Bmatrix}
=
\begin{pmatrix}
\alpha \\
\\
\beta
\end{pmatrix}
\\
&\iff
\begin{dcases}
t\cdot \frac{x-y^k}{x^2} &=\alpha, \\
t\cdot \frac{ky^{k-1}}{x} &=\beta.
\end{dcases}
\label{eq-53ex1-1}
\end{align}
Then from the second equation 
in \eqref{eq-53ex1-1} 
we deduce $\displaystyle x=\frac{kt}{\beta}y^{k-1}$.
Substituting it into the first equation, we obtain
\begin{equation}
t\cdot\cfrac{\cfrac{kt}{\beta}y^{k-1}-y^k}{\cfrac{k^2t^2}{\beta^2}\cdot y^{2k-2}}
=\alpha  \qquad \iff \qquad 
\biggl(\frac{\beta^2}{k\beta-k^2\alpha y^{k-1}}\biggr)
\cdot y=t. \label{eq-53ex1-2}
\end{equation}
Since for the meromorphic function
\begin{equation}
h(y)\coloneq \frac{\beta^2}{k\beta -k^2\alpha y^{k-1}}
\end{equation}
of $y$ we have $h(0)\neq0$, the equation \eqref{eq-53ex1-2} 
of $y\in\CC$ has only one solution near the origin.
Moreover, the unique solution 
$\displaystyle (x,y)=\Bigl(\frac{kt}{\beta}y^{k-1},y\Bigr)$ of \eqref{eq-53ex1-1}
thus obtained tends to the $0=(0,0)\in X=\CC^2$ as $t\rightarrow+0$.
Now our claim is clear.
Similarly, we can show that 
\begin{align}
&\lim_{t\rightarrow +0} t\Bigl\{\CCirr(\SM)+ d\log g\Bigr\} \\
&= 1\cdot[T_X^\ast X] +2\cdot[T_D^\ast X] +1\cdot[T_{\{0\}}^\ast X]
=\CCyc(\SM).
\end{align}
\item [\rm (ii)]
Let us consider the situation in Example \ref{ex-CC} (iii).
Let $g(x,y)\coloneq x$ be a defining holomorphic function of the divisor $D=\{x=0\}
\subset X$.
Then we can show that the coefficient of $[T_{\{0\}}^\ast X]$ in the limit cycle 
\begin{equation}
\lim_{t\rightarrow +0} t\Bigl\{\CCirr(\SM)+ d\log g\Bigr\}
\end{equation}
is equal to $k>0$ as follows. For this purpose, for generic 
$(\alpha,\beta)\in(\CC^\ast)^2\subset\CC$ and $0<t\ll1$ we solve the equations
\begin{align}
&t\Bigl\{d\varphi+ d\log g\Bigr\}(x,y) \\
&=t
\begin{Bmatrix}
\begin{pmatrix}
\displaystyle \frac{-ky}{x^{k+1}} \\
\\
\displaystyle \frac{1}{x^k}
\end{pmatrix}
+
\begin{pmatrix}
\displaystyle \frac{1}{x} \\
\\
0
\end{pmatrix}
\end{Bmatrix}
=
\begin{pmatrix}
\alpha \\
\\
\beta
\end{pmatrix}
\\
&\iff
\begin{dcases}
t\cdot \frac{x^k-ky}{x^{k+1}} &=\alpha, \\
t\cdot \frac{1}{x^k} &=\beta.
\end{dcases}
\label{eq-53ex2}
\end{align}
Then the second equation of $x\in\CC$ has $k$ solutions near 
the origin $0\in\CC$. Moreover, they tend to the origin $0\in\CC$ as $t\rightarrow+0$.
We denote one of them by $\displaystyle\sqrt[\leftroot{-2}\uproot{10}k]{\frac{t}{\beta}}$ 
for short and substitute it into the first equation of \eqref{eq-53ex2}. Then we obtain 
\begin{equation}
t\cdot\cfrac{\cfrac{t}{\beta}-ky}{\sqrt[\leftroot{-2}\uproot{10}k]
{\cfrac{t}{\beta}}\cdot\cfrac{t}{\beta}}
=\alpha 
\qquad \iff \qquad 
y= \frac{1}{k\beta} 
\biggl(t-\alpha\cdot\sqrt[\leftroot{-2}\uproot{10}k]{\frac{t}{\beta}}\biggr).
\end{equation}
Since the $k$ solutions $(x,y)\in X=\CC^2$ of \eqref{eq-53ex2} thus obtained tend to 
the origin $0=(0,0)\in X=\CC^2$ as $t\rightarrow+0$, we verify our claim.
Similarly, we can show that 
\begin{align}
&\lim_{t\rightarrow +0} t\Bigl\{\CCirr(\SM)+ d\log g\Bigr\} \\
&= 1\cdot[T_X^\ast X] +(k+1)\cdot[T_D^\ast X] +k\cdot[T_{\{0\}}^\ast X]
=\CCyc(\SM).
\end{align}   
\item [\rm (iii)]
Let us consider the situation in Example \ref{ex-CC} (iv).
Let $g(x,y)\coloneq x^2-y^3$ be a defining holomorphic function 
of the divisor $D=\{x^2-y^3=0\}
\subset X$.
Then we can show that the coefficient of $[T_{\{0\}}^\ast X]$ in the limit cycle 
\begin{equation}
\lim_{t\rightarrow +0} t\Bigl\{\CCirr(\SM)+ d\log g\Bigr\}
\end{equation}
is equal to $4$ as follows. For this purpose, for generic 
$(\alpha,\beta)\in(\CC^\ast)^2\subset\CC$ and $0<t\ll1$ we solve the equations
\begin{align}
&t\Bigl\{d\varphi+ d\log g\Bigr\}(x,y) \\
&=t
\begin{Bmatrix}
\dfrac{1}{(x^2-y^3)^2}
\begin{pmatrix}
2x \\
\\
-3y^2
\end{pmatrix}
+ \dfrac{1}{x^2-y^3}
\begin{pmatrix}
2x \\
\\
-3y^2
\end{pmatrix}
\end{Bmatrix}
=
\begin{pmatrix}
-2\alpha \\
\\
3\beta
\end{pmatrix}
\\
&\iff
\begin{dcases}
tx \cdot \frac{1-(x^2-y^3)}{(x^2-y^3)^2} &=\alpha, \\
ty^2 \cdot \frac{1-(x^2-y^3)}{(x^2-y^3)^2} &=\beta.
\end{dcases}
\label{eq-53ex3-1}
\end{align}
Taking the ratio of the two equations above, we find
\begin{equation}
\frac{x}{y^2}=\frac{\alpha}{\beta} \quad
\iff \quad x=\frac{\alpha}{\beta} y^2.
\end{equation}
Substituting it into the second equation of \eqref{eq-53ex3-1}, we obtain
\begin{equation}\label{eq-53ex3-2}
\Biggl\{
\frac{(\alpha^2y-\beta^2)^2}{\beta^3-(\alpha^2\beta y^4-\beta^3y^3)}
\Biggr\}
\cdot y^4 =t.
\end{equation}
Since for the meromorphic function
\begin{equation}
h(y)\coloneq
\frac{(\alpha^2y-\beta^2)^2}{\beta^3-(\alpha^2\beta y^4-\beta^3y^3)}
\end{equation}
of $y$ we have $h(0)\neq0$, the equation \eqref{eq-53ex3-2} of $y\in\CC$
has exactly $4$ solutions near the origin $0\in\CC$.
Moreover the $4$ solutions 
$(x,y)=\Bigl(\dfrac{\alpha}{\beta}y^2,y\Bigr)$ of 
\eqref{eq-53ex3-1} thus obtained tend to the origin
$0=(0,0)\in X=\CC^2$ as $t \longrightarrow +0$. 
Now our claim is clear.
Similarly, we can show that 
\begin{align}
&\lim_{t\rightarrow +0} t\Bigl\{\CCirr(\SM)+ d\log g\Bigr\} \\
&= 1\cdot[T_X^\ast X] +2\cdot[\var{T_{D_\reg}^\ast X}] +4\cdot[T_{\{0\}}^\ast X]
=\CCyc(\SM)
\end{align}
in this case.   
\end{enumerate}
\end{example}

We can generalize Theorem \ref{thm-5-2-21} as follows.
Let $f\in\SO_X(\ast Y_1)$ be a meromorphic function on $X$ along a closed
hypersurface $Y_1\subset X$ and $\SN$ a regular holonomic $\SD_X$-modules whose 
support $Z\coloneq\supp\SN\subset X$ is irreducible such that there exists a closed
hypersurface $Y_2\subset X$ satisfying the conditions:
\begin{enumerate}[noitemsep]
\item[\rm (i)]  $Z\setminus Y_2$ is smooth and connected,
\item[\rm (ii)] On a neighborhood of the complex manifold $Z\setminus Y_2$ in $X$
the regular holonomic $\SD_X$-modules $\SN$ is a direct image of an integrable
connection $\SN_\red$ on $Z\setminus Y_2$, 
\item[\rm (iii)] $\SN \simto \SN (* Y_2)$. 
\end{enumerate}
Then we set
\begin{equation}
\SM\coloneq \SE_{X\setminus Y_1\vbar X}^f\Dotimes \SN
\quad \in\Modhol(\SD_X).
\end{equation}
Let $r\geq0$ be the rank of the integrable connection $\SN_\red$ and set 
$D\coloneq Y_1\cup Y_2\subset X$.
Then for the open subset $U\coloneq X\setminus D$ we define a 
(not necessarily homogeneous) Lagrangian cycle $\CCirr(\SM)$ in 
$T^\ast U\subset T^\ast X$ by 
\begin{equation}
\CCirr(\SM)\coloneq 
\CCyc( \SN |_U)+d(f|_U)= 
r\cdot\Bigl\{[T_{Z\cap U}^\ast U]+d(f|_U) \Bigr\}.
\end{equation}
Also for such an irregular holonomic $\SD_X$-module $\SM$ we can prove the following
result by using a resolution of singularities of $Z=\supp\SN\subset X$ 
as in the proof of Theorem \ref{thm-5-2-21}. 

\begin{theorem}\label{thm-5-2-3}
In the situation as above, let $g\colon X\longrightarrow\CC$
be a (local) defining holomorphic function of the divisor $D\subset X$.
Then we have
\begin{equation}
\CCyc(\SM)=\lim_{t\rightarrow+0}t\Bigl\{\CCirr(\SM)+d\log g\Bigr\}.
\end{equation}
\end{theorem}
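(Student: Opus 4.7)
The plan is to reduce Theorem~\ref{thm-5-2-3} to Theorem~\ref{thm-5-2-21} by passing to a resolution of singularities of the support $Z = \supp \SN$ simultaneously with a desingularization of the pair $(D, f)$. Since the statement is local around any fixed point of $X$, I would shrink $X$ to a neighborhood of a point and invoke Hironaka together with the Kedlaya--Mochizuki structure theorem (Theorem~\ref{thm-MK}) applied on the resolved support, to obtain a projective morphism $\nu \colon X' \to X$ of complex manifolds enjoying the following properties: (a) $\nu$ induces an isomorphism $X' \setminus \nu^{-1}(D \cup Z^{\mathrm{sing}}) \simto X \setminus (D \cup Z^{\mathrm{sing}})$; (b) the strict transform $Z' \subset X'$ of $Z$ is smooth; (c) the total transform $D' := \nu^{-1}(D)$ together with the exceptional locus is a normal crossing divisor in $X'$, and $E' := D' \cap Z'$ is a normal crossing divisor in $Z'$; (d) the meromorphic function $f \circ \nu$ has no points of indeterminacy along $D'$; (e) the regular holonomic pullback, viewed as a $\SD_{Z'}$-module supported on $Z'$, is a regular meromorphic connection $\SN'$ along $E' \subset Z'$ (well-defined because $Z'$ is smooth and $\SN$ was already a direct image of an integrable connection off $Y_2$).

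Writing $i' \colon Z' \hookrightarrow X'$ for the closed embedding, the pulled-back module $\bfD \nu^\ast \SM$ becomes the direct image under $\bfD i'_\ast$ of the exponentially twisted meromorphic connection
\[
\SM' := \SE_{Z' \setminus E' \vbar Z'}^{(f \circ \nu)|_{Z'}} \Dotimes \SN'
\]
on the smooth manifold $Z'$, which is exactly of the type treated in Theorem~\ref{thm-5-2-21}. Applying that theorem on $Z'$ with the defining function $(g \circ \nu)|_{Z'}$ of $E'$ yields the equality
\[
\CCyc(\SM') = \lim_{t \to +0} t\bigl\{ \CCirr(\SM') + d \log ((g \circ \nu)|_{Z'}) \bigr\}
\]
of Lagrangian cycles in $T^\ast Z'$. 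Pushing forward by the closed embedding $i'$, using that $\CCyc$ and $\CCirr$ commute with closed embeddings of smooth submanifolds and that $\mu_\ast(i')$ commutes with limits of families of Lagrangian cycles by Proposition~\ref{prop-Lag_push}, this transports to
\[
\CCyc(\bfD \nu^\ast \SM) = \lim_{t \to +0} t\bigl\{ \CCirr(\bfD \nu^\ast \SM) + d \log (g \circ \nu) \bigr\}
\]
in $T^\ast X'$. Then, as in the proof of Theorem~\ref{thm-5-2-21}, the isomorphism $\SM \simeq \bfD \nu_\ast \bfD \nu^\ast \SM$ together with $Sol_X \circ \bfD \nu_\ast \simeq \rmR \nu_\ast \circ Sol_{X'}$ identifies $\CCyc(\SM)$ with $\mu_\ast(\nu) \CCyc(\bfD \nu^\ast \SM)$, and the tautological identification $X' \setminus D' \simto X \setminus D$ identifies $\CCirr(\SM)$ with the pushforward of $\CCirr(\bfD \nu^\ast \SM)$. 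A final application of Proposition~\ref{prop-Lag_push} to $\nu$ converts the equality on $X'$ into the desired formula on $X$.

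The main obstacle is producing the simultaneous resolution with properties (a)--(e): we need a single birational model $X'$ in which $Z$ becomes smooth, $D \cup Z^{\mathrm{sing}}$ becomes normal crossing (inside both $X'$ and $Z'$), $f \circ \nu$ is everywhere meromorphic, and $\SN'$ admits a sufficiently controlled structure along $E'$. This requires iterating embedded resolution of singularities for the pair $(Z, D \cup \mathrm{ind}(f))$ with Kedlaya--Mochizuki applied on $Z'$ rather than on $X'$; the latter is legitimate because after step (b) one can regard $\SN'$ as a regular meromorphic connection on the complex manifold $Z'$. A secondary point is verifying that $\CCirr$ pulls back and pushes forward correctly through $i' \circ \nu$: off the exceptional locus this is immediate since the map is an isomorphism there, but one must check that the Lagrangian cycle $\CCirr(\bfD \nu^\ast \SM)$ is supported in $T^\ast Z' \subset T^\ast X'$ (via $i'$), so that the two pushforwards $\mu_\ast(i')$ and $\mu_\ast(\nu)$ compose to give precisely $\CCirr(\SM)$.
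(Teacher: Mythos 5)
Your proof follows essentially the same strategy the paper outlines: resolve $Z=\supp\SN$ (together with $D$ and the indeterminacy locus of $f$) by a projective $\nu$, pass via Kashiwara's equivalence to the smooth strict transform $Z'$, apply Theorem~\ref{thm-5-2-21} to the resulting exponentially twisted meromorphic connection on $Z'$, and push the equality of limit cycles forward through $\mu_\ast(i')$ and $\mu_\ast(\nu)$ using Proposition~\ref{prop-Lag_push}. This is precisely what the authors mean by ``using a resolution of singularities of $Z=\supp\SN\subset X$ as in the proof of Theorem~\ref{thm-5-2-21}'' --- the same Kashiwara-equivalence reduction to the smooth proper transform is spelled out again in the paper's proof of Theorem~\ref{thm-twistD}.
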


\begin{definition}\label{def-expD} 
(cf. \cite{Tak22}) Let $X$ be a complex manifold.
Then we say that a holonomic $\SD_X$-module $\SM$ is an exponentially twisted holonomic
$\SD_X$-module if there exist a meromorphic function $f\in\SO_X(\ast Y)$ along a
closed hypersurface $Y\subset X$ and a regular holonomic $\SD_X$-module $\SN$ such 
that we have an isomorphism
\begin{equation}
\SM\simeq \SE_{X\setminus Y\vbar X}^f\Dotimes\SN.
\end{equation}
\end{definition}

For the exponentially twisted holonomic $\SD_X$-module $\SM$ in Definition 
\ref{def-expD} we define a (not necessarily homogeneous) Lagrangian cycle $\CCirr(\SM)$
in $T^\ast (X \setminus Y) \subset T^\ast X$ by
\begin{equation}
\CCirr(\SM)\coloneq \CCyc(\SN\vbar_{X \setminus Y}) +df.
\end{equation} 
We call it the irregular characteristic cycle of $\SM$.
Then we can prove the following result.

\begin{theorem}\label{thm-twistD}
Let $\SM, f, \SN$ etc. be as in Definition \ref{def-expD} and 
$g\colon X\longrightarrow\CC$ a (local) defining holomorphic 
function of the divisor $Y\subset X$.
Then we have
\begin{equation}
\CCyc(\SM)=\lim_{t\rightarrow+0}t\Bigl\{\CCirr(\SM)+d\log g\Bigr\}.
\end{equation}
\end{theorem}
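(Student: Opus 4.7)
The plan is to reduce Theorem \ref{thm-twistD} to Theorem \ref{thm-5-2-3} by a Grothendieck-group decomposition of $\SN$ combined with induction on $\dim\supp\SN$. Since $\SE^f_{X\setminus Y|X}\Dotimes\SN$ sees only $\SN(\ast Y)$ and the irregular characteristic cycle $\CCirr(\SM)$ lives in $T^\ast(X\setminus Y)$, I may replace $\SN$ by $\SN(\ast Y)$ and work in the abelian category $\SC_Y$ of regular holonomic $\SD_X$-modules $\SN$ satisfying $\SN\simto\SN(\ast Y)$.

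First I would observe that both sides of the claimed identity factor through the Grothendieck group $K_Y$ of $\SC_Y$. The functor $\SE^f_{X\setminus Y|X}\Dotimes(-)$ is exact on $\SC_Y$ and $\CCyc$ is additive on short exact sequences, so $\SN\mapsto\CCyc(\SM)$ is well-defined on $K_Y$; the restriction $\SN\mapsto\SN|_{X\setminus Y}$ is exact and the shift by $df$ acts component-by-component on Lagrangian cycles, so $\SN\mapsto\CCirr(\SM)=\CCyc(\SN|_{X\setminus Y})+df$ is likewise additive on $K_Y$. The shift by $d\log g$ and the limit $\lim_{t\to+0}t\{\cdots\}$ constructed in Section \ref{subsec.5.2} are both linear in the input Lagrangian cycle, so the identity is a $\ZZ$-linear equation in $K_Y$.

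By the finite length of regular holonomic $\SD_X$-modules, every class of $K_Y$ is a $\ZZ$-linear combination of classes of simple objects of $\SC_Y$, each of which corresponds, via restriction to $X\setminus Y$, to a simple object $\iota_{Z_i^\circ !\ast}\SL_i$ of $\Modrh(\SD_{X\setminus Y})$, with $\SL_i$ an irreducible local system on a smooth connected locally closed analytic subset $Z_i^\circ\subset X\setminus Y$. For each such summand I would choose a closed hypersurface $Y_i\subset X$ containing $Y\cup(\overline{Z_i^\circ}\setminus Z_i^\circ)\cup(\overline{Z_i^\circ})_{\sing}$ and replace the simple summand by the meromorphic direct image $\SN_i:=(\iota_{Z_i^\circ\ast}\SL_i)(\ast Y_i)$; the class difference in $K_Y$ between the original simple summand and $\SN_i$ is represented by D-modules supported on $Y_i\cap\overline{Z_i^\circ}$, which has strictly smaller dimension than $\overline{Z_i^\circ}$, and so the identity for those correction terms follows from the induction hypothesis on $\dim\supp\SN$. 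The module $\SN_i$ itself satisfies the three conditions preceding Theorem \ref{thm-5-2-3} with $Y_2=Y_i$ and $Z=\overline{Z_i^\circ}$, so Theorem \ref{thm-5-2-3} applied to $\SE^f\Dotimes\SN_i$ yields the desired identity on each summand. Summing with multiplicities and invoking the induction hypothesis then produces the full statement.

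The main obstacle I expect is the coherent matching of the different auxiliary defining functions used for the different simple summands: each application of Theorem \ref{thm-5-2-3} to $\SE^f\Dotimes\SN_i$ naturally uses a defining function of $Y\cup Y_i$ rather than of $Y$ alone, and I must check that replacing a defining function $g$ of $Y$ by $g\cdot h_i$ (with $h_i$ a defining function of $Y_i\setminus Y$) alters the limit only by Lagrangian cycles supported over $Y_i$ of strictly smaller base dimension, which are then absorbed into the inductive correction terms. This bookkeeping is governed by the functoriality of the limit operation proved in Section \ref{subsec.5.2}, especially the compatibility with proper direct image established in Proposition \ref{prop-Lag_push}, essentially mirroring the resolution-of-singularities reduction already used in the proof of Theorem \ref{thm-5-2-21}.
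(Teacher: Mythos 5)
Your reduction to Theorem \ref{thm-5-2-3} by decomposing $\SN$ into simple constituents in the Grothendieck group $K_Y$ and inducting on $\dim\supp\SN$ is sound in outline, and is a reasonable alternative to the paper's decomposition via algebraic local cohomology functors. You also correctly identify where the real difficulty lies: Theorem \ref{thm-5-2-3}, applied to $\SE^f\Dotimes\SN_i$, produces the limit formula with the defining function $g\cdot h_i$ of $Y\cup Y_i$, whereas the target identity requires the defining function $g$ of $Y$ alone.

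The proposed resolution of that difficulty, however, does not hold up. You assert that replacing $g$ by $g\cdot h_i$ ``alters the limit only by Lagrangian cycles supported over $Y_i$ of strictly smaller base dimension, which are then absorbed into the inductive correction terms.'' There is no mechanism for that absorption. The inductive correction terms are the classes in $K_Y$ measuring the difference between the simple summand and $\SN_i$ (D-modules supported on $Y_i\cap\overline{Z_i^\circ}$); the putative defect in the limit caused by replacing $g\cdot h_i$ with $g$ is a priori an unrelated Lagrangian cycle supported over $Y_i$. Since the theorem asserts an \emph{equality} $\CCyc(\SE^f\Dotimes\SN_i)=\lim_t t\{\CCirr(\SE^f\Dotimes\SN_i)+d\log g\}$, and Theorem \ref{thm-5-2-3} already pins the left-hand side to the $g\cdot h_i$-limit, you in fact need the two limits to \emph{coincide}, not to differ by something that your induction controls. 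Nothing in your argument establishes this. Moreover even the weaker claim that the difference is a well-defined Lagrangian cycle supported over $Y_i$ requires an argument: the multiplicities arising in the two limits over components inside $Y_i$ are exactly what is at stake.

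This is precisely the content of the paper's proof. After the additivity reduction, the paper first treats the case where $Y_1\cup Y_2$ is normal crossing, $f$ has no indeterminacy, and $\SN$ is a meromorphic connection along $Y_2$, and verifies by an explicit coordinate computation that the $d\log g_1$-limit produces the same multiplicities as the $d\log(g_1g_2)$-limit, matching the formula of Corollary \ref{thm-CCquasi} coefficient by coefficient. The Remark following the theorem indicates that the underlying mechanism is an iterated-limit compatibility (a variant of Lemma \ref{new-lemmes}), letting one perform the $s\to+0$ collapse along $Y_2$ and the $t\to+0$ collapse along $Y_1$ in either order and obtain the same cycle. The general case is then obtained by resolving singularities and using the compatibility of $\lim_{t\to+0}$ with proper direct image (Proposition \ref{prop-Lag_push}). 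Your proposal would need to supply this equality of limits; without it, the step you flag as the ``main obstacle'' remains a genuine gap.
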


\begin{proof}
By the additivity of the operation of taking 
characteristic cycles, we can decompose the support of $\SN$ by 
the algebraic local cohomology functors (see Kashiwara 
\cite[Section 3.4]{Kas03}) and 
reduce the problem to the case of 
$\SM\simeq\SE_{X\setminus Y_1\vbar X}^f\Dotimes \SN$ in Theorem \ref{thm-5-2-3}. 
So we use the notations in it to have $Y=Y_1$. 
For $i=1,2$ let $g_i\colon X\longrightarrow\CC$ be
the (local) defining holomorphic function of $Y_i\subset X$
so that we have $g=g_1$. 
Then by Theorem \ref{thm-5-2-3} we obtain
\begin{equation}
\CCyc(\SN\vbar_{X\setminus Y_1})
=\lim_{s\rightarrow+0}\Bigl\{\CCyc(\SN\vbar_{X\setminus(Y_1\cup Y_2)})+s d\log g_2\Bigr\}
\end{equation}
(see also Ginsburg \cite[Theorem 3.3]{Gin86}). 
First, we consider the case where $Y_1\cup Y_2\subset X$ is a normal crossing divisor,
the meromorphic function $f\in\SO_X(\ast Y_1)$ has no point of indeterminacy on 
the whole $X$ and $\SN=\SO_X(\ast Y_2)$.
The problem being local, by taking a suitable local coordinate $x=(x_1,\dots,x_n)$ of 
$X$ we may assume that 
\begin{equation}
g_1(x)=\tl{g_1}(x)\cdot \prod_{i\in I_A}x_i^{m_i}, \quad
g_2(x)=\tl{g_2}(x)\cdot \prod_{i\in I_B}x_i^{m_i^\prime}, \quad
f(x)=\prod_{i\in I_P}x_i^{-k_i}
\end{equation}
for some subsets $I_A,I_B,I_P\subset \{1,2,\dots,n\}$ and positive integers $m_i>0$
($i\in I_A$), $m_i^\prime>0$ ($i\in I_B$), $k_i>0$ ($i\in I_P$), 
where $\tl{g_1}(x),\tl{g_2}(x)\neq0$ are invertible holomorphic functions.
In this situation, our primary goal is to show 
\begin{align}
&\lim_{t\to+0} t\Bigl\{\CCirr(\SM)+d\log g_1\Bigr\} \\
&=\lim_{t\to+0} \Bigl\{\CCyc(\SO_X(\ast Y_2)\vbar_{X\setminus Y_1})+tdf+td\log g_1\Bigr\} \\
&=\lim_{t\to+0} \Bigl\{\CCyc(\SO_{X\setminus(Y_1\cup Y_2)})+tdf +td\log g_1+td\log g_2\Bigr\} \\
&=\CCyc(\SE_{X\setminus(Y_1\cup Y_2)\vbar X}^{f\vbar_{X\setminus(Y_1\cup Y_2)}}) =\CCyc(\SM).
\end{align}
For this purpose, as $td\log g_1+td\log g_2=td\log(g_1g_2)$, we may replace $g_1$ and 
$g_2$ by
\begin{align}
\Bigl(\prod_{i\in I_A}x_i^{m_i}\Bigr)\cdot 
\Bigl(\prod_{i\in I_A\cap I_B}x_i^{m_i^\prime}\Bigr)
\quad 
\text{and}
\quad
\prod_{i\in I_B\setminus I_A}x_i^{m_i^\prime}
\end{align}
respectively and assume that $I_A\cap I_B=\emptyset$ from the first.
Then we can easily see that for any $t>0$ and $s>0$ the closure of the complex Lagrangian
manifold 
\begin{align}
&\Lambda_{t,s}\coloneq \CCyc(\SO_{X\setminus(Y_1\cup Y_2)}) +t(df+d\log g_1)+sd\log g_2 \\
&=[T_{X\setminus(Y_1\cup Y_2)}^\ast(X\setminus(Y_1\cup Y_2))] +t(df+d\log g_1)+sd\log g_2
\end{align}
in $T^\ast X$ is contained in the open subset 
$T^\ast(X\setminus(Y_1\cup Y_2))\subset T^\ast X$.
Recall that we have 
\begin{align}
\CCyc(\SO_X(\ast Y_2))&=\lim_{s\to+0}\Bigl\{[T_{X\setminus Y_2}^\ast
(X\setminus Y_2)]+ sd\log g_2\Bigr\} \\
&=[T_X^\ast X] +\sum_{i\in I_B}[T_{D_i}^\ast X] 
+\sum_{\substack{i,i^\prime\in I_B \\ i<i^\prime}}[T_{D_i\cap D_{i^\prime}}^\ast X]
+\cdots \cdots ,
\end{align}
where for $1\leq i\leq n$ we set $D_i\coloneq \{x_i=0\}\subset X$.
On the other hand, by Corollary \ref{thm-CCquasi} and Theorem \ref{thm-5-2-21}, we obtain
\begin{align}
&\lim_{t\to+0}\Bigl\{[T_X^\ast X]\vbar_{X\setminus Y_1}+tdf +td\log g_1\Bigr\} \\
&=\CCyc(\SE_{X\setminus Y_1\vbar X}^f) \\
&=[T_X^\ast X]+\sum_{j\in I_A}(k_j+1)\cdot[T_{D_j}^\ast X] 
+\sum_{\substack{j,j^\prime\in I_A \\ j<j^\prime}}(k_j+k_{j^\prime}+1)\cdot[
T_{D_j\cap D_{j^\prime}}^\ast X] +\cdots \cdots.
\end{align}
Similarly, for any $i_1,i_2,\dots,i_q\in I_B$ such that $i_1<i_2<\dots<i_q$ we can 
easily show that 
\begin{align}
&\lim_{t\to+0}\Bigl\{[T_{D_{i_1}\cap\dots\cap D_{i_q}}^\ast X]\vbar_{
X\setminus Y_1}+tdf +td\log g_1\Bigr\} \\
&=\sum_{p=0}^{\abs{I_A}}\sum_{\substack{j_1,\dots,j_p\in I_A \\ j_1<\dots<j_p}}
(k_{j_1}+\dots +k_{j_p}+1)\cdot[T_{D_{i_1}\cap \cdots\cap D_{i_q}\cap 
D_{j_1}\cap \cdots \cap D_{j_p}}^\ast X].
\end{align}
Then we finish the proof by observing that the formula 
\begin{align}
&\lim_{t\to+0}\Bigl\{\CCyc(\SO_X(\ast Y_2)\vbar_{X\setminus Y_1})+tdf +td\log g_1\Bigr\} \\
&=\sum_{p=0}^{\abs{I_A}}\sum_{q=0}^{\abs{I_B}}
\sum_{\substack{j_1,\dots,j_p\in I_A \\ j_1<\dots<j_p}}
\sum_{\substack{i_1,\dots,i_q\in I_B \\ i_1<\dots<i_q}}
(k_{j_1}+\cdots+k_{j_p}+1)\cdot 
[T_{D_{i_1}\cap \cdots \cap D_{i_q}\cap D_{j_1}\cap \cdots \cap D_{j_p}}^\ast X]
\end{align}
we thus obtain coincides with that of $\CCyc(\SE_{X\setminus(Y_1\cup Y_2)\vbar 
X}^{f\vbar_{X\setminus(Y_1\cup Y_2)}})$ obtained by Corollary \ref{thm-CCquasi}. 
Note that in the above situation 
we may replace $\SN=\SO_X(\ast Y_2)$ by any meromorphic 
connection on $X$ along the normal crossing divisor 
$Y_2 \subset X$. Namely the assertion holds true for 
such meromorphic connections. 
Next, we consider the general case.
Let $\nu\colon \tl{X}\longrightarrow X$ be a proper surjective morphism of 
complex manifolds inducing an isomorphism 
$\tl{X}\setminus \nu^{-1}(Y_1\cup Y_2)\simto X\setminus (Y_1\cup Y_2)$ such that 
the proper transform $\tl{Z}$ of $Z$ in $\tl{X}$ is smooth, the meromorphic
function $f\circ \nu$ on $\tl{X}$ has no point of indeterminacy on the whole 
$\tl{X}$ and for $D_1\coloneq \nu^{-1}(Y_1)\cap \tl{Z}$, 
$D_2\coloneq \nu^{-1}(Y_2)\cap \tl{Z}$ the divisor $D_1\cup D_2\subset \tl{Z}$ in 
$\tl{Z}$ is normal crossing.
In this situation, there exists an isomorphism
\begin{equation}
\SN(\ast Y_1)\simeq \bfD \nu_\ast \bfD\nu^\ast \SN(\ast Y_1)
\end{equation}
and for the proof we may replace $\SN$ by $\SN(\ast Y_1)$.
Moreover, by Kashiwara's equivalence (see \cite[Theorem 4.30]{Kas03}) the 
holonomic $D_{\tl{X}}$-module $\SN^\prime\coloneq \bfD\nu^\ast \SN(\ast Y_1)$ 
supported by the complex submanifold $\tl{Z}\subset \tl{X}$ corresponds to a 
meromorphic connection $\tl{\SN}$ on $\tl{Z}$ along 
the normal crossing divisor $D_1\cup D_2\subset \tl{Z}$.
Then by the first part of the proof we obtain an equality 
\begin{align}
&\lim_{t\to+0}\Bigl\{\CCyc(\tl{\SN}\vbar_{\tl{Z}\setminus D_1})+td(f\circ
\nu\vbar_{\tl{Z}})+td\log(g_1\circ\nu\vbar_{\tl{Z}})\Bigr\} \\
&=\lim_{t\to+0}\Bigl\{
\CCyc(\tl{\SN}\vbar_{\tl{Z}\setminus(D_1\cup D_2)})+td(f\circ\nu\vbar_{\tl{Z}})
+td\log(g_1\circ\nu\vbar_{\tl{Z}})+td\log(g_2\circ\nu\vbar_{\tl{Z}})
\Bigr\}.
\end{align}
This implies that we have 
\begin{align}
&\lim_{t\to+0}\Bigl\{\CCyc(\SN^\prime\vbar_{\tl{X}\setminus \nu^{-1}(Y_1)})
+td(f\circ\nu )+td\log(g_1\circ\nu )\Bigr\} \\
&=\lim_{t\to+0}\Bigl\{
\CCyc(\SN^\prime\vbar_{\tl{X}\setminus\nu^{-1}(Y_1\cup Y_2)})+td(f\circ\nu
)+td\log(g_1\circ\nu )+td\log(g_2\circ\nu )
\Bigr\}.
\end{align}
Then taking the direct images of the Lagrangian cycles on the both sides by 
the proper morphism $\nu\colon\tl{X}\longrightarrow X$, by  
\cite[Proposition 2.8]{FKT26} we obtain the assertion.
Here we used the fact that for any $t>0$ and $s>0$ the closure of the support of the 
Lagrangian cycle 
\begin{equation}
\CCyc(\SN^\prime\vbar_{\tl{X}\setminus \nu^{-1}(Y_1\cup Y_2)})
+t\bigl(d(f\circ\nu)+d\log(g_1\circ\nu)\bigr)+sd\log(g_2\circ\nu)
\end{equation} 
in $T^\ast\tl{X}$ is contained in the open subset $T^\ast(\tl{X}\setminus 
\nu^{-1}(Y_1\cup Y_2))\subset T^\ast\tl{X}$ and $\nu$ induces an isomorphism 
$\tl{X}\setminus \nu^{-1}(Y_1\cup Y_2)\simto X\setminus (Y_1\cup Y_2)$.
This completes the proof. 
\end{proof}

\begin{remark}
For some $b>0$ we set $I:=(0,b)$ and $J:=[0,b)$. 
Then the complex Lagrangian submanifolds $\Lambda_{t,s} 
\subset T^*X$ $((t,s) \in I^2)$ in the proof of 
Theorem \ref{thm-twistD} form a family of Lagrangian 
analytic subsets of $T^*X$ over the set $I^2$ i.e. 
a closed subanalytic subset $A_{I^2}$ of 
$T^*X \times I^2$ such that $A_{I^2} \cap ( T^*X \times \{ (t,s) \} ) 
\subset T^*X \times \{ (t,s) \} \simeq T^*X$ is a complex Lagrangian 
analytic subset of $T^*X$ for any $(t,s) \in I^2$. 
We can easily show that it can be extended to a 
family of Lagrangian analytic subsets of $T^*X$ over $J^2$. 
Then we can prove Theorem \ref{thm-twistD} 
by a slight modification of \cite[Lemma 2.5]{FKT26}.  
\end{remark}

\begin{example}
Consider the case where $X$ is the complex vector space $\CC^3$ of dimension $3$ endowed
with the standard coordinate $z=(z_1,z_2,z_3)=(x,y,z)$. Set $Y\coloneq \{x=0\}\subset X$,
$H\coloneq \{z=0\}\subset X$ and $Z\coloneq \{x^2+y^2+z^2=0\}\subset 
X$ and let $\SN$ be a regular holonomic $\SD_X$-module such that 
\begin{equation}
\supp\SN= Z, \quad \SN(\ast H)\simeq \SN
\end{equation}
and on a neighborhood of the complex submanifold $Z\setminus H\subset X$ 
in $X=\CC^3$ it is the direct image of an integrable connection $\SN_\red$ of rank one 
on $Z\setminus H$.
Then for the meromorphic function $f(x,y,z)=\displaystyle\frac{1}{x}$ along 
$Y=\{x=0\}\subset X$ we define an exponentially twisted holonomic $\SD_X$-module $\SM$ by
\begin{equation}
\SM\coloneq \SE_{X\setminus Y\vbar X}^f\Dotimes \SN.
\end{equation}
For the defining function $g(x,y,z)=x$ of $Y\subset X$, let us calculate the limit
\begin{align}
\CCyc(\SM)&=\lim_{t\to+0}t\Bigl\{\CCirr(\SM)+d\log g\Bigr\} \\
&=\lim_{t\to+0}\Bigl\{\CCyc(\SN\vbar_{X\setminus Y})+t(df+d\log g)\Bigr\}
\end{align}
in Theorem \ref{thm-twistD}.
Set $L_\pm\coloneq \Set*{(x,y,z)\in X}{z=0,x=\pm\sqrt{-1}y}\simeq \CC$ 
(resp. $K_\pm\coloneq \{(x,y,z)\in X\mid x=0,y=\pm\sqrt{-1}z\}\simeq \CC$)
so that we have $Z\cap H=L_+ \cup L_-$ (resp. $Z\cap Y=K_+\cup K_-$).
Then it is easy to see that in the open subset $T^\ast(X\setminus Y)\subset T^\ast X$
we have
\begin{align}
\CCyc(\SN\vbar_{X\setminus Y})&=[T_{Z\setminus Y}^\ast(X\setminus Y)] 
+[T_{L_+\setminus\{0\}}^\ast(X\setminus\{0\})]
+[T_{L_-\setminus\{0\}}^\ast(X\setminus\{0\})]
\end{align}
and 
\begin{align}
\lim_{t\to+0}\Bigl\{[T_{L_\pm\setminus\{0\}}^\ast(X\setminus\{0\})]+t(df+d\log g)\Bigr\}
=[T_{L_\pm}^\ast X] + 2\cdot[T_{\{0\}}^\ast X].
\end{align}
Moreover on the open subset $T^\ast(X\setminus H)\subset T^\ast H$ we obtain 
\begin{align}
&\lim_{t\to+0}\Bigl\{[T_{Z\setminus Y}^\ast(X\setminus Y)]+t(df+d\log g)\Bigr\} \\
&=[T_{Z\setminus H}^\ast (X\setminus H)] 
+ 2\cdot[T_{{K_+\setminus\{0\}}}^\ast(X\setminus \{0\})] 
+ 2\cdot[T_{{K_-\setminus\{0\}}}^\ast(X\setminus \{0\})].
\end{align}
Hence it remains for us to calculate the multiplicity of $[T_{\{0\}}^\ast X]$ in the limit
\begin{equation}
\lim_{t\to+0}\Bigl\{[T_{Z\setminus Y}^\ast(X\setminus Y)]+t(df+d\log g)\Bigr\}.
\end{equation}
For this purpose, we parametrize the conormal bundle 
$T_{Z\setminus Y}^\ast(X\setminus Y)$ as follows:
\begin{equation}
T_{Z\setminus Y}^\ast (X\setminus Y) =
\Set*{(x,y,z; \lambda x,\lambda y, \lambda z)}{\lambda\in\CC, x\neq0, x^2+y^2+z^2=0}.
\end{equation}
Then for each point $(\alpha,\beta,\gamma)\in (\CC^\ast)^3\subset\CC^3=T_{\{0\}}^\ast X$
we solve the equations
\begin{equation}
\begin{dcases}
\lambda x-\frac{t}{x^2}+\frac{t}{x}=\alpha \\
\lambda y=\beta \\
\lambda z=\gamma      
\end{dcases}
\label{eq-lambda}
\end{equation}
for $(x,y,z)\in Z\setminus Y$.
By the second and the third equations of \eqref{eq-lambda} and using the condition
$y^2+z^2=-x^2$ we obtain 
\begin{equation}
\beta^2+\gamma^2=\lambda^2(y^2+z^2)=-\lambda^2 x^2
\end{equation}
and hence
\begin{equation}\label{eq-lamx}
\lambda x=\pm\sqrt{-(\beta^2+\gamma^2)}.
\end{equation}
Moreover by the condition $x\neq0$ we see that the first equation of 
\eqref{eq-lambda} is equivalent to 
\begin{equation}\label{eq-alphax}
\alpha x^2-\lambda x^3 -tx+t=0.
\end{equation}
Substituting \eqref{eq-lamx} into \eqref{eq-alphax} we get also 
\begin{equation}
(\alpha\mp \sqrt{-(\beta^2+\gamma^2)})\cdot x^2-tx+t=0.
\end{equation}
If $(\alpha,\beta,\gamma)\in (\CC^\ast)^3$ satisfies the condition 
$\alpha^2+\beta^2+\gamma^2\neq0$ and $0<t\ll1$, then this equation of $x\in\CC$
has two solutions in $\CC^\ast\subset \CC$ and each of them tends to 
$0\in\CC$ as $t\longrightarrow+0$.
For such a solution $x\in\CC^\ast$ the two complex numbers 
\begin{equation}
\lambda=\frac{\pm\sqrt{-(\beta^2+\gamma^2)}}{x}\neq0
\end{equation}
obtained by \eqref{eq-lamx} go to infinity as $t\longrightarrow+0$.
By the second and the third equations of \eqref{eq-lambda}, this implies that if 
$(\alpha,\beta,\gamma)\in (\CC^\ast)^3$ satisfies the condition 
$\alpha^2+\beta^2+\gamma^2\neq0$ and $0<t\ll1$ the equations \eqref{eq-lambda} 
for $(x,y,z)\in Z\setminus Y$ have exactly four solutions and each of them tends 
to the origin $0=(0,0,0)\in X=\CC^3$ as $t\longrightarrow+0$.
We thus obtain 
\begin{align}
&\lim_{t\to+0}\Bigl\{[T_{Z\setminus Y}^\ast(X\setminus Y)]+t(df+d\log g)\Bigr\} \\
&=[\overline{{T_{Z_\reg}^\ast X}}] 
+ 2\cdot[T_{{K_+\setminus\{0\}}}^\ast(X\setminus \{0\})] 
+ 2\cdot[T_{{K_-\setminus\{0\}}}^\ast(X\setminus \{0\})]
+ 4\cdot[T_{\{0\}}^\ast X] 
\end{align}   
and hence
\begin{align}\label{eq-CCN}
\CCyc(\SM)&=\lim_{t\to+0}\Bigl\{\CCyc(\SN\vbar_{X\setminus Y})+t(df+d\log g)\Bigr\} \\
&=[\overline{{T_{Z_\reg}^\ast X}}] 
+[T_{L_+\setminus\{0\}}^\ast(X\setminus\{0\})]
+[T_{L_-\setminus\{0\}}^\ast(X\setminus\{0\})] \\
&+ 2\cdot[T_{{K_+\setminus\{0\}}}^\ast(X\setminus \{0\})] 
+ 2\cdot[T_{{K_-\setminus\{0\}}}^\ast(X\setminus \{0\})]
+ 8\cdot[T_{\{0\}}^\ast X].
\end{align}
In particular, we find that the multiplicity of $[T_{\{0\}}^\ast X]$ in 
the characteristic cycle $\CCyc(\SM)$ of $\SM$ is equal to 8.
On the other hand, by Kashiwara's formula in \cite{Kas83}, for the 
Euler obstruction $\Eu_Z$ of the complex hypersurface 
$Z=\{x^2+y^2+z^2=0\}\subset X=\CC^3$ we obtain 
\begin{equation}
\Eu_Z(0)=1+(-1)^3\cdot1=1-1=0.
\end{equation}
Hence it follows from \eqref{eq-CCN} that 
\begin{equation}
\chi(\Sol_X(\SM))(0)=0+1+1+2+2-8=-2.
\end{equation}
We can verify this result also by taking the inverse image of $\SM$ by 
the blow-up $X^\prime\longrightarrow X$ of $X=\CC^3$ along the origin
$\{0\}\subset X$ as in the last part of Section \ref{sec:qNor}.
But we omit the details.
\end{example}

\appendix
\section{A supplement to the proof of Proposition \ref{prop-1}}
Let $l\geq2$. For positive integers $k_1,k_2,\dots,k_l\in\ZZ_{>0}$, we 
define a closed submanifold $T_{k_1,\dots,k_l}\subset(S^1)^l$ by 
\begin{equation}
T_{k_1,\dots,k_l}\coloneq
\{(e^{\sqrt{-1}\theta_1},\dots,e^{\sqrt{-1}\theta_l})\in(S^1)^l
\mid \theta_i\in\RR, k_1\theta_1+\dots+k_l\theta_l\in 2\pi\ZZ \}.
\end{equation}
We set $W_{k_1,\dots,k_l}\coloneq (S^1)^l\setminus T_{k_1,\dots,k_l}$.
In this appendix, for the proof of Proposition \ref{prop-1} we will compute 
the cohomology groups $H^\ast((S^1)^l; \CC_{W_{k_1,\dots,k_l}})$ of 
the sheaf $\CC_{W_{k_1,\dots,k_l}}$ on $(S^1)^l$. 
We set 
\begin{equation}
d\coloneq \gcd(k_1,\dots,k_l) \quad \in \ZZ.
\end{equation}

\subsection{The case of $d=1$}\label{subsec-A1}
In this subsection, we assume that $d=1$.
We introduce some notations.
For $1\leq m\leq l$ we set  $d_m\coloneq \gcd(k_1,\dots,k_m)$ and 
\begin{equation}
k_m^\prime \coloneq
\begin{dcases}
\frac{k_1}{d_2} & (m=1) \\
\frac{k_m}{d_m} & (m\geq2).
\end{dcases}
\end{equation}
For $2\leq m\leq l-1$ we set $d_m^\prime\coloneq \displaystyle\frac{d_m}{d_{m+1}}$.
For $2\leq m\leq l$ we fix integers $N_1^{(m)},N_2^{(m)},\dots,N_m^{(m)}\in\ZZ$ such that
\begin{equation}\label{eq-Bez}
k_1^\prime N_1^{(m)} + k_2^\prime N_2^{(m)} + \dots + k_m^\prime N_m^{(m)} = 1.
\end{equation}
(Note that the integers $k_1^\prime,\dots,k_m^\prime$ are coprime.)
Let us define a morphism $\phi\colon (S^1)^{l-1}\longrightarrow (S^1)^l$
by 
\begin{equation}
(e^{\sqrt{-1} t_1},\dots,e^{\sqrt{-1} t_{l-1}})\in(S^1)^{l-1} 
\longmapsto 
(e^{\sqrt{-1} \phi_1(t_1,\dots,t_{l-1})},\dots,e^{\sqrt{-1} 
\phi_{l}(t_1,\dots,t_{l-1})})\in(S^1)^l
\end{equation}
where we set
\begin{empheq}{alignat=8}
\phi_1(t_1,\cdots,t_{l-1})\coloneq & & k_2^\prime & t_1 & +  k_3^\prime 
N_1^{(2)} &t_2 & + k_4^\prime N_1^{(3)} & t_3 & + & \cdots & + &
k_{l-1}^\prime N_1^{(l-2)} & t_{l-2} & + k_l N_1^{(l-1)} & t_{l-1}, \notag \\
\phi_2(t_1,\cdots,t_{l-1})\coloneq & - & k_1^\prime & t_1 & + k_3^\prime 
N_2^{(2)} &t_2 & + k_4^\prime N_2^{(3)} & t_3 & + & \cdots & + &
k_{l-1}^\prime N_2^{(l-2)} & t_{l-2} & + k_l N_2^{(l-1)} & t_{l-1}, \notag \\
\phi_3(t_1,\cdots,t_{l-1})\coloneq & &  & & -d_2^\prime &
t_2 & + k_4^\prime N_3^{(3)} & t_3 & + & \cdots & + &
k_{l-1}^\prime N_3^{(l-2)} & t_{l-2} & + k_l N_3^{(l-1)} & t_{l-1}, \notag \\
\phi_4(t_1,\cdots,t_{l-1})\coloneq & & & & & & - d_3^\prime & t_3 & + & \cdots & + &
k_{l-1}^\prime N_4^{(l-2)} & t_{l-2} & + k_l N_4^{(l-1)} & t_{l-1}, \notag \\
& & & & & & & & & \vdots & & & & & \notag \\
\phi_{l-1}(t_1,\cdots,t_{l-1})\coloneq & & & & & & & & & & &
\quad \, -d_{l-2}^\prime & t_{l-2} & + k_l N_{l-1}^{(l-1)} & t_{l-1}, \notag \\
\phi_l(t_1,\cdots,t_{l-1})\coloneq & & & & & & & & & & & & & \quad 
\,\,\, -d_{l-1} & t_{l-1}. \notag
\end{empheq}   
Then we can easily check that $\phi((S^1)^{l-1})\subset T_{k_1,\dots,k_l}$.
We have the following lemma.
\begin{lemma}
The morphism $\phi\colon (S^1)^{l-1}\longrightarrow (S^1)^l$ induces 
a diffeomorphism from $(S^1)^{l-1}$ to $T_{k_1,\dots,k_l}$.
\end{lemma}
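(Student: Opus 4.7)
The plan is to recast the lemma as a statement about integer lattices. Let $A$ denote the $l \times (l-1)$ integer matrix whose $j$-th column is the coefficient vector of $t_j$ in $(\phi_1, \ldots, \phi_l)$, so that $\phi$ is the map of compact tori induced by the $\ZZ$-linear map $A \colon \ZZ^{l-1} \to \ZZ^l$. Since $d = \gcd(k_1, \ldots, k_l) = 1$, the closed subgroup $T_{k_1,\ldots,k_l} \subset (S^1)^l$ is the connected compact $(l-1)$-dimensional torus whose cocharacter lattice is $K \coloneq \{a \in \ZZ^l : k_1 a_1 + \cdots + k_l a_l = 0\}$. Through the standard dictionary between tori and lattices, the lemma is equivalent to proving: (i) the image of $A$ is contained in $K$ (so that $\phi$ lands in $T_{k_1,\ldots,k_l}$), and (ii) the induced map $A \colon \ZZ^{l-1} \to K$ is an isomorphism of abelian groups. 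Once (i) and (ii) are established, $\phi$ is a smooth homomorphism between compact Lie groups of the same dimension $l-1$, with full-rank differential and trivial kernel, hence a diffeomorphism onto $T_{k_1,\ldots,k_l}$.

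Claim (i) would be checked column by column. For the first column, $\vec{k} \cdot (k_2', -k_1', 0, \ldots, 0)^T = k_1 k_2' - k_2 k_1' = 0$ is immediate from the definitions of $k_1'$ and $k_2'$. For column $j$ with $2 \leq j \leq l-1$, whose nonzero entries are $k_{j+1}' N_i^{(j)}$ in rows $i = 1, \ldots, j$ and $-d_j'$ in row $j+1$, the dot product against $\vec{k}$ equals
\[
k_{j+1}' \Bigl( \sum_{i=1}^{j} k_i N_i^{(j)} \Bigr) - k_{j+1} d_j',
\]
which vanishes by the Bezout-type relation $\sum_{i=1}^{j} k_i N_i^{(j)} = d_j$ satisfied by the chosen $N_i^{(j)}$ together with the algebraic identity $k_{j+1}' d_j = k_{j+1} d_j'$ (both sides equal $k_{j+1} d_j / d_{j+1}$).

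For claim (ii) the plan is induction on $l$, with base case $l=2$ immediate from $A = (k_2, -k_1)^T$ and $K = \ZZ \cdot (k_2, -k_1)^T$ (using $\gcd(k_1, k_2) = 1$). The inductive step exploits the block decomposition
\[
A = \begin{pmatrix} \widetilde{A} & \vec{w} \\ 0 & -d_{l-1} \end{pmatrix}, \qquad \vec{w} = \bigl( k_l N_i^{(l-1)} \bigr)_{i=1}^{l-1},
\]
where $\widetilde{A}$ is the top-left $(l-1) \times (l-2)$ submatrix. A key identification is that $\widetilde{A}$ coincides with the matrix produced by the same construction applied to the rescaled tuple $(k_1/d_{l-1}, \ldots, k_{l-1}/d_{l-1})$, which already has gcd $1$; indeed the quantities $k_i'$, $d_j'$ for $j \leq l-2$, and $N_i^{(m)}$ for $m \leq l-2$ are all invariant under this rescaling, so the inductive hypothesis applies verbatim to $\widetilde{A}$. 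Injectivity of $A$ then follows from the last row together with induction on $\widetilde{A}$. For surjectivity, given $a \in K$ one first observes that $d_{l-1} \mid a_l$: the relation $\sum k_i a_i = 0$ together with $d_{l-1} \mid k_i$ for $i < l$ gives $d_{l-1} \mid k_l a_l$, and $\gcd(d_{l-1}, k_l) = d_l = 1$ then forces $d_{l-1} \mid a_l$. Hence $t_{l-1} \coloneq -a_l / d_{l-1} \in \ZZ$, and a short computation using $\sum_{i=1}^{l-1} k_i N_i^{(l-1)} = d_{l-1}$ shows that $(a_i - k_l N_i^{(l-1)} t_{l-1})_{i=1}^{l-1}$ lies in the kernel of $(k_1, \ldots, k_{l-1}) \cdot$, so the inductive hypothesis supplies $(t_1, \ldots, t_{l-2})$. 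The main technical obstacle is the bookkeeping underlying this reduction: identifying $\widetilde{A}$ with the rescaled matrix so that the induction runs uniformly, and checking at every level that the chosen Bezout integers $N_i^{(m)}$ satisfy $\sum_i k_i N_i^{(m)} = d_m$, which is the identity that drives both (i) and the surjectivity step.
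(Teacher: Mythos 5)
Your lattice-theoretic recast is correct in principle and is a genuinely cleaner route than the paper's argument, which first checks that the Jacobian of $\phi$ has rank $l-1$ (so $\phi$ is an immersion) and then verifies injectivity and surjectivity by ad hoc manipulations of congruences modulo $2\pi\ZZ$. Identifying $T_{k_1,\dots,k_l}$ with $V_K/K$ (using connectedness, which holds because $d=1$) and reducing the lemma to the statement that $A\colon\ZZ^{l-1}\to K$ is an isomorphism is an exact translation, and your induction via the block-triangular structure, including the identification of $\widetilde{A}$ with the matrix produced from $(k_1/d_{l-1},\dots,k_{l-1}/d_{l-1})$, is sound and handles injectivity and surjectivity uniformly.

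The one genuine gap is in the relation you attribute to the chosen $N_i^{(m)}$. You invoke $\sum_{i=1}^{m}k_iN_i^{(m)}=d_m$, and indeed this is exactly what is needed both for claim (i) and for the surjectivity step. But it does \emph{not} follow from the relation the paper actually imposes, namely $\sum_{i=1}^{m}k_i'N_i^{(m)}=1$ with $k_1'=k_1/d_2$ and $k_i'=k_i/d_i$ for $i\ge2$. The two conditions coincide for $m=2$ but diverge for $m\ge3$, and can even be simultaneously unsolvable: with $(k_1,k_2,k_3)=(4,6,3)$ one has $d_2=2$, $d_3=1$, $(k_1',k_2',k_3')=(2,3,3)$, and requiring both $2N_1+3N_2+3N_3=1$ and $4N_1+6N_2+3N_3=1$ forces $3N_3=1$. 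Moreover, with the paper's condition and, say, $N^{(3)}=(-1,0,1)$ and $k_4=5$, the third column of $A$ has inner product $-10$ with $\vec{k}$, so $\phi$ does not even land in $T_{k_1,\dots,k_l}$. The paper's own injectivity and surjectivity computations also implicitly use $\sum_i k_iN_i^{(m)}=d_m$ (it is what makes the coefficient of $\tilde{t}_{l-1}$ equal to $k_ld_{l-1}$), so this looks like a slip in the paper's definition of $k_i'$; the quantities that should appear in the $m$-th Bezout relation are $k_i/d_m$ (which vary with $m$), not the fixed $k_i'$. You should therefore state the relation $\sum_{i=1}^{m}k_iN_i^{(m)}=d_m$ as the \emph{defining} condition on the $N_i^{(m)}$ rather than as a consequence of the paper's condition; with that change the rest of your argument is complete.
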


\begin{proof}
The Jacobian matrix of $\phi$ (at any point in $(S^1)^{l-1}$) is given by
\begin{equation}
\begin{pmatrix}
k_2^\prime & k_3^\prime N_1^{(2)} & \cdots & k_{l-1}^\prime N_1^{(l-2)} & k_l N_1^{(l-1)} \\
-k_1^\prime & k_3^\prime N_2^{(2)} & \cdots & k_{l-1}^\prime N_2^{(l-2)} & k_l N_2^{(l-1)} \\
& -d_2^\prime & \cdots & k_{l-1}^\prime N_3^{(l-2)} & k_l N_3^{(l-1)} \\
&& \ddots & \vdots & \vdots \\
&\text{\huge{0}}&& -d_{l-2}^\prime & k_l N_{l-1}^{(l-1)} \\
&&&& -d_{l-1}  
\end{pmatrix}.
\end{equation}
Since the rank of it is equal to $l-1$, the morphism $\phi\colon 
(S^1)^{l-1}\longrightarrow (S^1)^l$ is an immersion.
Hence it suffices to show that $\phi$ induces a bijection between 
$(S^1)^{l-1}$ and $T_{k_1,\dots,k_l}$.
Let us first treat the case of $l=2$.
In this case, we have $k_1^\prime=k_1$ and $k_2^\prime=k_2$.
If $e^{\sqrt{-1} t_1}, e^{\sqrt{-1} t_1^\prime}\in S^1$ $(t_1,t_1^\prime\in\RR)$ 
satisfy $\phi(e^{\sqrt{-1} t_1})=\phi(e^{\sqrt{-1} t_1^\prime})$, then 
by the definition of $\phi$ we have $k_2(t_1-t_1^\prime), k_1(t_1-t_1^\prime)\in 2\pi\ZZ$.
By the assumption that $d=\gcd(k_1,k_2)=1$, we obtain 
$t_1-t_1^\prime\in 2\pi\ZZ$ and hence $\phi$ is injective.
If $(e^{\sqrt{-1} \theta_1},e^{\sqrt{-1} \theta_2})\in T_{k_1,k_2}$ $(\subset(S^1)^2)$ 
$(0\leq\theta_1, \theta_2< 2\pi)$ is given,
then there exists an integer $M\in\ZZ$ such that $k_1\theta_1+ k_2\theta_2=2\pi M$.
Since $d=1$,  there exists an integer $b_2\in\ZZ$ such that $k_2 b_2 +M\in k_1\ZZ$.
We set $t_1\coloneq \displaystyle -\frac{\theta_2+2\pi b_2}{k_1}$.
Then we can show that $k_2t_1-\theta_1, -k_1t_1-\theta_2\in 2\pi\ZZ$.
Thus we obtain $\phi(e^{\sqrt{-1} t_1})=(e^{\sqrt{-1} \theta_1},e^{\sqrt{-1} \theta_2})$.
Therefore, $\phi$ induces a bijection between $S^1$ and $T_{k_1,k_2}$.
Next, let us consider the case of $l\geq3$.
If $(e^{\sqrt{-1} t_1},\dots,e^{\sqrt{-1} t_{l-1}}), (e^{\sqrt{-1} t_1^\prime},
\dots,e^{\sqrt{-1} t_{l-1}^\prime})\in (S^1)^{l-1}$ $(t_i,t_i^\prime\in\RR)$ satisfy 
$\phi\bigl((e^{\sqrt{-1} t_1},\dots,e^{\sqrt{-1} t_{l-1}})\bigr)=
\phi\bigl((e^{\sqrt{-1} t_1^\prime},\dots,e^{\sqrt{-1} t_{l-1}^\prime})
\bigr)$, then by the definition of $\phi$ we have
\begin{empheq}[left=\empheqlbrace]{alignat=7}
k_2^\prime\tl{t_1} & +k_3^\prime N_1^{(2)}\tl{t_2} &+ \cdots &
 +k_{l-1}^\prime N_1^{(l-2)}\tl{t_{l-2}} & +k_l N_1^{(l-1)}\tl{t_{l-1}} & 
\in 2\pi\ZZ \tag{$B_{1}$}\\
-k_1^\prime\tl{t_1} & +k_3^\prime N_2^{(2)}\tl{t_2} &+ \cdots & 
+k_{l-1}^\prime N_2^{(l-2)}\tl{t_{l-2}} & +k_l N_2^{(l-1)}
\tl{t_{l-1}} & \in 2\pi\ZZ \tag{$B_{2}$}\\
& \qquad -d_2^\prime\tl{t_2} &+ \cdots & +k_{l-1}^\prime N_3^{(l-2)}\tl{t_{l-2}} 
& +k_l N_3^{(l-1)}\tl{t_{l-1}} & \in 2\pi\ZZ \tag{$B_{3}$}\\
&& & \vdots & \notag \\
&&& \qquad \quad -d_{l-2}^\prime\tl{t_{l-2}} & +k_l N_{l-1}^{(l-1)}\tl{t_{l-1}} 
& \in 2\pi\ZZ \tag{$B_{l-1}$}\\
&&&& -d_{l-1}\tl{t_{l-1}}  & \in 2\pi\ZZ \tag{$B_{l}$}
\end{empheq}
where for each $1\leq m\leq l-1$ we set $\tl{t_m}\coloneq t_m-t_m^\prime$.
It follows from $k_1\times(B_1)+k_2\times(B_2)+\dots+k_{l-1}\times(B_{l-1})$ and (\ref{eq-Bez}) that
\begin{equation}
k_l d_{l-1} \tl{t_{l-1}}\in 2\pi(k_1\ZZ+\dots+k_l\ZZ).
\end{equation}
Since $k_1\ZZ+\dots+k_l\ZZ= d_{l-1}\ZZ$, we get $k_l\tl{t_{l-1}}\in 2\pi\ZZ$.
From $(B_l)$, we have $d_{l-1}\tl{t_{l-1}}\in 2\pi\ZZ$.
By the assumption that $d=1$, the integers $d_{l-1}$ and $k_l$ are coprime, and hence 
\begin{equation}
\tl{t_{l-1}}=t_{l-1}-t_{l-1}^\prime\in 2\pi\ZZ.
\end{equation} 
Then, it follows from $(B_1),\dots,(B_{l-1})$ that
\begin{empheq}[left=\empheqlbrace]{alignat=7}
k_2^\prime\tl{t_1} & +k_3^\prime N_1^{(2)}\tl{t_2} &+ \cdots & 
+k_{l-1}^\prime N_1^{(l-2)}\tl{t_{l-2}} & \in 2\pi\ZZ \\
-k_1^\prime\tl{t_1} & +k_3^\prime N_2^{(2)}\tl{t_2} &+ \cdots & 
+k_{l-1}^\prime N_2^{(l-2)}\tl{t_{l-2}} & \in 2\pi\ZZ \\
& \qquad -d_2^\prime\tl{t_2} &+ \cdots & +k_{l-1}^\prime 
N_3^{(l-2)}\tl{t_{l-2}} & \in 2\pi\ZZ \\
&& \vdots   &  \notag \\
&&& \qquad -d_{l-2}^\prime\tl{t_{l-2}}  & \in 2\pi\ZZ. 
\end{empheq}
By repeating the same procedure, we obtain 
\begin{equation}
t_1-t_1^\prime,t_2-t_2^\prime,\dots,t_{l-2}-t_{l-2}^\prime \in 2\pi\ZZ.
\end{equation} 
This implies that $\phi\colon (S^1)^{l-1}\longrightarrow (S^1)^l$ is injective.
If $(e^{\sqrt{-1} \theta_1},\dots,e^{\sqrt{-1} \theta_l})\in T_{k_1,
\dots,k_l}$ $(\subset(S^1)^l)$ $(0\leq\theta_i<2\pi)$ is given, 
then there exists an integer $M\in\ZZ$ such that 
\begin{equation}\label{eq-M}
k_1\theta_1+k_2\theta_2+\dots+k_l\theta_l= 2\pi M.
\end{equation}
Since $d=\gcd(k_1,\dots,k_l)=1$, there exist integers $b_2,\dots,b_l\in\ZZ$ such that 
\begin{equation}\label{eq-b}
k_2 b_2 + k_3 b_3 +\dots+ k_l b_l + M \in k_1\ZZ.
\end{equation}
We take real numbers $t_1,\dots,t_{l-1}\in\RR$ such that
\begin{empheq}[left=\empheqlbrace]{alignat=10}
-k_1^\prime t_1 & +k_3^\prime N_2^{(2)}t_2  & + \cdots & +
k_{l-1}^\prime N_2^{(l-2)}t_{l-2} & +k_l N_2^{(l-1)}t_{l-1} &=
\theta_2+2\pi b_2 \tag{$C_{2}$}\\
& \qquad -d_2^\prime t_2  & + \cdots & +k_{l-1}^\prime N_3^{(l-2)}
t_{l-2} & +k_l N_3^{(l-1)}t_{l-1} &=\theta_3+2\pi b_3 \tag{$C_{3}$}\\
&&& \vdots &&  \notag \\
&&& \qquad \quad -d_{l-2}^\prime t_{l-2} & +k_l N_{l-1}^{(l-1)}
t_{l-1} &=\theta_{l-1}+2\pi b_{l-1} \tag{$C_{l-1}$}\\
&&&& \qquad -d_{l-1}t_{l-1} &=\theta_l+2\pi b_l \tag{$C_{l}$}
\end{empheq}
By $k_2\times(C_2)+k_3\times(C_3)+\dots+k_l\times(C_l)$ and (\ref{eq-Bez}), we have 
\begin{equation}\label{eq-k}
\begin{split}
&-k_1(k_2^\prime t_1+k_3^\prime N_1^{(2)}t_2+\dots+k_{l-1}^\prime 
N_1^{(l-2)}t_{l-2}+k_l N_1^{(l-1)}t_{l-1}) \\
&=k_2\theta_2+\dots+k_l\theta_l+2\pi(k_2b_2+\dots+k_lb_l).
\end{split}
\end{equation}
It follows from (\ref{eq-M}), (\ref{eq-b}) and (\ref{eq-k}) that 
\begin{equation}
k_2^\prime t_1+k_3^\prime N_1^{(2)}t_2+\dots+k_{l-1}^\prime 
N_1^{(l-2)}t_{l-2}+k_l N_1^{(l-1)}t_{l-1}-\theta_1\in 2\pi\ZZ.
\end{equation}
This implies that $\phi\bigl((e^{\sqrt{-1} t_1},\dots,
e^{\sqrt{-1} t_{l-1}})\bigr)=(e^{\sqrt{-1} \theta_1},\dots,e^{\sqrt{-1} \theta_l})$.
Therefore, $\phi$ induces a bijection between $(S^1)^{l-1}$ and $T_{k_1,\dots,k_l}$.
This completes the proof. 
\end{proof}

From now on, for an integer $j\in\ZZ$ and a smooth manifold $M$ 
denote by $H_\dR^j(M)$ the $j$-th de Rham cohomology group of $M$.
\begin{lemma}\label{lem-d1rank}
For $j\in\ZZ$ let $H^j\phi^\ast\colon H_\dR^j((S^1)^l)\longrightarrow 
H_\dR^j((S^1)^{l-1})$ be the morphism induced by
$\phi\colon (S^1)^{l-1}\longrightarrow (S^1)^l$.
Then we have
\begin{equation}
\rank H^j\phi^\ast=
\begin{dcases}
\binom{l-1}{j} & (0\leq j\leq l-1) \\
\\
\quad \,\, 0 & \mathrm{(otherwise)}.
\end{dcases}
\end{equation} 
\end{lemma}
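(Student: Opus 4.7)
The strategy is to exploit the fact that both $(S^1)^l$ and $(S^1)^{l-1}$ are compact tori whose de Rham cohomology rings are exterior algebras on the translation-invariant $1$-forms. Concretely, $H_\dR^*((S^1)^l) \simeq \Lambda^\bullet (\CC\langle d\theta_1, \ldots, d\theta_l\rangle)$ and similarly for $(S^1)^{l-1}$ with $dt_1, \ldots, dt_{l-1}$. Since $\phi^*$ is a graded $\CC$-algebra homomorphism between these exterior algebras, its behaviour in all degrees is completely determined by the degree-one piece $H^1\phi^*$.

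First I would compute $H^1\phi^*$ explicitly: from the formulas for $\phi_i(t_1, \ldots, t_{l-1})$ in the definition of $\phi$, one reads off $\phi^*(d\theta_i) = d\phi_i$, a constant $\CC$-linear combination of $dt_1, \ldots, dt_{l-1}$ whose coefficients form the transpose of the Jacobian matrix displayed in the proof of the preceding lemma. That proof already observed that this Jacobian has rank $l-1$: its bottom $(l-1)\times(l-1)$ submatrix (formed by rows $2, 3, \ldots, l$) is lower triangular with nonzero diagonal entries $-k_1^\prime, -d_2^\prime, -d_3^\prime, \ldots, -d_{l-1}$, so its determinant is nonzero. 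Hence $H^1\phi^*$ surjects onto the $(l-1)$-dimensional space $H_\dR^1((S^1)^{l-1})$.

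Once surjectivity in degree one is established, the extension to all degrees is a purely algebraic observation: if $V \to W$ is a surjection of $\CC$-vector spaces, then $\Lambda^j V \to \Lambda^j W$ is surjective for every $j$, since each basis wedge $dt_{i_1} \wedge \cdots \wedge dt_{i_j} \in \Lambda^j W$ is the image of any wedge of chosen lifts. Applying this with $V = H_\dR^1((S^1)^l)$ and $W = H_\dR^1((S^1)^{l-1})$ shows that $H^j\phi^* = \Lambda^j H^1\phi^*$ maps onto $H_\dR^j((S^1)^{l-1})$ for every $j$. Therefore $\rank H^j\phi^* = \dim H_\dR^j((S^1)^{l-1}) = \binom{l-1}{j}$ for $0 \leq j \leq l-1$, and the rank vanishes otherwise since $H_\dR^j((S^1)^{l-1}) = 0$ in that range.

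I do not expect any real obstacle: the genuine work was already carried out in the preceding lemma, where the rank-$(l-1)$ property of the Jacobian was verified in order to show that $\phi$ is an immersion. The only care needed here is the translation between the geometric statement (full-rank Jacobian) and the cohomological statement (surjectivity of $H^1\phi^*$), together with the elementary exterior-algebra lifting argument in the last step.
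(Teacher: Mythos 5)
Your proposal is correct and follows essentially the same route as the paper: both identify $H^1\phi^*$ with the Jacobian matrix of $\phi$, observe that its rank is $l-1$ (via the lower-triangular minor with nonzero diagonal), and then pass to all degrees via the exterior-algebra structure of torus cohomology. The paper phrases the last step as the K\"unneth isomorphism $H_\dR^j \simeq \bigwedge^j H_\dR^1$ intertwining $H^j\phi^\ast$ with $\bigwedge^j H^1\phi^\ast$, whereas you phrase it as $\phi^*$ being a graded ring homomorphism so that surjectivity in degree one propagates to every exterior power; these are the same observation expressed in equivalent language.
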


\begin{proof}
For the polar coordinates $(\theta_1,\dots,\theta_l)$ of $(S^1)^l
=\{(e^{\sqrt{-1} \theta_1},\dots,e^{\sqrt{-1} \theta_l})\}$ and \\
$(t_1,\dots,t_{l-1})$ of $(S^1)^{l-1}=\{(e^{\sqrt{-1} t_1},
\dots,e^{\sqrt{-1} t_{l-1}})\}$, the de Rham cohomology classes 
$[d\theta_1],\dots,[d\theta_l]\in H_\dR^1((S^1)^l)$ and $[dt_1],
\dots,[dt_{l-1}]\in H_\dR^1((S^1)^{l-1})$ are the bases of $H_\dR^1
((S^1)^l)$ and $H_\dR^1((S^1)^{l-1})$, respectively.
From the definition of $\phi$, the matrix representation of 
$H^1\phi^\ast\colon H_\dR^1((S^1)^l)\longrightarrow 
H_\dR^1((S^1)^{l-1})$ with respect to the above basis is
\begin{equation}
\begin{pmatrix}
k_2^\prime & k_3^\prime N_1^{(2)} & \cdots & k_{l-1}^\prime N_1^{(l-2)} & k_l N_1^{(l-1)} \\
-k_1^\prime & k_3^\prime N_2^{(2)} & \cdots & k_{l-1}^\prime N_2^{(l-2)} & k_l N_2^{(l-1)} \\
& -d_2^\prime & \cdots & k_{l-1}^\prime N_3^{(l-2)} & k_l N_3^{(l-1)} \\
&& \ddots & \vdots & \vdots \\
&\text{\huge{0}}&& -d_{l-2}^\prime & k_l N_{l-1}^{(l-1)} \\
&&&& -d_{l-1}  
\end{pmatrix}.
\end{equation}
Therefore, we obtain $\rank H^1\phi^\ast=l-1$. 
Namely $H^j\phi^\ast$ is surjective. 
By the K\"{u}nneth formula, for $j\in\ZZ$ there exists a commutative diagram
\begin{equation}
\vcenter{
\xymatrix@M=7pt{
H_\dR^j((S^1)^l) \ar[r]^-{\sim} \ar@{->}[d]_-{H^j\phi^\ast} &
\bigwedge^j H_\dR^1((S^1)^l) \ar@{->}[d]^-{\bigwedge^j H^1\phi^\ast} \\
H_\dR^j((S^1)^{l-1}) \ar[r]^-{\sim} &
\bigwedge^j H_\dR^1((S^1)^{l-1})
}}\end{equation}   
where $\bigwedge^j H_\dR^1((S^1)^l)$ (resp. $\bigwedge^j H_\dR^1((S^1)^{l-1}), 
\bigwedge^j H^1\phi^\ast$) is the $j$-th exterior power of $H_\dR^1((S^1)^l)$ 
(resp. $H_\dR^1((S^1)^{l-1}), H^1\phi^\ast$).
Thus we have 
\begin{align}
\rank H^j\phi^\ast&=\rank(\textstyle\bigwedge^j H^1\phi^\ast) \\
&=
\begin{dcases}
\binom{l-1}{j} & (0\leq j\leq l-1) \\
\\
\quad \,\, 0 & \mathrm{(otherwise)}.   
\end{dcases}
\end{align}
This completes the proof.
\end{proof}

\subsection{The computation of $H^\ast((S^1)^l; \CC_{W_{k_1,\dots,k_l}})$}
In this subsection, we treat the general case where $d=\gcd(k_1,\dots,k_l)$ is not necessarily equal to 1.
For $1\leq m\leq l$ we set $\displaystyle\tl{k_m}\coloneq\frac{k_m}{d}$.
For $0\leq j\leq d-1$ we define a closed submanifold 
$T_{\tl{k_1},\dots,\tl{k_l}}^{(j)}\subset (S^1)^l$ by
\begin{equation}
T_{\tl{k_1},\dots,\tl{k_l}}^{(j)}\coloneq
\Set*{(e^{\sqrt{-1}\theta_1},\dots,e^{\sqrt{-1}\theta_l})\in(S^1)^l
} {\theta_i\in\RR, \tl{k_1}\theta_1+\dots+\tl{k_l}\theta_l\in \frac{2j\pi}{d} + 2\pi\ZZ  }.
\end{equation}
Then we have 
\begin{equation}
T_{k_1,\dots,k_l}=\bigsqcup_{j=0}^{d-1} T_{\tl{k_1},\dots,\tl{k_l}}^{(j)}.
\end{equation}
Since $\gcd(\tl{k_1},\dots,\tl{k_l})=1$, as in Section 
\ref{subsec-A1}, we can define a morphism $\phi\colon (S^1)^{l-1}\longrightarrow 
(S^1)^l$ which induces a diffeomorphism from $(S^1)^{l-1}$ to $T_{\tl{k_1},\dots,
\tl{k_l}}=T_{\tl{k_1},\dots,\tl{k_l}}^{(0)}$.
For $0\leq j\leq d-1$ we define a morphism $\psi_j\colon(S^1)^l\longrightarrow (S^1)^l$ by
\begin{equation}
(e^{\sqrt{-1}\theta_1},e^{\sqrt{-1}\theta_2},\dots,e^{\sqrt{-1}\theta_l})\in(S^1)^l \longmapsto
(e^{\sqrt{-1}(\theta_1+\frac{2j\pi }{d \tl{k_1}})},
e^{\sqrt{-1}\theta_2},\dots,e^{\sqrt{-1}\theta_l})\in(S^1)^l.
\end{equation}
Note that for each $0\leq j\leq d-1$ the morphism $\psi_j$ induces a 
diffeomorphism from $T_{\tl{k_1},\dots,\tl{k_l}}^{(0)}$ to $T_{\tl{k_1},\dots,\tl{k_l}}^{(j)}$.
Then the morphisms $\psi_j\circ\phi\colon (S^1)^{l-1}\longrightarrow (S^1)^l$ 
($0\leq j\leq d-1$) induce the one 
$\Phi\colon\displaystyle\bigsqcup_{d}(S^1)^{l-1}\longrightarrow(S^1)^l$.
The following lemma is clear.

\begin{lemma}\label{lem-diffeo}
In the above situation, the morphism 
$\Phi\colon\displaystyle\bigsqcup_{d}(S^1)^{l-1}\longrightarrow(S^1)^l$ 
induces a diffeomorphism from $\displaystyle\bigsqcup_{d}(S^1)^{l-1}$ to $T_{k_1,\dots,k_l}$.
\end{lemma}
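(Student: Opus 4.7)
The plan is to assemble this lemma directly from the two building blocks already introduced just above. Since $\gcd(\tilde{k_1},\dots,\tilde{k_l})=1$, the first lemma of Subsection~\ref{subsec-A1}, applied with $(k_1,\dots,k_l)$ replaced throughout by $(\tilde{k_1},\dots,\tilde{k_l})$, shows that $\phi$ induces a diffeomorphism from $(S^1)^{l-1}$ onto $T^{(0)}_{\tilde{k_1},\dots,\tilde{k_l}}$. Next, I would check that each $\psi_j$ is a global diffeomorphism of $(S^1)^l$ (its inverse is the analogous translation by $-\tfrac{2j\pi}{d\tilde{k_1}}$ in the first coordinate), and that it sends $T^{(0)}_{\tilde{k_1},\dots,\tilde{k_l}}$ bijectively onto $T^{(j)}_{\tilde{k_1},\dots,\tilde{k_l}}$: this is a one-line calculation, because $\psi_j$ shifts the quantity $\tilde{k_1}\theta_1+\cdots+\tilde{k_l}\theta_l$ by exactly $\tfrac{2j\pi}{d}$. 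Composing, $\psi_j\circ\phi$ therefore gives a diffeomorphism from $(S^1)^{l-1}$ onto $T^{(j)}_{\tilde{k_1},\dots,\tilde{k_l}}$ for every $0\le j\le d-1$.

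To finish, I would verify that the $d$ level sets $T^{(0)}_{\tilde{k_1},\dots,\tilde{k_l}},\dots,T^{(d-1)}_{\tilde{k_1},\dots,\tilde{k_l}}$ are pairwise disjoint, which is immediate: if $0\le j,j'\le d-1$ and a common point exists, then $\tfrac{2(j-j')\pi}{d}\in 2\pi\ZZ$, forcing $j=j'$. Together with the decomposition $T_{k_1,\dots,k_l}=\bigsqcup_{j=0}^{d-1}T^{(j)}_{\tilde{k_1},\dots,\tilde{k_l}}$ recorded just before the lemma, this lets us glue the $d$ individual diffeomorphisms $\psi_j\circ\phi$ into the desired global diffeomorphism $\Phi\colon\bigsqcup_{d}(S^1)^{l-1}\simto T_{k_1,\dots,k_l}$. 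I do not foresee any real obstacle; the statement is essentially book-keeping on top of the auxiliary lemma in Subsection~\ref{subsec-A1}, which is consistent with the authors' remark that the lemma is clear.
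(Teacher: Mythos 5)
Your argument is correct and fills in exactly what the authors leave implicit when they say the lemma is clear: apply the auxiliary lemma of Subsection~\ref{subsec-A1} to $(\tilde{k_1},\dots,\tilde{k_l})$ (which have $\gcd=1$), observe that each $\psi_j$ is a global diffeomorphism of $(S^1)^l$ shifting $\tilde{k_1}\theta_1+\cdots+\tilde{k_l}\theta_l$ by $\tfrac{2j\pi}{d}$ and hence carrying $T^{(0)}_{\tilde{k_1},\dots,\tilde{k_l}}$ onto $T^{(j)}_{\tilde{k_1},\dots,\tilde{k_l}}$, and glue over the disjoint decomposition $T_{k_1,\dots,k_l}=\bigsqcup_{j=0}^{d-1}T^{(j)}_{\tilde{k_1},\dots,\tilde{k_l}}$. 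This is precisely the intended argument, with no gaps.
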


Similarly to Lemma \ref{lem-d1rank}, we have the following lemma.
\begin{lemma}\label{lem-drank}
As in Lemma \ref{lem-d1rank}, for $j\in\ZZ$ we define the morphism  $H^j\Phi^\ast\colon 
H_\dR^j((S^1)^l)\longrightarrow H_\dR^j(\displaystyle\bigsqcup_d(S^1)^{l-1})$ 
induced by
$\Phi\colon \displaystyle\bigsqcup_d(S^1)^{l-1}\longrightarrow (S^1)^l$.
Then we have
\begin{equation}
\rank H^j\Phi^\ast=
\begin{dcases}
\binom{l-1}{j} & (0\leq j\leq l-1) \\
\\
\quad \,\, 0 & \mathrm{(otherwise)}.
\end{dcases}
\end{equation} 
\end{lemma}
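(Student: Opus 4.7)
The plan is to reduce Lemma \ref{lem-drank} directly to the coprime case already handled in Lemma \ref{lem-d1rank}, using the fact that each $\psi_j$ is a translation of the torus $(S^1)^l$ and hence acts trivially on de Rham cohomology.

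First I would unpack what $\Phi^{\ast}$ looks like. By construction, the restriction of $\Phi$ to the $j$-th copy of $(S^1)^{l-1}$ (for $0\leq j\leq d-1$) is exactly $\psi_j\circ\phi$, where $\phi$ is the map constructed in Section \ref{subsec-A1} applied to the coprime tuple $(\tl{k_1},\dots,\tl{k_l})$. Under the canonical identification
\begin{equation}
H_\dR^j\Bigl(\bigsqcup_{d}(S^1)^{l-1}\Bigr)\simeq\bigoplus_{i=0}^{d-1}H_\dR^j((S^1)^{l-1}),
\end{equation}
the morphism $H^j\Phi^{\ast}$ sends $\alpha\in H_\dR^j((S^1)^l)$ to the tuple $\bigl((\psi_0\circ\phi)^{\ast}\alpha,\,(\psi_1\circ\phi)^{\ast}\alpha,\,\dots,\,(\psi_{d-1}\circ\phi)^{\ast}\alpha\bigr)$.

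Second, I would observe that each $\psi_j$ is the rigid translation of the torus $(S^1)^l$ by a fixed element (shifting only the first coordinate by $\tfrac{2j\pi}{d\tl{k_1}}$), and is therefore smoothly isotopic to the identity via the family of translations $\psi_{sj}$, $s\in[0,1]$. By the homotopy invariance of de Rham cohomology, $\psi_j^{\ast}=\id$ on $H_\dR^{\ast}((S^1)^l)$, so $(\psi_j\circ\phi)^{\ast}=\phi^{\ast}\circ\psi_j^{\ast}=\phi^{\ast}$ for every $j$. Consequently $H^j\Phi^{\ast}$ is nothing but the diagonal map
\begin{equation}
\alpha\longmapsto(\phi^{\ast}\alpha,\,\phi^{\ast}\alpha,\,\dots,\,\phi^{\ast}\alpha),
\end{equation}
whose image is a diagonally embedded copy of $\Ima H^j\phi^{\ast}$ inside $\bigoplus_{i=0}^{d-1}H_\dR^j((S^1)^{l-1})$.

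Third, this identification immediately gives $\rank H^j\Phi^{\ast}=\rank H^j\phi^{\ast}$, and Lemma \ref{lem-d1rank} (applied to the coprime tuple $(\tl{k_1},\dots,\tl{k_l})$) finishes the argument with the desired values $\binom{l-1}{j}$ in the range $0\leq j\leq l-1$ and $0$ otherwise. There is no real obstacle here: the delicate content was packaged into Lemma \ref{lem-d1rank}, and the only thing to check is the homotopy triviality of torus translations, which is standard. If one prefers a purely cohomological verification avoiding homotopy invariance, one can instead remark that $\psi_j^{\ast}(d\theta_i)=d\theta_i$ for all $i$ (the differentials are translation-invariant), which gives $\psi_j^{\ast}=\id$ on the K\"unneth generators and hence on all of $H_\dR^{\ast}((S^1)^l)$.
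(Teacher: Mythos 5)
Your proof is correct, and it is essentially what the paper intends by its one-line remark ``Similarly to Lemma \ref{lem-d1rank}.'' The key point you supply explicitly — that each $\psi_j$ is a torus translation and therefore acts as the identity on $H_\dR^\ast((S^1)^l)$ (either by homotopy invariance or because $\psi_j^\ast(d\theta_i)=d\theta_i$) — is exactly what makes the reduction to the coprime case work, and it is worth stating. Your observation that $H^j\Phi^\ast$ therefore factors as the composition of $H^j\phi^\ast$ with the diagonal embedding of $H_\dR^j((S^1)^{l-1})$ into $\bigoplus_{i=0}^{d-1} H_\dR^j((S^1)^{l-1})$ is also the cleaner way to handle the disjoint union: it avoids the temptation to naively transport the K\"unneth computation from a single torus to $\bigsqcup_d(S^1)^{l-1}$, where $H_\dR^j$ is no longer an exterior power of $H_\dR^1$. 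After that reduction, the rank equals $\rank H^j\phi^\ast$, and Lemma \ref{lem-d1rank} applied to the coprime tuple $(\tl{k_1},\dots,\tl{k_l})$ gives the stated binomial coefficients. No gaps.
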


Finally, we compute the cohomology groups $H^\ast((S^1)^l; \CC_{W_{k_1,\dots,k_l}})$ as follows.
\begin{proposition}\label{prop-appA}
We have isomorphisms
\begin{equation}
H^j((S^1)^l;\CC_{W_{k_1,\dots,k_l}})\simeq
\begin{dcases}
\CC^{d\cdot \binom{l-1}{j-1}} & (1\leq j\leq l) \\
\\
\quad \, 0 & \mathrm{(otherwise)}.
\end{dcases}
\end{equation}
\end{proposition}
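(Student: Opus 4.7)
The plan is to exploit the short exact sequence
\begin{equation*}
0\longrightarrow\CC_{W_{k_1,\dots,k_l}}\longrightarrow\CC_{(S^1)^l}\longrightarrow\CC_{T_{k_1,\dots,k_l}}\longrightarrow 0
\end{equation*}
coming from the open-closed decomposition $(S^1)^l=W_{k_1,\dots,k_l}\sqcup T_{k_1,\dots,k_l}$, and read off the cohomology of $\CC_{W_{k_1,\dots,k_l}}$ from the associated long exact sequence, using Lemmas \ref{lem-diffeo} and \ref{lem-drank} to pin down the connecting maps.

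First I would identify all the relevant cohomology groups. By K\"unneth, $H^j((S^1)^l;\CC)\simeq\CC^{\binom{l}{j}}$ for $0\le j\le l$ and vanishes otherwise. By Lemma \ref{lem-diffeo} the closed subset $T=T_{k_1,\dots,k_l}$ is diffeomorphic to $\bigsqcup_{d}(S^1)^{l-1}$, so $H^j((S^1)^l;\CC_T)\simeq H^j(T;\CC)\simeq\CC^{d\binom{l-1}{j}}$ for $0\le j\le l-1$ and vanishes otherwise. The restriction morphism $H^j((S^1)^l;\CC)\longrightarrow H^j((S^1)^l;\CC_T)$ coincides with the pullback $H^j\Phi^\ast$, whose rank is computed in Lemma \ref{lem-drank} to be $\binom{l-1}{j}$ for $0\le j\le l-1$.

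Next I would extract the required dimensions from the long exact sequence
\begin{equation*}
\cdots\longrightarrow H^{j-1}((S^1)^l;\CC_T)\longrightarrow H^j((S^1)^l;\CC_{W_{k_1,\dots,k_l}})\longrightarrow H^j((S^1)^l;\CC)\xrightarrow{H^j\Phi^\ast} H^j((S^1)^l;\CC_T)\longrightarrow\cdots
\end{equation*}
which, by the Euler characteristic count across each short piece, yields
\begin{equation*}
\dim H^j((S^1)^l;\CC_{W_{k_1,\dots,k_l}})=\dim\coker H^{j-1}\Phi^\ast+\dim\ker H^j\Phi^\ast.
\end{equation*}
For $1\le j\le l-1$, Lemma \ref{lem-drank} and the binomial identity $\binom{l}{j}-\binom{l-1}{j}=\binom{l-1}{j-1}$ give $\dim\ker H^j\Phi^\ast=\binom{l-1}{j-1}$ and $\dim\coker H^{j-1}\Phi^\ast=(d-1)\binom{l-1}{j-1}$, whose sum is exactly $d\binom{l-1}{j-1}$. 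For $j=l$ we use that $H^l\Phi^\ast$ maps $\CC\to 0$, hence has kernel of dimension $1$, while $\dim\coker H^{l-1}\Phi^\ast=d-1$, giving $d=d\binom{l-1}{l-1}$. For $j=0$, the map $H^0\Phi^\ast\colon\CC\longrightarrow\CC^d$ is the diagonal embedding and therefore injective, so $H^0((S^1)^l;\CC_{W_{k_1,\dots,k_l}})=0$. All remaining degrees vanish for dimensional reasons.

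The only nontrivial input is the rank of $H^j\Phi^\ast$, which was already handled by Lemma \ref{lem-drank} via the K\"unneth identification of the exterior algebra on $H^1\Phi^\ast$. Since that matrix computation is already done, the expected obstacle in this final step is merely bookkeeping: one must make sure the splitting of the long exact sequence into short pieces is correct at the two endpoints ($j=0$ and $j=l$), which is exactly where the identity $\binom{l}{j}=\binom{l-1}{j}+\binom{l-1}{j-1}$ must be used with the convention $\binom{l-1}{l}=0$. Once this is checked, the announced formula follows immediately.
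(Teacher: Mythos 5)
Your proof is correct and uses essentially the same strategy as the paper: the short exact sequence $0\to\CC_{W}\to\CC_{(S^1)^l}\to\CC_{T}\to 0$, the identifications from Lemmas \ref{lem-diffeo} and \ref{lem-drank}, and the resulting long exact sequence. In fact your displayed formula
$\dim H^j((S^1)^l;\CC_{W})=\dim\coker H^{j-1}\Phi^\ast+\dim\ker H^j\Phi^\ast$
is the correct form; the paper's equation \eqref{eq-AH1} contains a small misprint (it writes $\dim H^j((S^1)^l;\CC_T)$ where $\dim H^{j-1}((S^1)^l;\CC_T)$ is meant, which one can see is needed to make the final numbers match the statement), so your version fixes that. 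The boundary checks at $j=0$ (injectivity of $H^0\Phi^\ast$, the diagonal into $\CC^d$) and $j=l$ (where $H^l(T)=0$) are exactly the right points to verify and you handle them correctly.
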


\begin{proof}
Recall that $W_{k_1,\dots,k_l}=(S^1)^l\setminus T_{k_1,\dots,k_l}$.
We have an exact sequence
\begin{equation}
0\longrightarrow
\CC_{W_{k_1,\dots,k_l}} \longrightarrow
\CC_{(S^1)^l} \longrightarrow
\CC_{T_{k_1,\dots,k_l}} \longrightarrow
0.
\end{equation}
For $j\in\ZZ$, let $\alpha_j\colon H^j((S^1)^l;\CC_{(S^1)^l})
\longrightarrow H^j((S^1)^l;\CC_{T_{k_1,\dots,k_l}})$ be the linear map 
induced by the morphism $\CC_{(S^1)^l} \longrightarrow\CC_{T_{k_1,\dots,k_l}}$.
Then for each $j\in\ZZ$ we have
\begin{equation}\label{eq-AH1}
\begin{split}
&\dim H^j((S^1)^l;\CC_{W_{k_1,\dots,k_l}}) \\
&=\dim H^j((S^1)^l;\CC_{(S^1)^l}) -\rank\alpha_j
+\dim H^j((S^1)^l;\CC_{T_{k_1,\dots,k_l}}) -\rank\alpha_{j-1}.
\end{split}
\end{equation}
From Lemma \ref{lem-diffeo}, there is a commutative diagram
\begin{equation}
\vcenter{
\xymatrix@M=5pt{
\displaystyle\bigsqcup_d(S^1)^{l-1} \ar[d]^-[@!-90]{\sim} \ar@{^{(}->}[r]^-{\Phi} & 
(S^1)^l & \\
T_{k_1,\dots,k_l}. \ar@{^{(}->}[ur] & &  
}}
\end{equation}
Thus it follows from the de Rham Theorem that $\rank\alpha_j =\rank H^j\Phi^\ast$ 
for each $j\in\ZZ$.
By Lemma \ref{lem-drank}, we obtain
\begin{equation}\label{eq-AH2}
\rank\alpha_j=
\begin{dcases}
\binom{l-1}{j} & (0\leq j\leq l-1) \\
\\
\quad \,\, 0 & \mathrm{(otherwise)}.
\end{dcases}
\end{equation}
On the other hand, since $T_{k_1,\dots,k_l}$ is diffeomorphic to 
$\displaystyle\bigsqcup_d(S^1)^{l-1}$, we have isomorphisms
\begin{equation}\label{eq-AH3}
H^j((S^1)^l;\CC_{T_{k_1,\dots,k_l}})\simeq
\begin{dcases}
\CC^{d\cdot \binom{l-1}{j}} & (0\leq j\leq l-1) \\
\\
\quad \, 0 & \mathrm{(otherwise)}.
\end{dcases}
\end{equation}   
We also have isomorphisms
\begin{equation}\label{eq-AH4}
H^j((S^1)^l;\CC_{(S^1)^l})\simeq
\begin{dcases}
\CC^{\binom{l}{j}} & (0\leq j\leq l) \\
\\
\quad \! \! 0 & \mathrm{(otherwise)}.
\end{dcases}
\end{equation}   
Now the assertion immediately follows from (\ref{eq-AH1}), (\ref{eq-AH2}), 
(\ref{eq-AH3}) and (\ref{eq-AH4}).
\end{proof}

\begin{bibdiv}
\begin{biblist}
        
\bib{Bjo93}{book}{
    author={Bj\"ork, Jan-Erik},
    title={Analytic ${\scr D}$-modules and applications},
    series={Mathematics and its Applications},
    volume={247},
    publisher={Kluwer Academic Publishers Group, Dordrecht},
    date={1993},
    pages={xiv+581},
}

\bib{BE04a}{article}{
   author={Bloch, Spencer},
   author={Esnault, H\'{e}l\`ene},
   title={Homology for irregular connections},
   journal={J. Th\'{e}or. Nombres Bordeaux},
   volume={16},
   date={2004},
   number={2},
   pages={357--371},
}

\bib{CG97}{book}{
   author={Chriss, Neil},
   author={Ginzburg, Victor},
   title={Representation theory and complex geometry},
   publisher={Birkh\"auser Boston, Inc., Boston, MA},
   date={1997},
   pages={x+495},
}

\bib{DK16}{article}{
   author={D'Agnolo, Andrea},
   author={Kashiwara, Masaki},
   title={Riemann-Hilbert correspondence for holonomic D-modules},
   journal={Publ. Math. Inst. Hautes \'{E}tudes Sci.},
   volume={123},
   date={2016},
   pages={69--197},
}

\bib{DK18}{article}{
   author={D'Agnolo, Andrea},
   author={Kashiwara, Masaki},
   title={A microlocal approach to the enhanced Fourier-Sato transform in
   dimension one},
   journal={Adv. Math.},
   volume={339},
   date={2018},
   pages={1--59},
}

\bib{Del70}{book}{
   author={Deligne, Pierre},
   title={\'Equations diff\'erentielles \`a{} points singuliers r\'eguliers},
   series={Lecture Notes in Mathematics},
   volume={Vol. 163},
   publisher={Springer-Verlag, Berlin-New York},
   date={1970},
   pages={iii+133},
}

\bib{Di04}{book}{
   author={Dimca, Alexandru},
   title={Sheaves in topology},
   series={Universitext},
   publisher={Springer-Verlag, Berlin},
   date={2004},
   pages={xvi+236},
}

\bib{ET15}{article}{
   author={Esterov, Alexander},
   author={Takeuchi, Kiyoshi},
   title={Confluent $A$-hypergeometric functions and rapid decay homology
   cycles},
   journal={Amer. J. Math.},
   volume={137},
   date={2015},
   number={2},
   pages={365--409},
}

\bib{FKT26}{arXiv}{
    title={Characteristic cycles 
    of real and complex constructible sheaves, revisited}, 
    author={Fernandes, Ren},
    author={Kudomi, Kazuki},
    author={Takeuchi, Kiyoshi},
    year={2026, in preparation.},
}

\bib{Ful97}{book}{
   author={Fulton, William},
   title={Young tableaux},
   series={London Mathematical Society Student Texts},
   volume={35},
   publisher={Cambridge University Press, Cambridge},
   date={1997},
   pages={x+260},
}

\bib{Gin86}{article}{
   author={Ginsburg, V.},
   title={Characteristic varieties and vanishing cycles},
   journal={Invent. Math.},
   volume={84},
   date={1986},
   number={2},
   pages={327--402},
}

\bib{GM88}{book}{
   author={Goresky, Mark},
   author={MacPherson, Robert},
   title={Stratified Morse theory},
   publisher={Springer-Verlag, Berlin},
   date={1988},
   pages={xiv+272},
}

\bib{Hi09}{article}{
   author={Hien, Marco},
   title={Periods for flat algebraic connections},
   journal={Invent. Math.},
   volume={178},
   date={2009},
   number={1},
   pages={1--22},
}

\bib{HTT08}{book}{
   author={Hotta, Ryoshi},
   author={Takeuchi, Kiyoshi},
   author={Tanisaki, Toshiyuki},
   title={$D$-modules, perverse sheaves, and representation theory},
   series={Progress in Mathematics},
   volume={236},
   edition={Japanese edition},
   publisher={Birkh\"{a}user Boston, Inc., Boston, MA},
   date={2008},
   pages={xii+407},
}

\bib{HT25}{article}{
   author={Hu, Haoyu},
   author={Teyssier, Jean-Baptiste},
   title={Cohomological boundedness for flat bundles on surfaces and
   applications},
   journal={Compos. Math.},
   volume={160},
   date={2025},
   number={12},
   pages={2775--2827},
}

\bib{IT20}{article}{
   author={Ito, Yohei},
   author={Takeuchi, Kiyoshi},
   title={On irregularities of Fourier transforms of regular holonomic
   $\Cal{D}$-modules},
   journal={Adv. Math.},
   volume={366},
   date={2020},
   pages={107093, 62},
}

\bib{Kas83}{book}{
   author={Kashiwara, Masaki},
   title={Systems of microdifferential equations},
   series={Progress in Mathematics},
   volume={34},
   note={Based on lecture notes by Teresa Monteiro Fernandes translated from
   the French;
   With an introduction by Jean-Luc Brylinski},
   publisher={Birkh\"{a}user Boston, Inc., Boston, MA},
   date={1983},
   pages={xv+159},
}

\bib{Kas03}{book}{
   author={Kashiwara, Masaki},
   title={$D$-modules and microlocal calculus},
   series={Translations of Mathematical Monographs},
   volume={217},
   note={Translated from the 2000 Japanese original by Mutsumi Saito;
   Iwanami Series in Modern Mathematics},
   publisher={American Mathematical Society, Providence, RI},
   date={2003},
   pages={xvi+254},
}

\bib{Kas16}{article}{
   author={Kashiwara, Masaki},
   title={Riemann-Hilbert correspondence for irregular holonomic
   $\scr{D}$-modules},
   journal={Jpn. J. Math.},
   volume={11},
   date={2016},
   number={1},
   pages={113--149},
}

\bib{KK81}{article}{
   author={Kashiwara, Masaki},
   author={Kawai, Takahiro},
   title={On holonomic systems of microdifferential equations. III. Systems
   with regular singularities},
   journal={Publ. Res. Inst. Math. Sci.},
   volume={17},
   date={1981},
   number={3},
   pages={813--979},
}

\bib{KS85}{article}{
   author={Kashiwara, Masaki},
   author={Schapira, Pierre},
   title={Microlocal study of sheaves},
   note={Corrections to this article can be found in Ast\'erisque No.\ 130,
   p.\ 209},
   language={English, with French summary},
   journal={Ast\'erisque},
   number={128},
   date={1985},
   pages={235},
}

\bib{KS90}{book}{
   author={Kashiwara, Masaki},
   author={Schapira, Pierre},
   title={Sheaves on manifolds},
   series={Grundlehren der mathematischen Wissenschaften},
   volume={292},
   note={With a chapter in French by Christian Houzel},
   publisher={Springer-Verlag, Berlin},
   date={1990},
   pages={x+512},
}

\bib{KS01}{article}{
   author={Kashiwara, Masaki},
   author={Schapira, Pierre},
   title={Ind-sheaves},
   journal={Ast\'{e}risque},
   number={271},
   date={2001},
   pages={136},
}

\bib{KS03}{article}{
   author={Kashiwara, Masaki},
   author={Schapira, Pierre},
   title={Microlocal study of ind-sheaves. I. Micro-support and regularity},
   language={English, with English and French summaries},
   note={Autour de l'analyse microlocale},
   journal={Ast\'erisque},
   number={284},
   date={2003},
   pages={143--164},
}

\bib{KS06}{book}{
   author={Kashiwara, Masaki},
   author={Schapira, Pierre},
   title={Categories and sheaves},
   series={Grundlehren der mathematischen Wissenschaften},
   volume={332},
   publisher={Springer-Verlag, Berlin},
   date={2006},
   pages={x+497},
}

\bib{KS16}{book}{
   author={Kashiwara, Masaki},
   author={Schapira, Pierre},
   title={Regular and irregular holonomic D-modules},
   series={London Mathematical Society Lecture Note Series},
   volume={433},
   publisher={Cambridge University Press, Cambridge},
   date={2016},
   pages={vi+111},
}

\bib{Ked10}{article}{
   author={Kedlaya, Kiran S.},
   title={Good formal structures for flat meromorphic connections, I:
   surfaces},
   journal={Duke Math. J.},
   volume={154},
   date={2010},
   number={2},
   pages={343--418},
}

\bib{Ked11}{article}{
   author={Kedlaya, Kiran S.},
   title={Good formal structures for flat meromorphic connections, II:
   excellent schemes},
   journal={J. Amer. Math. Soc.},
   volume={24},
   date={2011},
   number={1},
   pages={183--229},
}

\bib{KT23}{arXiv}{
    title={A Morse theoretical approach to Fourier transforms
    of holonomic D-modules in dimension one
    }, 
    author={Kudomi, Kazuki},
    author={Takeuchi, Kiyoshi},
    year={2023},
    eprint={2311.17395.},
}

\bib{KT24}{arXiv}{
    title={On the monodromies at infinity of Fourier transforms 
    of holonomic D-modules 
    }, 
    author={Kudomi, Kazuki},
    author={Takeuchi, Kiyoshi},
    year={2024},
    eprint={2409.00423.},
}

\bib{Mal91}{book}{
   author={Malgrange, Bernard},
   title={\'{E}quations diff\'{e}rentielles \`a coefficients polynomiaux},
   series={Progress in Mathematics},
   volume={96},
   publisher={Birkh\"{a}user Boston, Inc., Boston, MA},
   date={1991},
   pages={vi+232},
}

\bib{MT11}{article}{
   author={Matsui, Yutaka},
   author={Takeuchi, Kiyoshi},
   title={Monodromy zeta functions at infinity, Newton polyhedra and
   constructible sheaves},
   journal={Math. Z.},
   volume={268},
   date={2011},
   number={1-2},
   pages={409--439},
}

\bib{Moc11}{article}{
   author={Mochizuki, Takuro},
   title={Wild harmonic bundles and wild pure twistor $D$-modules},
   language={English, with English and French summaries},
   journal={Ast\'erisque},
   number={340},
   date={2011},
   pages={x+607},
}

\bib{Moc22}{article}{
   author={Mochizuki, Takuro},
   title={Curve test for enhanced ind-sheaves and holonomic $D$-modules, I},
   journal={Ann. Sci. \'{E}c. Norm. Sup\'{e}r. (4)},
   volume={55},
   date={2022},
   number={3},
   pages={575--679},
}

\bib{Sab93}{article}{
   author={Sabbah, Claude},
   title={Introduction to algebraic theory of linear systems of differential
   equations},
   conference={
      title={\'{E}l\'{e}ments de la th\'{e}orie des syst\`emes
      diff\'{e}rentiels. $\scr D$-modules coh\'{e}rents et holonomes},
      address={Nice},
      date={1990},
   },
   book={
      series={Travaux en Cours},
      volume={45},
      publisher={Hermann, Paris},
   },
   date={1993},
   pages={1--80},
}

\bib{Sab13}{book}{
   author={Sabbah, Claude},
   title={Introduction to Stokes structures},
   series={Lecture Notes in Mathematics},
   volume={2060},
   publisher={Springer, Heidelberg},
   date={2013},
   pages={xiv+249},
}

\bib{Sab17}{article}{
   author={Sabbah, Claude},
   title={A remark on the irregularity complex},
   journal={J. Singul.},
   volume={16},
   date={2017},
   pages={101--114},
}

\bib{SV96}{article}{
   author={Schmid, Wilfried},
   author={Vilonen, Kari},
   title={Characteristic cycles of constructible sheaves},
   journal={Invent. Math.},
   volume={124},
   date={1996},
   number={1-3},
   pages={451--502},
}

\bib{Tak22}{arXiv}{
    title={Fourier transforms of irregular holonomic D-modules, 
singularities at infinity of meromorphic functions and irregular characteristic cycles}, 
    author={Takeuchi, Kiyoshi},
    year={2022},
    eprint={2211.04113.}
}

\bib{Tak22}{arXiv}{
    title={Geometric monodromies, mixed Hodge numbers of 
    motivic Milnor fibers and Newton polyhedra
    }, 
    author={Takeuchi, Kiyoshi},
    year={2023},
    eprint={2308.09418., to appear in the Volume VII of 
    the Handbook of Geometry and Topology of Singularities.
    }
}

\end{biblist}
\end{bibdiv}

\end{document}